\documentclass[11pt,a4paper,leqno]{amsart}

\usepackage{amsmath,amsthm,amssymb,amsfonts,mathrsfs}
\usepackage{enumitem}
\usepackage{txfonts}
\usepackage{hyperref}

\hypersetup{
  colorlinks=true,
  linkcolor=blue,
  citecolor=blue,
  urlcolor=blue,
  pdftitle={Higher regularity of regular free boundaries for the Kolmogorov obstacle problem},
  pdfauthor={Kaj Nystrom}
}

\calclayout
\allowdisplaybreaks

\newtheorem{theorem}{Theorem}[section]
\newtheorem{proposition}{Proposition}[section]
\newtheorem{lemma}{Lemma}[section]

\theoremstyle{definition}
\newtheorem{definition}{Definition}[section]
\newtheorem{remark}{Remark}[section]

\numberwithin{equation}{section}

\newcommand{\K}{\mathcal K}

\newcommand{\R}{\mathbb R}
\newcommand{\eps}{\varepsilon}
\newcommand{\dK}{d_{\mathcal K}}

\newcommand{\osc}{\operatorname{osc}}

\newcommand{\md}{\operatorname{m.d.}}
\renewcommand{\d}{\,\mathrm d}

\begin{document}

\title[Regular free boundaries for the Kolmogorov obstacle problem]
{Higher regularity of regular free boundaries\\ for the
Kolmogorov obstacle problem}

\author{Kaj Nystr\"om}
\address{Department of Mathematics, Uppsala University, Box 480\\
SE-751 06 Uppsala, Sweden}
\email{kaj.nystrom@math.uu.se}
\date{}
\subjclass[2020]{Primary 35R35, 35H20; Secondary 35B65, 35K65}
\keywords{Kolmogorov obstacle problem, regular free boundary, boundary
Harnack inequality, intrinsic regularity, hypoelliptic equation.}

\begin{abstract}
We establish higher regularity of the regular free boundary for
bounded, nonnegative solutions of the Kolmogorov obstacle problem
\[
 \K u=\chi_{\{u>0\}},
 \qquad
 \K=\Delta_x+x\cdot\nabla_y-\partial_t.
\]
Assume that the contact set satisfies a quantitative thickness condition at
a free-boundary point.  We then prove that, in a neighborhood of that point, the
free boundary is a non-characteristic \(C_{\K}^{1,\beta}\) hypersurface for
some \(\beta\in(0,1)\).  Equivalently, its inward non-degenerate normal is
H\"older continuous with respect to the intrinsic Kolmogorov distance, and
the deviation of the graph from its tangent hyperplane is of order
\(O(r^{1+\beta})\) at intrinsic scale \(r\).

The proof combines the half-space blow-up and differentiability theory at
regular points with boundary Harnack estimates in asymptotically
cylindrical intrinsic Lipschitz domains.  A principal difficulty is that
the diffusion derivatives determining the normal do not satisfy the
homogeneous Kolmogorov equation.  We therefore introduce a
\(\K\)-harmonic-replacement argument that converts the commutator sources
into an \(O(r)\) error under intrinsic blow-up and yields a contraction
estimate for the oscillations of derivative quotients.  This gives
H\"older continuity of the non-degenerate normal.  In addition, a boundary
decay estimate for the transport derivative
\(Yu=(x\cdot\nabla_y-\partial_t)u\) provides the required improvement in
the coupled time-transport direction.
Combining these estimates with the available control in the degree-three
variables yields a full intrinsic improvement of flatness.
\end{abstract}

\maketitle
\enlargethispage{6pt}

\section{Introduction and main results}
\label{sec:introduction}

We consider the Kolmogorov operator
\begin{equation}
\label{eq:operator}
 \K=\Delta_x+Y,
 \qquad
 Y=x\cdot\nabla_y-\partial_t,
 \qquad
 (x,y,t)\in\R^m\times\R^m\times\R.
\end{equation}
Diffusion acts only in the \(x\)-variables, but the commutator relations
\begin{equation}
\label{eq:hormander-commutators}
 [\partial_{x_i},Y]=\partial_{y_i},
 \qquad i=1,\ldots,m,
\end{equation}
generate the missing \(y\)-directions and make \(\K\) hypoelliptic.  We refer
to \(Y\) as the Kolmogorov transport field and to \(Yu\) as the transport
derivative of \(u\).  The operator \(\K\) is the prototype of the kinetic
diffusion introduced by Kolmogorov \cite{Kolmogorov1934} and a canonical
example in H\"ormander's theory \cite{Hormander1967}.

The geometry associated with \(\K\) is non-Euclidean.  Its natural
translations are determined by the noncommutative group law
\begin{equation}
\label{eq:group-law-intro}
 (\widetilde x,\widetilde y,\widetilde t)\circ(x,y,t)
 =
 \bigl(\widetilde x+x,\,
       \widetilde y+y-t\widetilde x,\,
       \widetilde t+t\bigr),
\end{equation}
rather than by ordinary vector addition.  The mixed term
\(-t\widetilde x\) records the displacement in \(y\) produced by transport
and is essential for the left invariance of \(\K\).  The group law is
compatible with the anisotropic dilations
\begin{equation}
\label{eq:dilations-intro}
 \delta_r(x,y,t)=(rx,r^3y,r^2t),
\end{equation}
which assign homogeneous degrees one, three, and two to \(x\), \(y\), and
\(t\), respectively.  Regularity is therefore naturally formulated using
the associated quasi-distance and intrinsic function spaces rather than
their Euclidean counterparts.

The relevance of \(\K\) extends beyond its role as a model hypoelliptic
operator.  Up to the choice of time orientation, it is the space-time
generator of integrated Brownian motion, or the random-acceleration process,
in which \(x\) is a diffusing velocity and \(y\) is the corresponding
position.  It is also the principal part of kinetic Fokker-Planck and
Langevin equations arising in kinetic theory and nonequilibrium statistical
mechanics.  Closely related Kolmogorov operators occur as generators of
degenerate stochastic systems in probability, stochastic control, and
filtering.  In mathematical finance, adjoining the running average of an
asset price as an additional state variable leads naturally to Kolmogorov
equations, while optimal stopping for American Asian options gives rise to
the obstacle problem considered below, see, for example,
\cite{DiFrancescoPascucciPolidoro2008,
FrentzNystromPascucciPolidoro2010}.  Across these applications, diffusion
acts only in the velocity variables and is transmitted to the remaining
variables by transport, making the intrinsic boundary and free-boundary
theory of \(\K\) a fundamental analytical issue.

We use the backward Kolmogorov cylinders
\begin{equation}
\label{eq:backward-cylinders}
 Q_r
 =
 B_r^x\times B_{r^3}^y\times(-r^2,0],
 \qquad
 Q_r(P)=P\circ Q_r,
\end{equation}
where \(B_\rho^x\) and \(B_\rho^y\) denote Euclidean balls in the indicated
variables.  Thus \(Q_r(P)\) is obtained from \(Q_r\) by the left translation
in \eqref{eq:group-law-intro}.  Indeed, if \(P=(x_0,y_0,t_0)\), its points
have the form \((x_0+x,y_0+y-tx_0,t_0+t)\), rather than \(P+(x,y,t)\).
Symmetric boxes adapted to boundary
comparison will be denoted by \(Q_{M,r}(P)\) and are introduced in
Section~\ref{sec:geometry}.

In this paper we study bounded, nonnegative distributional solutions
\(u\) of the normalized obstacle problem
\begin{equation}
\label{eq:obstacle}
 \K u=\chi_{\{u>0\}}
 \quad\text{in }Q_1.
\end{equation}
Throughout, we use the continuous representative of \(u\) supplied by
the interior regularity theory.  The regularity results recalled in
Section~\ref{sec:inputs} also provide the classical interpretation of the
equation used in the positivity set.  We write
\begin{equation}
\label{eq:phases}
 \Omega_u=\{u>0\},
 \qquad
 \Lambda_u=\{u=0\},
 \qquad
 \Gamma_u=\partial\Omega_u
\end{equation}
for the positivity set, contact set, and free boundary, respectively.
Here \(\partial\Omega_u\) is taken relative to the open ambient cylinder
\(Q_1^\circ=B_1^x\times B_1^y\times(-1,0)\).  All regular neighborhoods
considered below are compactly contained in \(Q_1^\circ\).

The normalized formulation \eqref{eq:obstacle} arises naturally from an
obstacle problem with a smooth obstacle.  Indeed, let \(v\) solve the
corresponding obstacle problem with obstacle \(\psi\), so that
\[
 v\geq\psi,
 \qquad
 \K v=0
 \quad\text{in }\{v>\psi\}.
\]
After subtracting the obstacle and setting \(w=v-\psi\), the zero-obstacle
formulation becomes
\[
 w\geq0,
 \qquad
 \K w=f\,\chi_{\{w>0\}},
 \qquad
 f=-\K\psi.
\]
Thus \eqref{eq:obstacle} is the constant-source case \(f\equiv1\).  More
generally, if \(P_0\) is a free-boundary point at which \(f(P_0)>0\), then
normalizing \(w\) by \(f(P_0)\) and performing an intrinsic blow-up produce
a right-hand side converging to \(1\).  The results of the present paper
are stated for the exact constant-source equation
\eqref{eq:obstacle}.

The obstacle problem for \eqref{eq:operator} was introduced in connection
with American Asian options and has been studied in
\cite{DiFrancescoPascucciPolidoro2008,
FrentzNystromPascucciPolidoro2010,NystromPascucciPolidoro2010}.
For general background on obstacle-type free boundaries, we refer to
\cite{PetrosyanShahgholianUraltseva2012}.  Frentz, Nystr{\"o}m, Pascucci,
and Polidoro
\cite{FrentzNystromPascucciPolidoro2010} established an optimal growth
theorem at free-boundary points.  More precisely, if
\(P\in\Gamma_u\) and \(Q_{2r}(P)\) is compactly contained in the domain of
the obstacle problem, then
\begin{equation}
\label{eq:optimal-quadratic-growth}
\sup_{Q_r(P)}u\leq Cr^2.
\end{equation}
The exponent two is optimal with respect to the underlying Kolmogorov
dilations \(\delta_r(x,y,t)\), under which solutions are
naturally rescaled according to
\(u_{P,r}(Z)=r^{-2}u(P\circ\delta_r Z)\).  This optimal quadratic growth,
together with the corresponding interior estimates, yields
\begin{equation}
\label{eq:known-solution-regularity}
D_x^2u,\ Yu\in \mathrm L^\infty_{\mathrm{loc}}.
\end{equation}
Theorem~1.1 of \cite{Bowman2025} subsequently supplied the new and decisive
estimate
\begin{equation}
\label{eq:enhanced-y-regularity}
 \nabla_yu\in \mathrm L^\infty_{\mathrm{loc}}.
\end{equation}
Together with the bound for \(Yu\), this also gives
\(\partial_tu=x\cdot\nabla_yu-Yu\in\mathrm L^\infty_{\mathrm{loc}}\).

In \cite{Bowman2025}  the first regularity theorem for \(\Gamma_u\) was established.
Under a quantitative thickness assumption, the author proves that the free boundary is locally a
graph in a non-degenerate \(x\)-direction,
\begin{equation}
\label{eq:regular-free-boundary-graph}
 \Gamma_u=\{x_m=\psi(x',y,t)\},
 \qquad
 \Omega_u=\{x_m>\psi(x',y,t)\}.
\end{equation}
Moreover, \(\psi\in C^{0,1}_{x'}\cap C^{0,1/2}_{y,t}\), the graph has
two-sided intrinsic corkscrews, and it is differentiable with respect to the
Kolmogorov distance
\cite[Theorem~1.2 and Proposition~1.3]{Bowman2025}.  Blow-ups at regular
points are unique \cite[Proposition~6.11]{Bowman2025} and have the form
\begin{equation}
\label{eq:half-space-blowup}
 u_0(x,y,t)=\frac12(x\cdot e)_+^2,
 \qquad e\in\mathbb S^{m-1}.
\end{equation}

\subsection{Main result}

The purpose of this paper is to continue the regular free-boundary program
initiated in \cite{Bowman2025}.  Our main result establishes a power modulus
of continuity for the tangent hyperplane at regular points and thereby
upgrades the qualitative differentiability theorem
\cite[Proposition~1.3]{Bowman2025} to intrinsic
\(C_{\mathcal K}^{1,\beta}\) regularity.  The proof relies on the boundary
comparison theorem for nonnegative solutions recently established in
\cite{NystromAsymptoticallyCylindrical2026}.  The graph geometry in
\eqref{eq:regular-free-boundary-graph} falls within the asymptotically
cylindrical intrinsic Lipschitz geometry treated by that theorem.

Indeed, writing \(y=(y',y_m)\), the \(C^{0,1/2}_y\) estimate for the graph
function in \eqref{eq:regular-free-boundary-graph} gives
\begin{equation}
\label{eq:ym-half-holder-intro}
 \bigl|
 \psi(x',y',y_m,t)
 -
 \psi(x',y',\widetilde y_m,t)
 \bigr|
 \leq C|y_m-\widetilde y_m|^{1/2}.
\end{equation}
Since \(y_m\) has homogeneous degree three under the intrinsic dilations,
variations of intrinsic size \(r\) correspond to
\(|y_m-\widetilde y_m|\lesssim r^3\).  Consequently,
\begin{equation}
\label{eq:cylindrical-defect-intro}
 \sup_{\substack{0<|y_m-\widetilde y_m|\leq c r^3\\
                  (x',y',t)\ \mathrm{fixed}}}
 \frac{\bigl|
 \psi(x',y',y_m,t)
 -
 \psi(x',y',\widetilde y_m,t)
 \bigr|/r}{\bigl(|y_m-\widetilde y_m|/r^3\bigr)^{1/3}}
 \leq C r^{1/2}.
\end{equation}
Thus the scale-invariant \(C^{0,1/3}_{y_m}\) cylindrical deviation tends to zero
at the rate \(O(r^{1/2})\).  The regular free-boundary graph is therefore
asymptotically cylindrical in the sense of
\cite{NystromAsymptoticallyCylindrical2026}, and the constants in the
boundary comparison theorem are uniform at all sufficiently small scales.

We are now ready to state our main theorem.  The thickness function
\(\delta_r(u,P)\) is recalled in
Section~\ref{subsec:bowman-input}.  The constants in the statement depend
on the dimension, a local bound for \(u\), \(r_0\), \(\sigma_0\), and the fixed
regular neighborhood chosen in the proof.  Its localization data include
the intrinsic distance to the fixed boundary and a uniform modulus of
continuous vanishing of \(Yu\) on the regular free boundary.  This modulus
enters through the initial radius at which a fixed smallness threshold is
reached, as explained in Lemma~\ref{lem:uniform-reference-values}.
Here the original domain is the prescribed ambient cylinder \(Q_1\), and
its fixed boundary means \(\partial Q_1\), as distinguished from the unknown
free boundary \(\Gamma_u\).  Thus the theorem is an interior result.
Its estimates are uniform in the boundary point and at all sufficiently
small scales within the fixed regular neighborhood.  Uniform estimates
for a family of solutions require the preceding data, including the
boundary-vanishing modulus, to be controlled uniformly.  The quantity
\(\dK(P,\widetilde P)\) denotes the symmetrized left-invariant Kolmogorov
quasi-distance introduced in \eqref{eq:distance}, and non-characteristic
\(C_{\K}^{1,\beta}\) hypersurfaces are defined in
Definition~\ref{def:C1beta}.

\begin{theorem}
\label{thm:main}
Let \(u\) be a bounded, nonnegative distributional solution of
\eqref{eq:obstacle}, and let \(P_0\in\Gamma_u\cap Q_{1/2}^\circ\).
Assume that, for some \(r_0>0\) with
\(Q_{2r_0}(P_0)\Subset Q_1^\circ\),
\begin{equation}
\label{eq:thickness-main}
 \delta_{r_0}(u,P_0)\geq\sigma_0\geq\sigma(r_0)>0,
\end{equation}
where \(\sigma\) is the universal modulus in the regularity theorem of
\cite{Bowman2025}.
Then, after applying the same orthogonal rotation to the \(x\)- and
\(y\)-variables, there exist \(M\geq1\), \(r_*>0\),
\(\beta\in(0,1)\), and \(C<\infty\) such that the following hold.  For every
\(P\in\Gamma_u\cap Q_{M,r_*}(P_0)\) there is a unique unit vector
\(\nu(P)\in\mathbb S^{m-1}\), with \(\nu(P)\cdot\nu(P_0)\geq1/2\), for
which the blow-up of \(u\) at \(P\) is
\begin{equation}
\label{eq:main-half-space-blowup}
 \frac12(x\cdot\nu(P))_+^2.
\end{equation}
Moreover,
\begin{equation}
\label{eq:normal-holder-main}
 |\nu(P)-\nu(\widetilde P)|
 \leq C(\dK(P,\widetilde P))^\beta
\end{equation}
for \(P,\widetilde P\in\Gamma_u\cap Q_{M,r_*}(P_0)\), and
\begin{equation}
\label{eq:flatness-main}
 \bigl|\nu(P)\cdot(x_{\widetilde P}-x_P)\bigr|
 \leq C(\dK(P,\widetilde P))^{1+\beta}.
\end{equation}
Consequently \(\Gamma_u\cap Q_{M,r_*}(P_0)\) is a non-characteristic
\(C_{\K}^{1,\beta}\) hypersurface.
\end{theorem}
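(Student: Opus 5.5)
The plan follows the classical Caffarelli route for the parabolic obstacle problem, adapted to the intrinsic geometry. Throughout, the boundary comparison theorem of \cite{NystromAsymptoticallyCylindrical2026} replaces the Euclidean boundary Harnack principle.

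\emph{Step 1: normalization.} From the thickness hypothesis \eqref{eq:thickness-main} and \cite[Theorem~1.2, Propositions~1.3 and~6.11]{Bowman2025} we get the graph representation \eqref{eq:regular-free-boundary-graph} in a box \(Q_{M,r_1}(P_0)\). At every \(P\) in it the blow-up is unique and of the form \eqref{eq:half-space-blowup} with some \(e=\nu(P)\). By continuity of blow-ups and the Lipschitz bound in \(x'\), we may take \(\nu(P)\cdot\nu(P_0)\ge 1/2\). We rotate so that \(\nu(P_0)=e_m\). Since the blow-up converges at every regular point, we can fix \(r_1\) so that \(|u_{P,r}-\tfrac12(x\cdot\nu(P))_+^2|\le\eps_0\) on \(Q_2\) for all \(r\le r_1\) and all such \(P\). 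We also fix \(r_1\) so that \(|Yu|\le\eps_0\) near \(\Gamma_u\); this is the uniform reference scale of Lemma~\ref{lem:uniform-reference-values}.

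\emph{Step 2: cone of monotonicity.} In \(\Omega_u\) each derivative \(w_e=\partial_{x_e}u\) with \(e\cdot e_m\ge 1/2\) is nonnegative near \(\Gamma_u\), by a comparison argument against the half-space profile. However, \(\K w_e=-\partial_{y_e}u\ne0\) because of \eqref{eq:hormander-commutators}, so \(w_e\) is not \(\K\)-harmonic.

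\emph{Step 3: harmonic replacement and contraction.} At scale \(r\) around \(P\in\Gamma_u\), let \(h_e\) be the \(\K\)-harmonic function in \(\Omega_u\cap Q_{M,r}(P)\) with the same boundary data as \(w_e\). By the bound \eqref{eq:enhanced-y-regularity} on \(\nabla_yu\) and a barrier in the intrinsic Lipschitz domain, \(|w_e-h_e|\le Cr^2\). This is \(O(r)\) relative to the natural size \(w_e\sim r\), so the commutator source is exactly the \(O(r)\) error mentioned in the abstract. The domains are asymptotically cylindrical by \eqref{eq:cylindrical-defect-intro}. Hence the boundary comparison theorem gives Hölder continuity of \(h_{e}/h_{e_m}\) up to \(\Gamma_u\) with uniform constants. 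Combining these, the oscillation of the quotient \(w_e/w_{e_m}\) on \(Q_{M,\theta r}(P)\) is at most \(\lambda\) times its oscillation on \(Q_{M,r}(P)\), plus \(Cr\), with \(\lambda<1\). Iterating yields a decay \(Cr^{\beta}\). The blow-up identifies the boundary limit of \(w_e/w_{e_m}\) at \(P\) as \((e\cdot\nu(P))/(e_m\cdot\nu(P))\). Letting \(e\) range over \(e_1,\dots,e_m\) then gives \eqref{eq:normal-holder-main} for pairs \(P,\widetilde P\) that differ within a common box. For the time-transport direction we use a boundary decay estimate \(|Yu|\le Cr^{1+\beta}\) on \(Q_{M,r}(P)\), obtained by the same replacement argument applied to \(Yu\), which vanishes on \(\Gamma_u\). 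Together with the \(\nabla_y\) bound this controls the displacement of \(\Gamma_u\) along \(Y\).

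\emph{Step 4: flatness and conclusion.} From the Hölder normal and the transport decay we derive an improvement of flatness: \(\Gamma_u\cap Q_{M,r}(P)\) lies in the slab \(|x\cdot\nu(P)|\le Cr^{1+\beta}\) in the translated coordinates. In the degree-three variables \(y\) we use the \(C^{0,1/2}\) control, which gives \(r^{3/2}\le r^{1+\beta}\) provided \(\beta\le1/2\). Choosing \(r=\dK(P,\widetilde P)\) gives \eqref{eq:flatness-main}. Non-characteristicity follows from \(\nu(P)\in\mathbb S^{m-1}\) being an \(x\)-direction, and Definition~\ref{def:C1beta} is then satisfied.

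\emph{Main obstacle.} I expect Step 3 to be the hardest part. The replacement error must be shown to be genuinely relatively small compared with the lower bound for \(h_{e_m}\) on the same scale. This lower bound requires nondegeneracy of \(w_{e_m}\) at corkscrew points, which we obtain from the half-space approximation of Step 1.
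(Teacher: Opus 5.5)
\paragraph{Gap 1: the replacement error in Step~3 must be relative to \(w_{e_m}\).} Your absolute bound \(|w_e-h_e|\le Cr^2\) on \(\Omega_u\cap Q_{M,r}(P)\) does not give the recursion you state. The oscillation on \(Q_{M,\theta r}(P)\) is taken over points arbitrarily close to \(\Gamma_u\). There \(w_{e_m}\le C\,\dK(\cdot,\Gamma_u)\to0\), so the quotient error \(|w_e-h_e|/w_{e_m}\) is uncontrolled.

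A lower bound for \(w_{e_m}\) at corkscrew points, which is what your last paragraph proposes, only handles points at distance comparable to \(r\) from the boundary. What you need is the pointwise relative bound \(|w_e-h_e|\le Cr\,w_{e_m}\) all the way up to \(\Gamma_u\). A generic barrier with \(\K b\le-1\) in the Lipschitz domain does not give this. You would still need an inhomogeneous boundary comparison between \(b\) and \(w_{e_m}\), and that is exactly what is not available.

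The paper's device is to use the solution itself as the barrier. After rescaling, set \(B=Aq_0-u_{P,r}\) with \(q_0=\partial_{x_m}u_{P,r}\). Then \(\K B=A\K q_0-1\le-1/2\) and \(0\le B\le Aq_0\). Hence \(w-2\eps B\) and \(w+2\eps B\) are a sub- and a supersolution bounding the data of \(w\), and Perron comparison gives \(|w-w^h|\le 2A\eps q_0\) (Lemma~\ref{lem:harmonic-replacement}).

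The condition \(B\ge0\) needs the scale-uniform domination \(u_{P,r}\le C\,\partial_eu_{P,r}\). This is proved in Lemma~\ref{lem:uniform-reference-values} by a divergence-theorem argument on \((y,t)\)-slices, using \(\Delta_xu_{P,r}=1-Yu_{P,r}\ge1/2\). Only with this relative bound do the replaced quotients \(h_1/h_0\) stay in the old range \([\underline\lambda,\overline\lambda]\). Then boundary Harnack, applied to a balanced envelope, closes \(\omega(\theta r)\le\vartheta\omega(r)+Cr\).

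Two smaller points. The quotient argument needs nonnegative numerators, so you must use \(u_{x_m}\pm a_0u_{x_j}\) in a small subcone, not \(e_1,\dots,e_{m-1}\) directly. Also, the cone \(e\cdot e_m\ge1/2\) is too wide to be a monotonicity cone.

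\paragraph{Gap 2: the transport estimate is false as stated and needs a different method.} The rate \(|Yu|\le Cr^{1+\beta}\) on \(Q_{M,r}(P)\) cannot hold in general. Take the \(y\)-independent travelling wave \(u=c^{-2}(e^{cs}-1-cs)\) for \(s=x_m+ct>0\), and \(u=0\) otherwise. It solves \eqref{eq:obstacle} with a flat, thick free boundary. Yet \(Yu=-(e^{cs}-1)\), so \(\sup_{\Omega_u\cap Q_{M,r}(P)}|Yu|\) is comparable to \(|c|r\).

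The estimate also cannot come from ``the same replacement argument''. First, \(Yu\) changes sign, so there is no quotient structure. Second, its source \(\K(Yu)=2\operatorname{div}_x(\nabla_yu)\) is only the divergence of a bounded field, since no bound on \(\partial_{x_i}\partial_{y_i}u\) is available. So the \(L^\infty\)-source barrier does not apply.

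The paper handles this in Lemma~\ref{lem:divergence-perturbation}. It subtracts the fundamental-solution potential of the divergence of the cut-off field \(\chi\mathbf F_{P,r}\), which is \(O(r)\) after rescaling. It then removes the resulting boundary values with a Perron solution. Finally it applies boundary H\"older decay for bounded, sign-changing \(\K\)-harmonic functions vanishing on the graph. This gives \(a_P(\theta R)\le\vartheta a_P(R)+CR\), and hence only \(\sup|Yu|\le Cr^\gamma\) with \(\gamma<1\).

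Turning this into graph control also needs a mechanism you do not supply:
\begin{enumerate}
\item Integrate \(Yu\) along \(\Phi_s(P)\) to get \(u(\Phi_sP)\le Cr^{2+\gamma}\).
\item Use quadratic cone non-degeneracy to get a height error \(Cr^{1+\gamma/2}\).
\item When \(\Phi_s(P)\) lies in the contact set, step 2 gives nothing. You must flow back from the graph point \(\widetilde P_s\) and use the \(y_m\)-H\"older bound (Lemma~\ref{lem:flow-flatness}).
\end{enumerate}
This is why the final exponent is \(\min\{\beta_0,1/2,\gamma/2\}\).
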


The statement is invariant under the left translations and intrinsic
dilations associated with \(\K\).  In graph coordinates, after applying the
same orthogonal rotation to the \(x\)- and \(y\)-variables, the conclusion
takes the form
\begin{equation}
\label{eq:main-graph-form}
 \Gamma_u\cap Q_{M,r_*}(P_0)
 =
 \{x_m=\psi(x',y,t)\}\cap Q_{M,r_*}(P_0).
\end{equation}
Moreover, the intrinsic first-order differential of \(\psi\) is H\"older
continuous.  In particular, for every fixed \((y,t)\), the function
\(x'\longmapsto\psi(x',y,t)\) is \(C^{1,\beta}\), with estimates uniform
in \((y,t)\) on every smaller
coordinate patch.

The same local conclusion holds at every point of the regular set
\(\Gamma_u^{\mathrm{reg}}\) defined by the existence of a half-space blow-up.
Indeed, \cite[Corollary~5.8 and Proposition~6.11]{Bowman2025} give an open
regular neighborhood and uniqueness of the half-space blow-up.  The
localization argument in Remark~\ref{rem:symmetric-localization} then
supplies the common monotonicity cone, boundary vanishing of \(Yu\), and
uniform quadratic cone non-degeneracy needed for the proof.  The constants
and the admissible radius may depend on this chosen neighborhood.  The
thickness assumption is a convenient quantitative condition ensuring
regularity throughout a neighborhood of the distinguished point.

\subsection{Discussion of the proof}

The mechanism behind the conclusions of \cite{Bowman2025} is important for
the present paper.  Under intrinsic rescaling, the enhanced estimate for
\(\nabla_yu\) from \cite[Theorem~1.1]{Bowman2025} forces the
\(y\)-dependence to disappear in the blow-up limit.  Consequently, every
blow-up is independent of \(y\) and solves the classical parabolic obstacle
problem in \((x,t)\), as proved in
\cite[Proposition~5.1]{Bowman2025}.  The almost-Weiss monotonicity formula
\cite[Proposition~4.1]{Bowman2025} then implies that every blow-up is
parabolically homogeneous of degree two
\cite[Proposition~5.4]{Bowman2025}.  The classification theorem of
Caffarelli, Petrosyan, and Shahgholian
\cite{CaffarelliPetrosyanShahgholian2004}, applied in
\cite[Propositions~5.6-5.7]{Bowman2025}, leaves exactly two possibilities:
a half-space solution of the form \eqref{eq:half-space-blowup} or a quadratic
polynomial solution.  The balanced energy introduced in
\cite[Section~5]{Bowman2025} takes two corresponding values and thereby
distinguishes the branches.  Finally, the quantitative thickness condition
excludes the polynomial branch
not only at the distinguished point but throughout a smaller
free-boundary neighborhood \cite[Lemma~6.6]{Bowman2025}.  We give the details
of this reduction in Section~\ref{subsec:bowman-classification} in order to
identify precisely the information imported from that work.  These results
provide the starting point for the present paper, but they do not by
themselves yield a quantitative modulus of continuity for the tangent
hyperplane.  The remaining task is to upgrade the qualitative uniqueness and
differentiability of the regular free boundary to intrinsic
\(C_{\K}^{1,\beta}\) regularity.

From this perspective, our main theorem is the Kolmogorov analogue of the
completion of the regular free-boundary program for step-two Carnot groups
by Danielli, Garofalo, and Petrosyan
\cite{DanielliGarofaloPetrosyan2007}.  The two settings differ, however, at a
crucial structural point.  In the step-two Carnot-group
setting, the derivatives used to recover the intrinsic normal may be
represented by right-invariant vector fields.  These commute with the
left-invariant sub-Laplacian, and the corresponding derivative functions
satisfy a homogeneous equation and vanish on the regular free boundary.

For the Kolmogorov operator \eqref{eq:operator}, the left-invariant diffusion
fields \(\partial_{x_i}\), which determine the intrinsic normal, do not
commute with \(\K\).  More precisely,
\begin{equation}
\label{eq:commutator-intro}
 [\K,\partial_{x_i}]
 =-\partial_{y_i},
 \qquad
 \K(\partial_{x_i}u)
 =-\partial_{y_i}u
 \quad\text{in }\Omega_u,
 \qquad i=1,\ldots,m.
\end{equation}
Thus the relevant derivative functions satisfy inhomogeneous equations, and
the homogeneous boundary Harnack argument cannot be applied directly.

The key observation is that the inhomogeneity in
\eqref{eq:commutator-intro} is perturbative at a regular free-boundary
point.  For \(Z=(x,y,t)\), set
\begin{equation}
\label{eq:blowup-intro}
 u_{P,r}(Z)=r^{-2}{u(P\circ\delta_r Z)}.
\end{equation}
Left invariance and homogeneity give
\begin{equation}
\label{eq:scaled-derivative-equation-intro}
 \K(\partial_{x_i}u_{P,r})(Z)
 =-r\,(\partial_{y_i}u)(P\circ\delta_r Z).
\end{equation}
The estimate \eqref{eq:enhanced-y-regularity} therefore makes the source
\(O(r)\).  The cone-monotonicity result of
\cite[Proposition~6.5]{Bowman2025}, localized as explained after
Theorem~\ref{thm:bowman-input}, provides an aperture
\(\vartheta_0>0\) such that the relevant directional monotonicity holds for
every diffusion direction in
\[
 \mathcal C_{\vartheta_0}(e_m)
 =
 \left\{
 e\in\mathbb S^{m-1}:
 e\cdot e_m>1-\vartheta_0
 \right\}.
\]
Fix \(\vartheta_1\in(0,\vartheta_0)\), and choose \(a_0\in(0,1)\) so small
that
\[
 e_j^\pm
 =
 {(e_m\pm a_0e_j)}/{\sqrt{1+a_0^2}}
 \in\mathcal C_{\vartheta_1}(e_m),
 \qquad j=1,\ldots,m-1.
\]
Thus these directions lie in a fixed subcone compactly contained in
\(\mathcal C_{\vartheta_0}(e_m)\).  Set
\[
 q_0=(u_{P,r})_{x_m},
 \qquad
 q_j^\pm=(u_{P,r})_{x_m}
 \pm a_0(u_{P,r})_{x_j}.
\]
Cone monotonicity and the scale-uniform domination proved in
Lemma~\ref{lem:uniform-reference-values} give
\[
 q_0>0,
 \qquad
 0\leq q_j^\pm\leq 2q_0,
 \qquad
 u_{P,r}\leq Cq_0
\]
in a bounded portion \(G\) of the rescaled positivity set, obtained by
intersecting \(\{u_{P,r}>0\}\) with an intrinsic box of fixed size.  Moreover, \eqref{eq:enhanced-y-regularity} and
\eqref{eq:scaled-derivative-equation-intro} give
\[
 \|\K q_0\|_{\mathrm L^\infty}
 +
 \|\K q_j^\pm\|_{\mathrm L^\infty}
 \leq Cr.
\]

Let \(q\) be any one of the functions \(q_j^\pm\), and let \(q^h\) be its
Perron \(\K\)-harmonic replacement in \(G\) with boundary data \(q\), as
defined in Subsection~\ref{subsec:Kolmogorov-Dirichlet}.  For \(A>0\)
sufficiently large and \(r\) sufficiently small, the barrier
\[
 B=Aq_0-u_{P,r}
\]
satisfies \(0\leq B\leq Aq_0\) and \(\K B\leq-1/2\) in \(G\).
Perron comparison then gives
\begin{equation}
\label{eq:harmonic-replacement-intro}
 |q-q^h|\leq Crq_0
 \qquad\text{in }G,
\end{equation}
as proved in Lemma~\ref{lem:harmonic-replacement}.  Applying the same
argument to the replacement \(q_0^h\) of \(q_0\) yields
\(q_0^h\simeq q_0\).  Together with \(0\leq q\leq2q_0\), this gives
\[
 \bigl|
 {q}/{q_0}-{q^h}/{q_0^h}
 \bigr|\leq Cr.
\]
Thus \(\K\)-harmonic replacement changes the relevant derivative quotient
by an \(O(r)\) error.

We next apply the homogeneous boundary Harnack inequality proved in \cite{NystromAsymptoticallyCylindrical2026} to suitable
positive oscillation envelopes formed from \(q^h\) and \(q_0^h\).  If
necessary, a small multiple of \(q_0^h\) is added to an envelope to make its
forward and backward reference values quantitatively comparable. This
addition does not change its quotient oscillation.  Transferring the
resulting homogeneous oscillation contraction back to \(q/q_0\), and then
returning to the original scale, yields
\begin{equation}
\label{eq:oscillation-recursion-intro}
 \omega(\theta r)
 \leq
 \vartheta\omega(r)+Cr,
 \qquad
 0<\theta,\vartheta<1,
\end{equation}
where \(\omega(r)\) denotes the oscillation, at scale \(r\), of one of the
quotients \(q_j^\pm/q_0\).  Iterating
\eqref{eq:oscillation-recursion-intro} gives a power modulus of continuity
for these quotients and hence for the non-degenerate normal.

A separate argument is required in the intrinsic time direction.  Normal
H\"older continuity controls the graph in the non-degenerate variables,
whereas the \(C^{0,1/2}_y\) graph estimate already gives an error of order
\(d_{\K}^{3/2}\) in the higher-order variables.  For the flow direction we
use the transport derivative \(Yu\).  In \(\Omega_u\),
\begin{equation}
\label{eq:transport-equation-intro}
 \K(Yu)=2\sum_{i=1}^m\partial_{x_i}\partial_{y_i}u
 =2\operatorname{div}_x(\nabla_yu).
\end{equation}
After rescaling, the vector field on the right-hand side has size \(O(r)\).
A particular-solution estimate combined with boundary H\"older decay gives
a second recursion of the form \eqref{eq:oscillation-recursion-intro}, now
for \(\sup|Yu|\).  This upgrades the qualitative boundary vanishing of
\(Yu\) to power decay.  Integration along the \(Y\)-flow and quadratic
non-degeneracy then give the improvement of flatness in the time direction.

\subsection{Organization of the paper}

The paper is organized as follows.  Section~\ref{sec:geometry} recalls the
Kolmogorov geometry and defines intrinsic \(C^{1,\beta}\) hypersurfaces.
Section~\ref{sec:inputs} explains the reduction and classification of
global blow-up limits and states the input from the boundary comparison
theory.  Section~\ref{sec:derivatives} records the derivative
equations, cone monotonicity, and boundary vanishing.  In
Section~\ref{sec:inhomogeneous} we prove the inhomogeneous quotient
oscillation lemma.  Section~\ref{sec:normal} applies it to obtain H\"older
continuity of the normal.  Section~\ref{sec:transport} proves power decay of
the transport derivative.  Section~\ref{sec:flatness} combines these
estimates to prove Theorem~\ref{thm:main}.  Finally,
Section~\ref{sec:remarks} contains further remarks.

\section{Kolmogorov geometry and intrinsic differentiability}
\label{sec:geometry}

Put \(X=(x,y)\in\R^m\times\R^m\).  We recall the group structure introduced
in \eqref{eq:group-law-intro}-\eqref{eq:dilations-intro}, now in the notation
used throughout the proofs.  The group law is
\begin{equation}
\label{eq:group-law}
 (\widetilde X,\widetilde t)\circ(X,t)
 =
 (\widetilde x+x,
  \widetilde y+y-t\widetilde x,
  \widetilde t+t).
\end{equation}
The identity is the origin, and
\begin{equation}
\label{eq:group-inverse}
 (x,y,t)^{-1}=(-x,-y-tx,-t).
\end{equation}
The intrinsic dilations are
\begin{equation}
\label{eq:dilations}
 \delta_r(x,y,t)=(rx,r^3y,r^2t).
\end{equation}
The vector fields \(\partial_{x_i}\) and \(Y\) are left invariant, and
\(\K\) is homogeneous of degree two.  We use the homogeneous quasi-norm
\[
 \|(X,t)\|_{\K}
 =
 |x|+|y|^{1/3}+|t|^{1/2}.
\]
It induces the left-invariant, generally non-symmetric quasi-distance
\[
 \dK^0(P,Q)
 =
 \|Q^{-1}\circ P\|_{\K}.
\]
Throughout the paper we use its symmetrization
\begin{equation}
\label{eq:distance}
 \dK(P,Q)
 =
 \max\bigl\{
 \dK^0(P,Q),\dK^0(Q,P)
 \bigr\}.
\end{equation}
The two quasi-distances are quantitatively equivalent, so this
symmetrization changes only harmless constants.

For \(z=(z_1,\ldots,z_k)\in\mathbb R^k\), we write
\[
 |z|_\infty=\max_{1\leq i\leq k}|z_i|.
\]
Write \(x=(x',x_m)\) and \(y=(y',y_m)\), with
\(x',y'\in\R^{m-1}\).  We write \(Q_{M,r}\) for the boxes
\begin{align}
 Q_{M,r}=\{(x',x_m,y,t):{}
 &|x'|_\infty<r,\quad |x_m|<c_Mr,\quad |y|_\infty<c_Mr^3,\quad |t|<2r^2\},
\label{eq:boxes}
\end{align}
and \(Q_{M,r}(P)=P\circ Q_{M,r}\).  The geometric constant
\(c_M=c(m,M)\) is fixed by the conditions below.

We next make precise the class of graph domains used below.  Set
\(\zeta=(x',y',y_m,t)\).  For a continuous function \(\psi\), define
\begin{equation}
\label{eq:intrinsic-graph-domain}
 \Omega_\psi=\{(x',x_m,y',y_m,t):x_m>\psi(\zeta)\},
 \qquad
 \Delta_\psi=\{x_m=\psi(\zeta)\}.
\end{equation}
For the intrinsic Lipschitz graphs defined below, the quasi-distance
induced on the graph is equivalent, on fixed boxes, to
\begin{align}
 D_\psi(\zeta,\widetilde\zeta)={}&|x'-\widetilde x'|
 +|y'-\widetilde y'+(t-\widetilde t)\widetilde x'|^{1/3}
 +|t-\widetilde t|^{1/2} +|y_m-\widetilde y_m
 +(t-\widetilde t)\psi(\widetilde\zeta)|^{1/3}.
\label{eq:boundary-quasidistance}
\end{align}
The correction terms in \eqref{eq:boundary-quasidistance} come from the
non-Euclidean group law \eqref{eq:group-law}. In particular, the intrinsic
Lipschitz condition is not the Euclidean Lipschitz condition in
\((x',y,t)\).

\begin{definition}
\label{def:intrinsic-lipschitz-domain}
We call \(\Omega_\psi\) a non-characteristic intrinsic Lipschitz graph
domain, with constant \(M\), if
\begin{equation}
\label{eq:intrinsic-lipschitz-condition}
 |\psi(\zeta)-\psi(\widetilde\zeta)|
 \leq M D_\psi(\zeta,\widetilde\zeta)
\end{equation}
whenever the two graph points belong to the coordinate patch under
consideration.
\end{definition}

\begin{remark}
In Definition~\ref{def:intrinsic-lipschitz-domain}, the adjective
non-characteristic records that the graphing
direction \(\partial_{x_m}\) belongs to the diffusive layer and is uniformly
transverse to the graph.  Thus, after applying the same orthogonal rotation
to the \(x\)- and \(y\)-variables,
every domain considered here has the form \eqref{eq:intrinsic-graph-domain}.
\end{remark}

The constant \(c_M\) in \eqref{eq:boxes} is chosen sufficiently
large, depending only on \(m\) and \(M\), so that the graph remains a fixed
distance below the upper \(x_m\)-face of every sufficiently small box
centered on \(\Delta_\psi\).  Indeed, after translating and dilating the center to the
origin, \eqref{eq:intrinsic-lipschitz-condition} bounds the height of the graph over the
base of the normalized box by
\begin{equation}
\label{eq:box-constant-choice}
 C(m)M\bigl(1+c_M^{1/3}\bigr).
\end{equation}
We choose \(c_M\) so large that this quantity is at most \(c_M/2\).  The
projection of the box onto the variables \((x',y',y_m,t)\) is a rectangle,
and every vertical section of the truncated graph domain is an interval
containing the horizontal level \(x_m=3c_Mr/4\), in the coordinates centered
at \(P_0\).  Vertical segments, followed by a segment in this horizontal
level, therefore join any two points of the truncation.  Consequently,
\begin{equation}
\label{eq:connected-graph-truncation}
 \Omega_\psi\cap Q_{M,r}(P_0)
 \quad\text{is connected}
\end{equation}
whenever \(P_0\in\Delta_\psi\), \(r>0\), and the box lies in the coordinate
neighborhood with its entire projected base in the domain of \(\psi\).
We can also enlarge \(c_M\), by another factor depending only on \(m,M\),
to ensure that the set in \eqref{eq:connected-graph-truncation} contains
the reference points \(A_{r,\Lambda}(P_0)\) and
\(A_{r,\Lambda}^\pm(P_0)\) introduced in
\eqref{eq:translated-reference-points},  see
\cite{NystromAsymptoticallyCylindrical2026}.

For \(P\in\Delta_\psi\), apply the left translation
\(Z\mapsto P^{-1}\circ Z\).  Within the resulting coordinate patch, write
\begin{equation}
\label{eq:translated-graph}
 P^{-1}\circ\Omega_\psi=\Omega_{\psi_P},
 \qquad
 \psi_P(0)=0.
\end{equation}
The rescaled graph function \(\psi_{P,r}\) is defined intrinsically by
\begin{equation}
\label{eq:rescaled-graph}
 \delta_{1/r}\bigl(P^{-1}\circ\Omega_\psi\bigr)
 =
 \Omega_{\psi_{P,r}}.
\end{equation}
Equivalently,
\[
 \psi_{P,r}(x',y,t)
 =r^{-1}\psi_P(rx',r^3y,r^2t).
\]
Let \(\mathcal T_{M,R}\) denote the projection of \(Q_{M,R}\) onto the
\((x',y,t)\)-variables.  The localized cylindrical deviation of \(\psi\) at
\(P\), at scale \(r\) and normalized radius \(R\), is
\begin{equation}
\label{eq:localized-cylindrical-defect}
 \operatorname{cyl}_{R,P,r}(\psi)
 =
 \sup
 \frac{
 \left|
 \psi_{P,r}(x',y',y_m,t)
 -
 \psi_{P,r}(x',y',\widetilde y_m,t)
 \right|}
 {|y_m-\widetilde y_m|^{1/3}},
\end{equation}
where the supremum is taken over pairs in \(\mathcal T_{M,R}\) that belong
to the rescaled graph coordinate patch, have the same \((x',y',t)\), and
satisfy \(y_m\neq\widetilde y_m\).  The deviation vanishes when the graph is
cylindrical, that is, independent of \(y_m\).  Its smallness means that,
after translation and rescaling, the domain is close to a cylindrical
domain in the scale-invariant sense required by the boundary comparison
theorem.

\begin{definition}
\label{def:C1beta}
Let \(\beta\in(0,1)\).  We say that a graph
\(S=\Delta_\psi\) is locally \(C_{\K}^{1,\beta}\) if, on every relatively
compact graph coordinate patch \(S_0\subset S\), there is a constant
\(C<\infty\) such that the following holds.  For every \(P\in S_0\), there
exists \(a_P\in\R^{m-1}\) such that
\begin{equation}
\label{eq:C1beta-remainder}
 \left|\psi_P(x',y,t)-a_P\cdot x'\right|
 \leq
 C\left(|x'|+|y|^{1/3}+|t|^{1/2}\right)^{1+\beta}
\end{equation}
whenever
\[
 (x',\psi_P(x',y,t),y,t)\in P^{-1}\circ S_0.
\]
Moreover,
\begin{equation}
\label{eq:C1beta-gradient}
 |a_P-a_{\widetilde P}|
 \leq
 C(\dK(P,\widetilde P))^\beta,
 \qquad P,\widetilde P\in S_0.
\end{equation}
The vector \(a_P\) is the intrinsic first-order differential of the graph
at \(P\).  Equivalently, after fixing the orientation, the intrinsic unit
normal in the diffusion variables,
\begin{equation}
\label{eq:normal-from-intrinsic-gradient}
 \nu(P)
 =
 \frac{(-a_P,1)}{\sqrt{1+|a_P|^2}}
 \in\mathbb S^{m-1},
\end{equation}
is H\"older continuous with respect to \(\dK\), its \(m\)-th component is
bounded away from zero on every relatively compact graph coordinate patch,
and the flatness estimate \eqref{eq:flatness-main} holds.
\end{definition}

The equivalence in Definition~\ref{def:C1beta} is the intrinsic graph
version of the usual first-order Campanato characterization.  The particular
form used in this paper is established in
Lemma~\ref{lem:geometric-upgrade}.

\begin{remark}
\label{rem:Kolmogorov-differentiability}
When we say that the graph is differentiable in the Kolmogorov sense at
\(P\in S\), we mean that, after translating \(P\) to the origin by the
Kolmogorov group law, there exists \(a_P\in\R^{m-1}\) such that
\begin{equation}
\label{eq:Kolmogorov-differentiability}
 \lim_{\rho\downarrow0}
 \frac{1}{\rho}
 \sup_{\substack{
 |x'|+|y|^{1/3}+|t|^{1/2}\leq\rho\\
 (x',\psi_P(x',y,t),y,t)\in P^{-1}\circ S}}
 \left|\psi_P(x',y,t)-a_P\cdot x'\right|
 =0.
\end{equation}
Thus the intrinsic blow-ups of the graph at \(P\) converge to the tangent
hyperplane \(x_m=a_P\cdot x'\).   Only \(x'\) appears in this first-order polynomial because the variables
\(x'\), \(t\), and \(y\) have homogeneous degrees one, two, and three,
respectively. The translation by the group law is essential because
ordinary Euclidean recentering does not preserve \(\K\) or its intrinsic
geometry.  Condition \eqref{eq:Kolmogorov-differentiability} is
qualitative as it gives neither a uniform rate of convergence nor continuity of \(a_P\) with
respect to \(P\).  Definition~\ref{def:C1beta} strengthens it by imposing
the power remainder \(O(\rho^{1+\beta})\) and a
\(\beta\)-H\"older modulus for the intrinsic first-order differential.
\end{remark}

\section{Preliminaries on the obstacle problem and boundary comparison principles}
\label{sec:inputs}

In this section we collect the two principal inputs used in the proof of the
main theorem.  We first recall the regularity, blow-up classification, and
free-boundary geometry obtained in \cite{Bowman2025} under the quantitative
thickness condition.  We then state the boundary Harnack inequality from
\cite{NystromAsymptoticallyCylindrical2026} in the form needed for the
asymptotically cylindrical graph domains arising here.  Along the way, we
identify the quantitative consequences of these results that will be used
in the \(\K\)-harmonic-replacement and oscillation arguments.

\subsection{The regular free-boundary theorem}
\label{subsec:bowman-input}

We state the results from \cite{Bowman2025} in the variables of
\eqref{eq:operator}.  The variables \((v,x,t)\) used there correspond to our
\((x,-y,t)\).  For a set \(E\subset\R^m\), let \(\md(E)\) denote its minimal
diameter, namely the infimum of the distances between two parallel
hyperplanes such that \(E\) is contained in the slab between them.  At
\(P=(x_0,y_0,t_0)\in\Gamma_u\), the thickness function, after the
corresponding change of variables, is
\begin{equation}
\label{eq:thickness}
 \delta_r(u,P)
 =
 \sup_{y\in B_{r^3}(y_0+r^2x_0)}
 \frac{
 \md\bigl(
 \{x\in B_r(x_0):u(x,y,t_0-r^2)=0\}
 \bigr)}{r}.
\end{equation}
This quantity measures the normalized geometric thickness of the contact set
on a backward time slice.  More precisely, for each admissible \(y\), the
minimal diameter measures the smallest width, over all directions in the
\(x\)-space, of the corresponding \(x\)-section of the contact set.  Thus a
lower bound
\[
 \delta_r(u,P)\geq\sigma>0
\]
implies that, for at least one \(y\) in the indicated intrinsic
neighborhood, the corresponding section has minimal diameter
greater than \(\sigma r/2\). In particular, the contact set does not collapse
to an arbitrarily thin slab in any diffusive direction.

The normalization by \(r\) makes \(\delta_r\) invariant under the
Kolmogorov dilations.  The center \(y_0+r^2x_0\) is the point reached from
\(y_0\) by following the transport associated with \(P\) backward from
\(t_0\) to \(t_0-r^2\), hence its displacement is dictated by the
non-Euclidean group law rather than by an ordinary Euclidean translation.
We use \eqref{eq:thickness} only through
Theorem~\ref{thm:bowman-input} below.

For \(P\in\Gamma_u\) and \(0<r\leq r_*\), define the rescaling of \(u\)
centered at \(P\) by
\begin{equation}
\label{eq:bowman-rescaling}
 u_{P,r}(Z)
 =
 r^{-2}{u(P\circ\delta_r Z)},
 \qquad
 Z=(x,y,t).
\end{equation}
The positivity set of \(u_{P,r}\) is therefore
\[
 \Omega_{u_{P,r}}
 =
 \delta_{1/r}\bigl(P^{-1}\circ\Omega_u\bigr),
\]
and \(u_{P,r}\) satisfies the same normalized obstacle equation in its
rescaled domain.

\begin{theorem}
\label{thm:bowman-input}
Let \(u\) be a bounded, nonnegative solution of \eqref{eq:obstacle},
and suppose that \eqref{eq:thickness-main} holds at
\(P_0\in Q_{1/2}^\circ\).
Then, after reducing the radius and applying the same orthogonal
rotation to the \(x\)- and \(y\)-variables, there exist
\(M\geq1\), \(C\geq1\), \(r_*>0\), \(\kappa\in(0,1)\), and \(c_0>0\)
such that the following statements hold in
\(Q_{M,4r_*}(P_0)\Subset Q_1^\circ\).

\begin{enumerate}[label=\textnormal{(\roman*)}]

\item Every point
\(P\in\Gamma_u\cap Q_{M,4r_*}(P_0)\) is regular.  More precisely, there is a
unique vector \(\nu(P)\in\mathbb S^{m-1}\) such that
\begin{equation}
\label{eq:bowman-unique-blowup}
 u_{P,r}
 \longrightarrow
 \frac12(x\cdot\nu(P))_+^2
 \qquad\text{locally in \(C_x^1\) on \(\{t\leq0\}\) as \(r\downarrow0\)}.
\end{equation}
In particular, the convergence holds for the full family of rescalings and
not merely along a subsequence.

\item The free boundary is a graph
\begin{equation}
\label{eq:bowman-graph}
 \Gamma_u=\{x_m=\psi(x',y,t)\},
 \qquad
 \Omega_u=\{x_m>\psi(x',y,t)\},
\end{equation}
and
\begin{equation}
\label{eq:bowman-graph-regularity}
 |\psi(\zeta)-\psi(\widetilde\zeta)|
 \leq
 M\bigl(
 |x'-\widetilde x'|
 +|y-\widetilde y|^{1/2}
 +|t-\widetilde t|^{1/2}
 \bigr)
\end{equation}
in adapted coordinates.  The graph is differentiable in the Kolmogorov
sense at every point, in the sense explained in
Remark~\ref{rem:Kolmogorov-differentiability}.  Moreover, the diffusion direction \(e_m\) is uniformly non-characteristic,
in the sense that
\begin{equation}
\label{eq:uniform-transversality}
 \nu(P)\cdot e_m\geq c_0
 \qquad
 \text{for }
 P\in\Gamma_u\cap Q_{M,4r_*}(P_0).
\end{equation}

\item The free boundary satisfies a two-sided intrinsic corkscrew condition.
More precisely, for every
\[
 P\in\Gamma_u\cap Q_{M,3r_*}(P_0)
 \quad\text{and}\quad
 0<r\leq r_*,
\]
there exist points \(Z_{P,r}^{+}\in\Omega_u\) and
\(Z_{P,r}^{-}\in\Lambda_u\) such that
\begin{equation}
\label{eq:two-sided-corkscrews}
  Q_{\kappa r}(Z_{P,r}^{+})
  \subset
  \Omega_u\cap Q_{M,r}(P),\qquad
  Q_{\kappa r}(Z_{P,r}^{-})
  \subset
  \Lambda_u\cap Q_{M,r}(P).
\end{equation}
Thus both the positivity set and the contact set contain intrinsic cylinders
whose radii are a fixed proportion of the scale \(r\).  The constant
\(\kappa\) is independent of \(P\) and \(r\).

\item The estimates
\begin{equation}
\label{eq:bowman-solution-bounds}
 \|D_x^2u\|_{\mathrm L^\infty(Q_{M,3r_*}(P_0))}
 +\|\nabla_yu\|_{\mathrm L^\infty(Q_{M,3r_*}(P_0))}
 +\|\partial_tu\|_{\mathrm L^\infty(Q_{M,3r_*}(P_0))}
 \leq C
\end{equation}
hold.  Moreover, \(Yu\), initially defined in \(\Omega_u\), extends
continuously to the regular free boundary with
\begin{equation}
\label{eq:Yu-boundary-vanishing}
 Yu=0
 \qquad\text{on }
 \Gamma_u\cap Q_{M,3r_*}(P_0).
\end{equation}

\item There are \(c>0\), an aperture \(\vartheta_0>0\), and
\(\eta\in(0,1)\) such that the following holds.  For every
\(P\in\Gamma_u\cap Q_{M,3r_*}(P_0)\), every \(0<r\leq r_*\), and every
\(e\in\mathbb S^{m-1}\) satisfying $e\cdot e_m>1-\vartheta_0$, one has
\begin{equation}
\label{eq:cone-monotonicity-sign}
 \partial_eu_{P,r}\geq0
\end{equation}
in a fixed rescaled box containing the reference points introduced in
\eqref{eq:reference-points} below.  In addition,
\begin{equation}
\label{eq:quadratic-cone-lower}
 u_{P,r}(x,0,0)
 \geq
 c(x_m)_+^2
\end{equation}
whenever
\[
 x\in B_1,
 \qquad
 x_m\geq\eta|x|.
\]
Thus, after the single rotation made above, \(u\) is monotone in one fixed
cone of diffusion directions and is quadratically non-degenerate in a
fixed cone about \(e_m\), uniformly throughout the regular neighborhood.
\end{enumerate}
All constants may depend on \(m\),
\(\|u\|_{\mathrm L^\infty(Q_1)}\), the thickness data, and the fixed regular
neighborhood used in the localization below.  They are uniform for \(P\)
in the indicated regular neighborhood and for \(0<r\leq r_*\).
\end{theorem}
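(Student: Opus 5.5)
The statement is essentially a repackaging of \cite{Bowman2025} in our variables, so the plan is to translate the cited results under the change of variables \((v,x,t)\mapsto(x,-y,t)\) and to carry out one common localization that makes all constants uniform on a single box \(Q_{M,4r_*}(P_0)\). First, I would check that the thickness function \eqref{eq:thickness} is the image of Bowman's thickness under this change of variables. The center \(y_0+r^2x_0\) is exactly the transported point dictated by \eqref{eq:group-law}. Thus \eqref{eq:thickness-main} is Bowman's hypothesis, and \cite[Theorem~1.2]{Bowman2025} applies at \(P_0\). That theorem, together with \cite[Lemma~6.6]{Bowman2025}, excludes the polynomial branch of the classification \cite[Propositions~5.6--5.7]{Bowman2025} at every free-boundary point of a smaller neighborhood. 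Hence every such point is regular, and \cite[Proposition~6.11]{Bowman2025} gives uniqueness of the blow-up for the full family \(r\downarrow0\). This yields (i), with the \(C^1_x\) convergence coming from the uniform \(D_x^2u\) bounds and Arzel\`a--Ascoli.

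Second, I would fix the rotation. Let \(\nu(P_0)\) be the normal at \(P_0\), and apply the same orthogonal map \(O\) to \(x\) and \(y\), with \(O\nu(P_0)=e_m\). Applying the same rotation to both variables preserves \(\K\) and the group law, since \(x\cdot\nabla_y\) is invariant under simultaneous rotation. The cone monotonicity \cite[Proposition~6.5]{Bowman2025} then gives \(\partial_e u\geq0\) near \(P_0\) for \(e\in\mathcal C_{\vartheta_0}(e_m)\). By the implicit function argument, the free boundary is a graph in \(x_m\), which gives \eqref{eq:bowman-graph}. Estimate \eqref{eq:bowman-graph-regularity} is \cite[Theorem~1.2]{Bowman2025}, and Kolmogorov differentiability is \cite[Proposition~1.3]{Bowman2025}.

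For the uniform transversality \eqref{eq:uniform-transversality}, I would argue as follows. Monotonicity of each blow-up \(\tfrac12(x\cdot\nu(P))_+^2\) in every direction of the cone forces \(e\cdot\nu(P)\geq0\) for all \(e\in\mathcal C_{\vartheta_0}(e_m)\). This gives \(\nu(P)\cdot e_m\geq c_0(\vartheta_0)>0\). The corkscrews in (iii) follow from \cite[Theorem~1.2]{Bowman2025} together with nondegeneracy, and their constants are fixed once \(M\) and \(c_M\) are chosen as in \eqref{eq:box-constant-choice}. The bounds in (iv) are the optimal growth \eqref{eq:optimal-quadratic-growth}, the interior estimates, and \cite[Theorem~1.1]{Bowman2025}. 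For the boundary vanishing of \(Yu\), I would argue by blow-up. Any limit of \(Yu\) at \(P\) is recovered from \(Yu_{P,r}(0)\)-type values along rescalings. Since \(Yu_{P,r}=(Yu)(P\circ\delta_r\cdot)\) and the limit \(\tfrac12(x\cdot\nu)_+^2\) satisfies \(Y u_0=0\), the limit vanishes. Continuity of \(Yu\) up to \(\Gamma_u\) then follows from uniform convergence, using the \(C^{1}\)-type compactness of \(u_{P,r}\) in \(t\) and the transport direction supplied by the bounds.

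The main obstacle is (v): obtaining the cone monotonicity and the quadratic lower bound \eqref{eq:quadratic-cone-lower} uniformly in \(P\) and \(r\leq r_*\), rather than only at \(P_0\). I would argue by contradiction and compactness. Suppose there are points \(P_k\), radii \(r_k\), and directions \(e_k\) for which \eqref{eq:cone-monotonicity-sign} or \eqref{eq:quadratic-cone-lower} fails. Then \(u_{P_k,r_k}\) subconverge to a global solution. If \(r_k\to0\), the limit is a half-space solution with normal in a cone about \(e_m\), by the classification and by (i) applied uniformly via Lemma~6.6. If \(r_k\) stays bounded below, the limit is a rescaling of \(u\) at a limit point in the closed neighborhood. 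In either case the limit satisfies a strict version of the property in the limit box. I would then transfer the property back to \(u_{P_k,r_k}\) for large \(k\), as in the proof of \cite[Proposition~6.5]{Bowman2025}. This transfer uses the inhomogeneous equation \(\K\partial_eu=-\partial_{e\cdot y}u\) with an \(O(r)\) source and a barrier near the free boundary. Shrinking \(\vartheta_0\) and \(r_*\) makes the final constants depend only on the stated data.
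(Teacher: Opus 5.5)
\textbf{There is a genuine gap in your argument for \eqref{eq:Yu-boundary-vanishing}.}

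\emph{The bounds do not give the compactness you use.} The bounds in (iv) give only \(Yu\in\mathrm L^\infty\), that is, Lipschitz control of \(u\) along \(Y\) and in \(t\). This yields at most weak-\(*\) convergence of \(Yu_{P,r}=(Yu)(P\circ\delta_r\,\cdot)\), not the ``\(C^1\)-type compactness in \(t\)'' you invoke.

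\emph{Fixed-center blow-ups miss tangential approach.} A blow-up at the fixed center \(P\) says nothing about points \(Z\to P\) with \(\dK(Z,\Gamma_u)\ll\dK(Z,P)\). After rescaling by \(\dK(Z,P)\), such points lie arbitrarily close to the free boundary of \(u_{P,r}\), where \(Yu\) is uncontrolled.

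\emph{What is needed.} You need an interior compactness argument along sequences approaching the boundary. This is how the paper localizes \cite[Proposition~6.7]{Bowman2025} in Remark~\ref{rem:symmetric-localization}. Rescale at a nearest free-boundary point \(P_Z\) by \(d_Z=\dK(Z,\Gamma_u)\). The rescaled \(Z\) then has a unit intrinsic neighborhood in the positivity set, where \(\K u_{P_Z,d_Z}=1\). Interior hypoelliptic estimates there give locally uniform convergence of \(Yu_{P_Z,d_Z}\).

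Because the centers \(P_Z\) vary, you must also know that these limits are half-space solutions. This does not follow from uniqueness of the blow-up at each fixed point. It uses that the corrected Weiss energies are continuous in the center and decrease to the common value \(\omega_*\). By Dini's theorem, the convergence is then uniform on compact portions of the regular set.

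\textbf{Your compactness proof of (v) lacks the same uniformity and is also circular.}

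You claim that limits of \(u_{P_k,r_k}\) are half-space solutions ``with normal in a cone about \(e_m\).'' That requires the normals near \(P_0\) to stay close to \(e_m\), which is essentially \eqref{eq:uniform-transversality}. But you deduced \eqref{eq:uniform-transversality} from the common cone in (v). Your dual-cone argument for \eqref{eq:uniform-transversality} is fine once such a cone is available. The case of \(r_k\) bounded below provides no strictness; it disappears only if you negate the statement with \(r_*=1/k\).

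\emph{How the paper avoids this.} The paper does not argue by contradiction. It uses the half-space approximation only at the single center \(P_0\) and one small scale. For \(v=u_{P_0,r}\), one has \(A\partial_ev-v\geq-\varepsilon_0\) on a fixed box, \(\partial_ev=0\) on the contact set, and \(\K(A\partial_ev)\leq1/2\). The improvement-of-minimum lemma \cite[Lemma~6.4]{Bowman2025} then gives \(\partial_eu\geq0\) on a whole neighborhood of \(P_0\). Monotonicity of \(u\) itself on a fixed neighborhood is automatically uniform in \(P\) and \(r\).

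Once \(|Yu|\leq1/2\) there, \(\Delta_xu\geq1/2\) on every \((y,t)\)-slice. Elliptic non-degeneracy and comparison along the cone then give \eqref{eq:quadratic-cone-lower}.

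\textbf{You do not treat positive times.} Bowman's convergence and Lemma~6.4 are set in backward cylinders. But (v) is needed on boxes containing \(A^{+}_{1,\Lambda}\), where \(t=+1\). The paper extends the half-space approximation to \(t>0\) by forward uniqueness for the parabolic obstacle problem. It then applies Lemma~6.4 in backward cylinders whose top times lie above the smaller symmetric box.

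\textbf{The graph does not come from an implicit function argument.} The graph \eqref{eq:bowman-graph} cannot be obtained that way, because \(u=|\nabla_xu|=0\) on \(\Gamma_u\). It comes from monotonicity in \(e_m\) together with a product neighborhood whose lower face lies in the interior of \(\Lambda_u\) and whose upper face lies in \(\Omega_u\) \cite[Propositions~7.3--7.4]{Bowman2025}.
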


\begin{remark}
For an orthogonal matrix \(R\in O(m)\), set $\mathcal R(x,y,t)=(Rx,Ry,t)$, $\widetilde u=u\circ\mathcal R^{-1}$. Orthogonality gives
\[
 \K\widetilde u=(\K u)\circ\mathcal R^{-1},
\]
so the obstacle equation retains its form. The map \(\mathcal R\)
also preserves the group law and commutes with the intrinsic dilations.
We may therefore choose \(R\) so that \(R\nu(P_0)=e_m\).
This single rotation is then fixed throughout the regular neighborhood.
\end{remark}

Theorem \ref{thm:bowman-input} collects the local conclusions of Theorems~1.1-1.2 and
Proposition~1.3 of \cite{Bowman2025}.  The continuous vanishing of \(Yu\),
cone monotonicity, quadratic cone non-degeneracy, and uniqueness of the
half-space blow-up are proved there in Propositions~6.7, 6.5, 7.3, and~6.11,
respectively.

\begin{remark}
\label{rem:symmetric-localization}
In this rather lengthy remark we explain the localization needed to use the conclusions in Theorem \ref{thm:bowman-input} in symmetric
boxes, since the cylinders in \cite{Bowman2025} are backward in time.
First, \(P_0\) is an interior-time point.  The upper semicontinuity of the
limiting Weiss energy and the energy gap imply that the regular set is
relatively open in the free boundary, see
\cite[Corollary~5.8]{Bowman2025}.  We may therefore choose an open
neighborhood of \(P_0\), compactly contained in \(Q_1^\circ\), in which
every free-boundary point is regular. At any such point \(P=(x_P,y_P,t_P)\), the interior bounds for
\(D_x^2u\), \(\nabla_yu\), and \(\partial_tu\), together with the fact that
\(u(P)=|\nabla_xu(P)|=0\), see \eqref{eq:gradient-zero-free-boundary} below, give
\[
 0\leq u_{P,r}(x,y,t)
 \leq C\bigl(|x|^2+|t|+r|y|\bigr)
\]
on every fixed box, including its positive-time part.  The derivative
bounds also give compactness in \(C_x^1\) there.  Any limit is independent
of \(y\), solves the parabolic obstacle problem, and agrees on
\(\{t\leq0\}\) with the unique stationary half-space profile at \(P\). Put
\[
 \mathcal H=\Delta_x-\partial_t.
\]
Forward uniqueness for the parabolic obstacle problem with at most
quadratic growth implies that the limit is the same profile for \(t>0\).
Indeed, for two solutions \(U,V\) with the same initial values,
\(\mathcal H(U-V)\geq0\) on \(\{U>V\}\). Comparison, using a growing
polynomial barrier at spatial infinity, gives \(U\leq V\), and reversing
their roles gives equality.  Thus the half-space approximation at an
interior regular point also holds on fixed symmetric boxes. To obtain one common cone, apply this approximation at \(P_0\), after
rotating its blow-up normal to \(e_m\).  On a fixed bounded box, the
half-space profile \(U=\frac12(x_m)_+^2\) satisfies
\(A\partial_eU-U\geq0\) for every \(e\) in a fixed cone about \(e_m\),
with one sufficiently large \(A\).  Thus, for \(v=u_{P_0,r}\), the
\(C_x^1\) approximation gives \(A\partial_ev-v\geq-\varepsilon_0\)
uniformly in that cone.  Also, \(\partial_ev=0\) on the contact set and
\[
 \K(A\partial_ev)
 =-Ar(\partial_{y_e}u)\circ(P_0\circ\delta_r)
 \leq 1/2
\]
when \(r\) is sufficiently small.  The improvement-of-minimum lemma
\cite[Lemma~6.4]{Bowman2025}, applied in backward cylinders whose top times
lie above a smaller symmetric box, gives \(A\partial_ev-v\geq0\) there.
This supplies a common monotonicity cone on the entire smaller box,
including its positive-time part.  The rotation and cone are fixed once
and for all on the resulting neighborhood. The proof of \cite[Proposition~6.7]{Bowman2025} also localizes on compact
subsets of this open regular neighborhood.  Indeed, the corrected Weiss
energies are continuous in the boundary center and decrease to the same
low-energy value as the scale decreases.  Dini's theorem makes this
convergence uniform on each compact free-boundary portion.  The
interior compactness argument in that proof therefore applies to
sequences approaching any point of this portion.  All backward cylinders
used in the argument remain inside the ambient domain, whether their
centers lie before or after the time coordinate of \(P_0\).  It follows
that \(Yu\) vanishes continuously on the entire free-boundary portion of
the symmetric neighborhood, uniformly in the boundary center. Reduce the neighborhood so that \(|Yu|\leq1/2\) in its positivity set.
On each fixed \((y,t)\)-slice we then have
\(\Delta_xu=1-Yu\geq1/2\).  Elliptic non-degeneracy on spatial balls,
followed by comparison along the common monotonicity cone, gives
\eqref{eq:quadratic-cone-lower}, as in
\cite[Lemma~7.1 and Proposition~7.3]{Bowman2025}.  The balls and comparison
segments can be chosen with fixed margins inside the larger neighborhood,
so both the lower-bound constant and the admissible radius are uniform in
\(P\).  The graph construction and the differentiability argument in
\cite[Proposition~7.4 and Proposition~1.3]{Bowman2025} now apply, and the
resulting two-sided geometry gives the corkscrew assertions. These arguments require an open regular neighborhood and a half-space
blow-up at its center.  They do not require a separate thickness
assumption at every nearby point.  Hence they also apply when the center
is any point of \(\Gamma_u^{\mathrm{reg}}\), by
\cite[Corollary~5.8 and Proposition~6.11]{Bowman2025}.  All constants are
uniform after the smaller regular neighborhood has been fixed.
\end{remark}

We will repeatedly reduce \(r_*\) of Theorem \ref{thm:bowman-input} without changing its notation.
The scale-uniform quantitative domination
\(u_{P,r}\leq C\partial_eu_{P,r}\), for directions in a smaller fixed cone,
is proved separately in Lemma~\ref{lem:uniform-reference-values} below.

\begin{remark} Note that the representation in \eqref{eq:bowman-graph} is not obtained by
applying the implicit function theorem to \(u\), since $u=0$ and $\nabla_xu=0$ on $\Gamma_u$. Instead, one uses cone monotonicity and the positivity and contact
regions constructed in \cite[Propositions~7.3-7.4]{Bowman2025}. After shrinking the patch, these estimates give a product neighborhood
whose lower face lies in the interior of \(\Lambda_u\) and whose upper
face lies in \(\Omega_u\). Let \(I=(\ell_-,\ell_+)\) denote its
\(x_m\)-interval. Monotonicity in the direction \(e_m\) allows us to define
\[
 \psi(x',y,t)
 =\inf\{s\in I:u(x',s,y,t)>0\},
\]
and gives $\Omega_u=\{x_m>\psi(x',y,t)\}$ within this neighborhood. Comparison along the common monotonicity cone gives Lipschitz
dependence on \(x'\). The quadratic lower bound in positivity cones,
together with the bounds for \(\nabla_yu\) and \(\partial_tu\), gives
the one-half H\"older dependence on \((y,t)\) stated in
\eqref{eq:bowman-graph-regularity}. In particular, \(\psi\) is continuous
and
\[
 \Gamma_u=\{x_m=\psi(x',y,t)\}
\]
locally. This construction precedes the boundary Harnack argument,
which improves the regularity of the existing graph.
\end{remark}

\begin{remark}
The common exponent \(1/2\) in \(y\) and \(t\) comes from the
Lipschitz regularity of \(u\) in both variables, combined with
quadratic non-degeneracy in the graph direction.  Fix \(x'\) and set
\[
 a=\psi(x',y,t),
 \qquad
 b=\psi(x',\widetilde y,\widetilde t).
\]
After interchanging the two pairs if necessary, assume \(b\geq a\).
Since \(u(x',b,\widetilde y,\widetilde t)=0\), we obtain
\[
 \begin{aligned}
 c(b-a)^2
 &\leq u(x',b,y,t)=u(x',b,y,t)-u(x',b,\widetilde y,\widetilde t)\leq C\bigl(|y-\widetilde y|+|t-\widetilde t|\bigr).
 \end{aligned}
\]
Taking square roots yields the stated exponents.  Since \(y\) has
homogeneous degree three, its \(1/2\)-H\"older estimate is stronger
than the \(1/3\)-H\"older control associated with intrinsic
Lipschitz regularity. More precisely, \eqref{eq:bowman-graph-regularity} implies
\eqref{eq:intrinsic-lipschitz-condition} on every bounded coordinate patch.
With \(\widetilde x=(\widetilde x',\psi(\widetilde\zeta))\), we have
\[
 \begin{aligned}
 |y-\widetilde y|^{1/2}
 &\leq |y-\widetilde y+(t-\widetilde t)\widetilde x|^{1/2}
       +|\widetilde x|^{1/2}|t-\widetilde t|^{1/2}\leq C D_\psi(\zeta,\widetilde\zeta).
 \end{aligned}
\]
The last inequality uses boundedness of the patch and
\(s^{1/2}\leq C s^{1/3}\) for bounded \(s\geq0\).
Increasing \(M\) if necessary, we choose it to control both
\eqref{eq:bowman-graph-regularity} and
\eqref{eq:intrinsic-lipschitz-condition}.  All subsequent geometric
constants, including \(c_M\) and \(\Lambda\), use this choice, and \(r_*\)
is reduced so that the corresponding boxes remain in the coordinate patch.
\end{remark}

\begin{remark} For the remainder of the paper, \(u\) denotes a fixed solution of the
obstacle problem satisfying the hypotheses of
Theorem~\ref{thm:bowman-input}, and we work in a fixed regular
neighborhood of \(P_0\).  Unless otherwise stated, constants may depend
on the geometric and analytic data of this neighborhood, but not on the
boundary point or the scale.  These data include a uniform modulus
\(\omega\) of continuous boundary vanishing of \(Yu\), as used in
Lemma~\ref{lem:uniform-reference-values}, and the initial radii chosen
using that modulus.
\end{remark}
\subsection{Blow-up reduction and classification of global limits}
\label{subsec:bowman-classification}

Here we describe in some detail the blow-up argument underlying
part~\textnormal{(i)} of Theorem~\ref{thm:bowman-input}.  This explains how
the Kolmogorov obstacle problem reduces to the classical parabolic problem
at small scales and how the thickness condition selects the half-space
branch of the global classification.

We record the blow-up reduction and classification here, relying on
\cite{Bowman2025} for the cited results. With $\mathcal H=\Delta_x-\partial_t$,  \(\K=\mathcal H+x\cdot\nabla_y\), and \(\mathcal H\) is the heat
operator obtained in the blow-up limit once
the dependence on the transport variables has disappeared.
\begin{proposition}
\label{prop:bowman-classification}
Let \(P\in\Gamma_u\cap Q_1^\circ\) and let \(r_j\downarrow0\).  After
passing to a subsequence, \(u_{P,r_j}\) converges locally on \(\{t\leq0\}\), together
with its diffusion gradient, to a global function \(u_0=u_0(x,t)\) such that
\begin{equation}
\label{eq:parabolic-global-obstacle}
 \mathcal H u_0=\chi_{\{u_0>0\}}
 \quad\text{in }\R^m\times(-\infty,0],
 \qquad
 0\leq u_0(x,t)\leq C(1+|x|^2+|t|).
\end{equation}
The function \(u_0\) is parabolically homogeneous of degree two,
\begin{equation}
\label{eq:parabolic-homogeneity}
 u_0(rx,r^2t)=r^2u_0(x,t),
 \qquad r>0.
\end{equation}
Consequently, exactly one of the following alternatives occurs:
\begin{align}
 u_0(x,t)&=\frac12(x\cdot e)_+^2,
 &&e\in\mathbb S^{m-1},
 \label{eq:regular-global-branch}\\
 u_0(x,t)&=a t+\frac12x\cdot A x,
 &&A=A^{\mathsf T}\geq0,\quad a\leq0,
 \quad \operatorname{tr}A-a=1.
 \label{eq:singular-global-branch}
\end{align}
The first is the regular, or low-energy, branch.  The second is the
polynomial, or high-energy, branch.  If the thickness hypothesis
\eqref{eq:thickness-main} holds at \(P_0\),
then only \eqref{eq:regular-global-branch} occurs at every free-boundary
point in a fixed neighborhood of \(P_0\).  At those points the vector \(e\)
is unique.
\end{proposition}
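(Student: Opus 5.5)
The plan is to assemble Proposition~\ref{prop:bowman-classification} from the results of \cite{Bowman2025} recalled above, in four steps: compactness, reduction to the heat operator, homogeneity and classification, and selection of the branch.

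\textbf{Compactness.} Fix \(P\in\Gamma_u\) and \(r_j\downarrow0\). The optimal growth \eqref{eq:optimal-quadratic-growth} gives \(0\le u_{P,r_j}\le C(1+|x|^2+|t|)\) on every fixed backward box. The bounds \eqref{eq:known-solution-regularity} and \eqref{eq:enhanced-y-regularity} scale to
\[
 |D_x^2u_{P,r}|+|Yu_{P,r}|\le C,
 \qquad
 |\nabla_y u_{P,r}|\le Cr.
\]
Hence \(u_{P,r_j}\) and \(\nabla_xu_{P,r_j}\) are equicontinuous on compact subsets of \(\{t\le0\}\). Arzel\`a--Ascoli then gives a subsequence converging locally in \(C^1_x\) to some \(u_0\).

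\textbf{Reduction to the heat operator.} The bound \(|\nabla_yu_{P,r}|\le Cr\to0\) forces \(u_0\) to be independent of \(y\). Since \(Yu_{P,r}=x\cdot\nabla_yu_{P,r}-\partial_tu_{P,r}\), the transport term vanishes in the limit, and \(\K u_{P,r_j}\to\mathcal H u_0\) in distributions. To identify the right-hand side I would use two facts. On \(\{u_0>0\}\), local uniform convergence gives \(u_{P,r_j}>0\) eventually, so \(\chi_{\{u_{P,r_j}>0\}}\to1\) there. On the interior of \(\{u_0=0\}\), the limit satisfies \(\mathcal H u_0=0\). It remains to show that \(\partial\{u_0>0\}\) is Lebesgue-null. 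For this I would use the nondegeneracy \(\sup_{Q_r(P)}u\ge cr^2\) together with the quadratic growth, which yields porosity of the free boundary. This gives \eqref{eq:parabolic-global-obstacle}, i.e. \cite[Proposition~5.1]{Bowman2025}.

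\textbf{Homogeneity and classification.} Next I would invoke the almost-Weiss monotonicity formula \cite[Proposition~4.1]{Bowman2025}. Its error terms come from the transport and are \(O(r)\) by the scaled bound on \(\nabla_y u\). The limiting energy \(W(u_{P,r_j},\rho)\) is then independent of \(\rho\). Equality in the monotonicity formula forces \(u_0\) to be parabolically homogeneous of degree two, which is \eqref{eq:parabolic-homogeneity} and \cite[Proposition~5.4]{Bowman2025}. The classification of Caffarelli, Petrosyan and Shahgholian \cite{CaffarelliPetrosyanShahgholian2004} for homogeneous global solutions of the parabolic obstacle problem with quadratic growth yields exactly the alternatives \eqref{eq:regular-global-branch} and \eqref{eq:singular-global-branch}. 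The constraint \(\operatorname{tr}A-a=1\) is forced by \(\mathcal H u_0=1\). The two branches have different balanced energies, so they are mutually exclusive, and the branch is determined by the limiting energy at \(P\), independently of the subsequence.

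\textbf{Branch selection and uniqueness.} This is the main obstacle. The thickness condition at \(P_0\) must exclude the polynomial branch at all nearby points and at all scales. I would argue by contradiction. Suppose \(P_k\to P_0\) are polynomial points. Upper semicontinuity of the limiting Weiss energy \cite[Corollary~5.8]{Bowman2025} and the energy gap make the polynomial set closed. A compactness argument at scale \(r_0\) would then produce a limit whose contact set is a thin slab or has measure zero. Near a polynomial blow-up the zero set of \(u\) has vanishing minimal diameter, since \(\{at+\tfrac12x\cdot Ax=0\}\) is a null set or lies in \(\ker A\). This contradicts \(\delta_{r_0}\ge\sigma_0\ge\sigma(r_0)\), and the argument is \cite[Lemma~6.6]{Bowman2025}. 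The delicate point is making this quantitative uniformly in the center \(P\), for which I would rely on the universal modulus \(\sigma\). Finally, uniqueness of \(e\), independent of the subsequence, is \cite[Proposition~6.11]{Bowman2025}. It follows from an epiperimetric-type or monotonicity-based rate combined with the directional monotonicity, and I would simply cite that result.
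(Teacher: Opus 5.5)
Your first three steps follow the paper's Steps 1--3 essentially verbatim:
\begin{itemize}
\item compactness from the scaled bounds $|D_x^2u_{P,r}|+|Yu_{P,r}|\le C$ and $|\nabla_yu_{P,r}|\le Cr$;
\item $y$-independence and stability of the obstacle equation;
\item homogeneity from the almost-Weiss formula;
\item the Caffarelli--Petrosyan--Shahgholian dichotomy and the two energy values.
\end{itemize}
Your porosity argument makes explicit the role of non-degeneracy. You should also use non-degeneracy to see that the origin remains a free-boundary point of $u_0$, which excludes $u_0\equiv0$.

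The branch selection, however, does not close as written. Closedness of the high-energy set correctly reduces matters to showing that $P_0$ itself is not a polynomial point. But a polynomial blow-up at $P_0$ only says that $u_{P_0,r}$ approaches $at+\frac12x\cdot Ax$ as $r\downarrow0$, hence that $\delta_r(u,P_0)\to0$ as $r\downarrow0$. This is compatible with $\delta_{r_0}(u,P_0)\ge\sigma_0$ at the single fixed scale $r_0$. Moreover, no compactness argument at scale $r_0$ for one fixed solution produces a global limit. The real difficulty is not uniformity in the center, which closedness already handles. It is the passage from one-scale information to the infinitesimal blow-up.

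That passage is what the universal modulus encodes, and it is the content of the paper's Step 4 and \cite[Lemmas~6.1 and 6.6]{Bowman2025}. One argues by compactness over the whole class of solutions, with scales tending to zero. Suppose high-energy points $P_j$ of admissible solutions $u_j$ had $\delta_{r_j}(u_j,P_j)\ge\varepsilon$ with $r_j\downarrow0$. The rescalings then subconverge to a global parabolic solution $u_0$, and two things pass to the limit.
\begin{itemize}
\item \emph{Energy.} The almost-Weiss formula carries the energy lower bound $2\omega_*$ to every scale of $u_0$. Together with exact monotonicity for $u_0$ and the two admissible energy values, this forces $u_0$ to be a homogeneous polynomial.
\item \emph{Thickness.} The zero sets accumulate inside $\{u_0=0\}$, and the minimal diameter is continuous under Hausdorff convergence, so the thickness bound survives.
\end{itemize}
Your observation that the slice $\{t=-1\}$ of $\{at+\frac12x\cdot Ax=0\}$ is empty or contained in $\ker A$ then gives the contradiction. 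It is a slightly more direct substitute for the paper's route, which uses $D_x^2u_0\ge0$, John's ellipsoid and $\partial_tu_0\le0$ to produce a persistent cylinder in the contact set. The resulting uniform thinness at high-energy points is what contradicts \eqref{eq:thickness-main}.

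Finally, uniqueness of $e$ requires no epiperimetric inequality or rate. Once one blow-up is $\frac12(x\cdot e)_+^2$, improvement of the minimum and the $O(r)$ commutator source give $\partial_{e'}u_{P,r}\ge0$ at all sufficiently small scales, for each $e'$ with $e'\cdot e>0$. Any other half-space blow-up inherits these monotonicities and must therefore have normal $e$.
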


\begin{proof}
We divide the argument into four steps.

\smallskip
\noindent
\emph{Step 1: disappearance of the transport variables.}
By the interior estimates in \cite[Theorem~1.1]{Bowman2025}
and the scaling \eqref{eq:bowman-rescaling},
\begin{equation}
\label{eq:y-gradient-disappears}
 \nabla_yu_{P,r}(Z)
 =r(\nabla_yu)(P\circ\delta_rZ),
 \qquad
 \|\nabla_yu_{P,r}\|_{\mathrm L^\infty(K)}\leq C_Kr.
\end{equation}
The local estimates, quadratic growth, and Arzel\`a-Ascoli therefore give a
subsequence converging in \(C_x^1\) on compact subsets to a global limit
which is independent of \(y\).  In the positivity set the rescaled equation
can be written as
\begin{equation}
\label{eq:almost-parabolic-rescaling}
 \mathcal H u_{P,r}
 =1-x\cdot\nabla_yu_{P,r}.
\end{equation}
The last term tends to zero locally uniformly by
\eqref{eq:y-gradient-disappears}.  Stability of the obstacle equation,
non-degeneracy, and quadratic growth now give
\eqref{eq:parabolic-global-obstacle}, including the fact that the origin is
a free-boundary point of \(u_0\). This is
\cite[Proposition~5.1]{Bowman2025} in the present variables.

\smallskip
\noindent
\emph{Step 2: almost-Weiss monotonicity and homogeneity.}
The argument in \cite{Bowman2025} localizes \(u\) by a cutoff and introduces
a scale-invariant Weiss functional.  By intrinsic translation, we may
write the functional in coordinates centered at \(P\), where \(P\)
corresponds to the origin.  In these coordinates it has the form
\begin{equation}
\label{eq:bowman-weiss-functional}
 \mathcal W(r,v)
 =\frac1{r^4}
 \int_{-4r^2}^{-r^2}\int_{\R^{2m}}
 \left(|\nabla_xv|^2+2v+\frac{v^2}{t}\right)
 \mathscr G(x,y,t)\,\d x\,\d y\,\d t,
\end{equation}
where, in our variables, the weight from \cite[Section~4]{Bowman2025} is
\[
 \mathscr G(x,y,t)
 =\frac{\pi^{-m/2}}{(-t)^{2m}}
   \exp\left(\frac{|x|^2}{4t}+\frac{|y|^2}{t^3}\right),
 \qquad t<0.
\]
This Gaussian weight differs from the fundamental solution
\(\Gamma_{\K}\) used in Section~\ref{sec:transport}.
The relevant features of \(\mathscr G\) are its Kolmogorov homogeneity,
its Gaussian decay, and the identity which converts differentiation in
\(r\) into the square of the dilation generator.  In the original
variables, we use the centered notation
\[
 \mathcal W(r,v;P)=\mathcal W(r,v(P\circ\cdot)).
\]
We choose \(\chi\) by left translation of a fixed smooth cutoff to \(P\),
with support in the solution domain and equal to one on a smaller
intrinsic box centered at \(P\).  After absorbing the mixed transport
term into the nonnegative square in the Weiss identity, the estimate
\eqref{eq:y-gradient-disappears}, together with the cutoff estimates, gives
\[
 \frac{\mathrm d}{\mathrm d r}\mathcal W(r,u\chi;P)\geq-Cr
\]
for almost every sufficiently small \(r\).  Consequently,
\begin{equation}
\label{eq:almost-weiss-monotonicity}
 r\longmapsto \mathcal W(r,u\chi;P)+Cr^2
 \quad\text{non-decreasing for }0<r<r_0.
\end{equation}
In particular, \(\lim_{r\downarrow0}\mathcal W(r,u\chi;P)\) exists.  This is the content of
\cite[Proposition~4.1]{Bowman2025}.  If \(u_{P,r_j}\to u_0\), scaling and
\eqref{eq:almost-weiss-monotonicity} show that
\(\mathcal W(\rho,u_0)\) has the same value for every \(\rho>0\).  In the
limit the error has vanished, and equality in the Weiss formula gives
\[
 x\cdot\nabla_xu_0+2t\partial_tu_0-2u_0=0.
\]
This is equivalent to \eqref{eq:parabolic-homogeneity}.

\smallskip
\noindent
\emph{Step 3: the parabolic dichotomy.}
At this point the classification is no longer a genuinely Kolmogorov
argument.  The proof invokes the global parabolic obstacle theory of
\cite{CaffarelliPetrosyanShahgholian2004}, see also
\cite[Proposition~5.6]{Bowman2025}.  A global solution of
\eqref{eq:parabolic-global-obstacle} with quadratic growth is non-increasing
in time and convex in the diffusion variables, i.e.,
\begin{equation}
\label{eq:parabolic-convexity}
 \partial_tu_0\leq0,
 \qquad D_x^2u_0\geq0.
\end{equation}
For a degree-two homogeneous solution, the parabolic classification theorem
then gives precisely \eqref{eq:regular-global-branch} or
\eqref{eq:singular-global-branch}.  Our normalization of the polynomial is
chosen so that
\(\mathcal H(at+\frac12x\cdot Ax)=\operatorname{tr}A-a=1\).  There is also
an energy formulation of the dichotomy.  The balanced energy
at \(P\) is
\begin{equation}
\label{eq:balanced-energy}
 \mathcal E(P;u)
 =\lim_{r\downarrow0}\mathcal W(r,u\chi;P).
\end{equation}
It takes only the two values
\begin{equation}
\label{eq:two-energy-values}
 \mathcal E(P;u)\in\{\omega_*,2\omega_*\},
 \qquad \omega_*>0,
\end{equation}
where \(\omega_*\) is the energy of a stationary half-space solution for
the chosen normalization of \(\mathscr G\).  The value \(\omega_*\)
characterizes the half-space branch and
\(2\omega_*\) the polynomial branch.  Since the corrected energies at
positive scale are continuous in the center and monotone down to
\eqref{eq:balanced-energy}, \(\mathcal E\) is upper semicontinuous on the
free boundary.  Thus the regular set is relatively open.

\smallskip
\noindent
\emph{Step 4: thickness selects the regular branch and fixes its direction.}
The thickness is stable under the compactness above.  For a global parabolic
limit, a lower bound for the minimal diameter of a time slice of the contact
set, together with John's ellipsoid lemma and
\eqref{eq:parabolic-convexity}, produces a ball in the contact set which
persists over a non-trivial time interval.  A non-zero polynomial in
\eqref{eq:singular-global-branch} cannot vanish on such a cylinder.  A
compactness contradiction therefore shows that a one-scale thickness bound,
after the quantitative rescaling used in \cite{Bowman2025}, forces
\(\mathcal E=\omega_*\) at every nearby free-boundary point.  Equivalently,
all blow-ups there belong to \eqref{eq:regular-global-branch}, see
\cite[Lemmas~6.1 and 6.6]{Bowman2025}.

It remains to explain uniqueness.  If one blow-up has direction \(e\), the
improvement-of-minimum lemma and the commutator estimate for diffusion
derivatives give monotonicity of \(u\) in every direction in a cone about
\(e\) at all sufficiently small scales.  Any second half-space blow-up must
inherit all of these directional monotonicities.  Letting the aperture tend
to a hemisphere forces its normal to be \(e\).  Hence the half-space blow-up
is unique, as in \cite[Proposition~6.11]{Bowman2025}.
\end{proof}

\begin{remark}
\label{rem:meaning-global-classification}
It is useful to distinguish two statements sometimes both referred to as
``classification of global solutions.''  The compactness and scaling
argument of \cite[Proposition~5.1]{Bowman2025} first shows that every global
limit obtained by blowing up a free-boundary point is independent of \(y\)
and solves the parabolic obstacle problem.  General global parabolic
solutions with quadratic growth are non-increasing in time and convex in
\(x\).  The sharper half-space/polynomial dichotomy additionally uses the
degree-two homogeneity furnished by the Weiss formula and therefore applies
to global blow-up limits.  No assertion that an arbitrary non-homogeneous
global solution has one of the two explicit forms is used here.
\end{remark}

The classification enters our proof in three concrete ways.  First, the
exclusion of \eqref{eq:singular-global-branch} gives a neighborhood consisting
entirely of regular points.  Second, the half-space approximation supplies
the common monotonicity cone.  Third, low-energy compactness is the input
to the proof in \cite{Bowman2025} that \(Yu\) vanishes continuously on the
regular free boundary.  The latter two facts yield the quantitative
reference-value estimates through the spatial-slice argument below.  Our boundary Harnack inequalities
are used only after these facts have been established and their role is to turn
the qualitative information on the normal and the transport derivative into
power moduli.

An immediate consequence of \eqref{eq:bowman-graph-regularity} is
\begin{equation}
\label{eq:ym-half-holder}
 |\psi(x',y',y_m,t)-\psi(x',y',\widetilde y_m,t)|
 \leq
 C|y_m-\widetilde y_m|^{1/2}.
\end{equation}
We claim that, at Kolmogorov scale \(r\), the corresponding normalized
\(C^{0,1/3}_{y_m}\) deviation is bounded by \(Cr^{1/2}\).  Indeed, after
translating \(P\in\Delta_\psi\) to the origin by the group law, the
rescaled graph is
\begin{equation}
\label{eq:rescaled-graph-at-origin}
 \psi_{P,r}(x',y,t)
 =
 r^{-1}\psi_P(rx',r^3y,r^2t).
\end{equation}
It follows from \eqref{eq:ym-half-holder} that
\begin{equation}
\label{eq:rescaled-ym-half-holder}
 \left|
 \psi_{P,r}(x',y',y_m,t)
 -
 \psi_{P,r}(x',y',\widetilde y_m,t)
 \right|
 \leq
 Cr^{1/2}|y_m-\widetilde y_m|^{1/2}.
\end{equation}
Consequently, on every fixed normalized box,
\[
 \frac{
 \left|
 \psi_{P,r}(x',y',y_m,t)
 -
 \psi_{P,r}(x',y',\widetilde y_m,t)
 \right|}
 {|y_m-\widetilde y_m|^{1/3}}
 \leq
 C_Rr^{1/2},
\]
where \(C_R\) depends only on the normalized radius.  Taking the supremum
over the pairs occurring in
\eqref{eq:localized-cylindrical-defect} gives
\begin{equation}
\label{eq:cylindrical-defect-decay}
 \operatorname{cyl}_{R,P,r}(\psi)
 \leq
 C_Rr^{1/2}.
\end{equation}
Thus the free-boundary graph is asymptotically cylindrical, uniformly in
the regular neighborhood, in precisely the sense required by
Theorem~\ref{thm:BHI-input} below.

\subsection{Homogeneous boundary comparison}

We next record the localized boundary Harnack inequality used in this
paper.  It follows from the boundary comparison theorem for asymptotically
cylindrical intrinsic Lipschitz domains established in
\cite{NystromAsymptoticallyCylindrical2026}, after translation, dilation,
and restriction to a smaller intrinsic box.  The statement is formulated
below in the notation of the present paper.  For the earlier boundary
comparison theory in Lipschitz-type Kolmogorov domains, we refer to
\cite{NystromPolidoro2016}.  The formulation below is tailored to the
present application.

In this subsection,
\[
 \Omega_\psi
 =
 \{(x',x_m,y,t):x_m>\psi(x',y,t)\},
 \qquad
 \Delta_\psi=\partial\Omega_\psi,
\]
denotes a general intrinsic Lipschitz graph domain.  In the application to
the obstacle problem, \(\Omega_\psi\) agrees locally with the positivity set
\(\Omega_u\).  The forward and backward reference points are denoted by
\(A_{r,\Lambda}^{\pm}(P)\), and \(A_{r,\Lambda}(P)\) denotes the central
interior reference point.  More precisely, in coordinates centered at the
origin, set
\begin{align}
 A_{r,\Lambda}^{\pm}
 &=(0',\Lambda r,0',\mp\tfrac23\Lambda r^3,\pm r^2),\quad A_{r,\Lambda}
 =(0',\Lambda r,0',0,0).
\label{eq:reference-points}
\end{align}
where the entries are ordered as \((x',x_m,y',y_m,t)\), and define
\begin{equation}
\label{eq:translated-reference-points}
 A_{r,\Lambda}^{\pm}(P)=P\circ A_{r,\Lambda}^{\pm},
 \qquad A_{r,\Lambda}(P)=P\circ A_{r,\Lambda}.
\end{equation}
For \(\Lambda\) sufficiently large, depending only on the Lipschitz
constant, these are quantitative interior reference points.  The displacement
in \(y_m\) compensates for the transport generated by the displacement in
\(x_m\). This is why Euclidean vertical corkscrew points alone are not
appropriate.

Let \(P\in\Delta_\psi\) and \(r>0\) be such that the reference points
\(A_{r,\Lambda}^{\pm}(P)\) belong to \(\Omega_\psi\).  We say that a positive
function \(v\) is \(H\)-balanced with respect to \(P\) at scale \(r\) if
\begin{equation}
\label{eq:reference-balance-definition}
 H^{-1}
 \leq
 \frac{v(A_{r,\Lambda}^{+}(P))}
      {v(A_{r,\Lambda}^{-}(P))}
 \leq H.
\end{equation}
Thus the forward and backward reference values of \(v\) are quantitatively
comparable, with a comparison constant independent of the scale and the
boundary point in the coordinate patch under consideration.

The quotient estimate below is a scale-\(r\) consequence of
\cite[Theorem~3.1]{NystromAsymptoticallyCylindrical2026}.  The uniform
smallness assumption in \eqref{eq:BHI-defect-hypothesis} is slightly
stronger than the hypothesis at a single center and scale appearing there.
We impose it because the argument below applies boundary comparison after
further recentering and rescaling.

\begin{theorem}
\label{thm:BHI-input}
Fix \(M,H\geq1\), and let \(\Lambda=\Lambda(m,M)\) be the constant used in
the construction of the reference points.  There exists
\(\eta_0=\eta_0(m,M,H)\in(0,1)\) such that the following holds.  Let
\(\Omega_\psi\) be a non-characteristic intrinsic Lipschitz graph domain in
the sense of Definition~\ref{def:intrinsic-lipschitz-domain}, with intrinsic
Lipschitz constant at most \(M\).  Let \(P\in\Delta_\psi\) and \(r>0\), and
suppose that
\begin{equation}
\label{eq:BHI-defect-hypothesis}
 \sup_{\substack{
 \widetilde P\in\Delta_\psi\cap Q_{M,2r}(P)\\
 0<s\leq r}}
 \operatorname{cyl}_{2,\widetilde P,s}(\psi)
 \leq\eta_0,
\end{equation}
with every box occurring in
\eqref{eq:BHI-defect-hypothesis} contained in the graph coordinate patch.
Let \(v_1,v_2>0\) satisfy
\begin{equation}
\label{eq:BHI-harmonic-functions}
 \K v_1=\K v_2=0
 \quad\text{in }\Omega_\psi\cap Q_{M,2r}(P),
\end{equation}
vanish continuously on
\(\Delta_\psi\cap Q_{M,2r}(P)\), and be \(H\)-balanced with respect to
\(P\) at scale \(r\), in the sense of
\eqref{eq:reference-balance-definition}.  Then there exist
\[
 \kappa=\kappa(m,M)\in(0,1),
 \qquad
 \sigma=\sigma(m,M,H)\in(0,1),
\]
and \(C=C(m,M,H)\geq1\) such that
\begin{equation}
\label{eq:homogeneous-holder}
 \left|
 \frac{v_2(Z)}{v_1(Z)}
 -
 \frac{v_2(\widetilde Z)}{v_1(\widetilde Z)}
 \right|
 \leq
 C
 \left(
 \frac{\dK(Z,\widetilde Z)}{r}
 \right)^\sigma
 \frac{v_2(A_{r,\Lambda}(P))}
      {v_1(A_{r,\Lambda}(P))}
\end{equation}
whenever
\[
 Z,\widetilde Z
 \in
 \Omega_\psi\cap Q_{M,\kappa r}(P).
\]
In particular, \(v_2/v_1\) extends continuously to the corresponding graph
portion.
\end{theorem}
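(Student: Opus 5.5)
The plan is to reduce to the normalized situation \(P=0\), \(r=1\), and then derive \eqref{eq:homogeneous-holder} from the boundary comparison theorem \cite[Theorem~3.1]{NystromAsymptoticallyCylindrical2026} by an oscillation-decay iteration.

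\emph{Normalization.} Replace \(\Omega_\psi\) by \(\delta_{1/r}(P^{-1}\circ\Omega_\psi)=\Omega_{\psi_{P,r}}\), and \(v_i\) by \(v_i(P\circ\delta_r\,\cdot)\). Left translations and the dilations \(\delta_r\) preserve \(\K\) up to the factor \(r^2\). They also preserve the intrinsic Lipschitz condition \eqref{eq:intrinsic-lipschitz-condition} with the same \(M\), since \(D_\psi\) is built from the group law and is homogeneous of degree one. They map the reference points \eqref{eq:reference-points} to \(A^{\pm}_{1,\Lambda}\) and \(A_{1,\Lambda}\). Finally, they carry the cylindrical deviations consistently: \(\operatorname{cyl}_{2,\widetilde P,s}(\psi)\) becomes the deviation of the normalized graph at the corresponding center and at scale \(s/r\). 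Hence \eqref{eq:BHI-defect-hypothesis} becomes the same bound \(\eta_0\) at every center \(\widetilde P\in\Delta\cap Q_{M,2}\) and every scale \(s\le1\). The quotient \(v_2/v_1\) and the right-hand side of \eqref{eq:homogeneous-holder} are invariant under these changes, so it suffices to treat \(r=1\).

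\emph{One-step comparison.} For a boundary point \(\widetilde P\in\Delta\cap Q_{M,\kappa_0}\) and a scale \(s\le\kappa_0\), I would invoke \cite[Theorem~3.1]{NystromAsymptoticallyCylindrical2026} at center \(\widetilde P\) and scale \(s\). Its single-center smallness hypothesis is supplied by the uniform bound \eqref{eq:BHI-defect-hypothesis}; this is exactly why the uniform version was imposed. For any two nonnegative solutions \(w_1,w_2\) that vanish on \(\Delta\cap Q_{M,2s}(\widetilde P)\) and are \(H'\)-balanced at \((\widetilde P,s)\), the theorem gives
\[
 C^{-1}\frac{w_2(A_{s,\Lambda}(\widetilde P))}{w_1(A_{s,\Lambda}(\widetilde P))}
 \le\frac{w_2}{w_1}\le
 C\frac{w_2(A_{s,\Lambda}(\widetilde P))}{w_1(A_{s,\Lambda}(\widetilde P))}
 \quad\text{on }\Omega\cap Q_{M,\kappa s}(\widetilde P).
\]
At the outset, applied with \(\widetilde P=0\), \(s=1\), this yields \(\sup v_2/v_1\le C\,v_2(A_{1,\Lambda})/v_1(A_{1,\Lambda})\) on \(Q_{M,\kappa}\). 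This supplies the normalization factor in \eqref{eq:homogeneous-holder}.

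\emph{Iteration.} Let \(m_k=\inf\) and \(M_k=\sup\) of \(v_2/v_1\) over \(\Omega\cap Q_{M,\theta^k}(\widetilde P)\). Apply the comparison to the nonnegative solutions
\[
 w^-=v_2-m_kv_1,\qquad w^+=M_kv_1-v_2,
\]
each compared with \(v_1\). Evaluating at the central reference point and adding the two Harnack-type inequalities gives
\[
 M_{k+1}-m_{k+1}\le\vartheta\,(M_k-m_k),\qquad\vartheta=\vartheta(m,M,H)<1,
\]
in the standard way. Iterating gives \(\osc_{Q_{M,\theta^k}(\widetilde P)}(v_2/v_1)\le C\vartheta^k\,v_2(A_{1,\Lambda})/v_1(A_{1,\Lambda})\), i.e.\ a power modulus \(\sigma=\log\vartheta/\log\theta\). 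The continuous extension to the graph follows, and the estimate for interior pairs \(Z,\widetilde Z\) follows by combining the boundary decay with interior Hölder estimates for \(\K\)-harmonic functions on intrinsic cylinders of size comparable to the distance to \(\Delta\).

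\emph{Main obstacle: the balancing hypothesis at every step.} Two separate issues arise.
\begin{enumerate}[label=\textnormal{(\alph*)}]
\item One must show that \(v_1\), balanced at \((0,1)\), remains \(H'\)-balanced at all \((\widetilde P,s)\) with \(s\le\kappa_0\). I would derive this from the Carleson estimate together with the forward/backward Harnack chains of \cite{NystromAsymptoticallyCylindrical2026}. These are available with uniform constants precisely because the cylindrical deviation is small at every subscale, so the constant \(H'\) depends only on \(m,M,H\).
\item The differences \(w^\pm\) need not be balanced. Following the device announced in the introduction, I would replace \(w^\pm\) by \(w^\pm+\epsilon(M_k-m_k)v_1\) with a small fixed \(\epsilon\). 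This changes neither the positivity, nor the boundary vanishing, nor the oscillation of the quotient beyond a factor absorbed into \(\vartheta\), and it makes the forward and backward reference values comparable.
\end{enumerate}
I expect verifying (a), with constants independent of \(\widetilde P\) and \(s\), to be the delicate part of the argument.
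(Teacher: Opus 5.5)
Your normalization is correct, and it is essentially all the paper does. The statement is taken from \cite[Theorem~3.1]{NystromAsymptoticallyCylindrical2026}, applied once at the center $P$ and scale $r$ (i.e.\ at $0$ and $1$ after your rescaling), after translation, dilation and restriction to a smaller box. There the balance hypothesis is exactly the assumed one, and the paper takes the quotient estimate itself from that theorem. The uniform smallness over all centers and subscales is imposed only because the later arguments, \eqref{eq:homogeneous-contraction} and Proposition~\ref{prop:inhomogeneous-contraction}, apply the result after recentering and rescaling. You instead use the cited result only as a two-sided comparison at each $(\widetilde P,s)$ and rebuild the oscillation iteration yourself. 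That forces two extra steps, and neither goes through as proposed.

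\emph{Step (b) fails as written.} Write $A,A^\pm$ for the reference points at the current center and scale, $\lambda_A=v_2(A)/v_1(A)$, $\omega_k=M_k-m_k$ and $E^\pm=w^\pm+\epsilon\omega_kv_1$. You only know $E^\pm/v_1\in[\epsilon\omega_k,(1+\epsilon)\omega_k]$, so $E^\pm$ is balanced with constant of order $H'/\epsilon$. Tracking this through the comparison (numerator at $A^+$, denominator at $A^-$, then balance and forward Harnack) gives a lower constant only of size $c\approx\epsilon/K$ with $K\geq1$. The two lower bounds give $m_{k+1}\geq m_k+c(\lambda_A-m_k)-(1-c)\epsilon\omega_k$ and the symmetric bound for $M_{k+1}$. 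Hence you get only $\omega_{k+1}\leq(1-c)(1+2\epsilon)\omega_k$, which is a contraction only if $c>2\epsilon/(1+2\epsilon)$, and this fails for $c\leq\epsilon<1/2$. So the shift cannot be absorbed into $\vartheta$. In the paper the $\delta\omega$-adjustment is harmless only because it is fed into the H\"older estimate \eqref{eq:homogeneous-holder}, where the loss is beaten by $\theta^\sigma$ with $\theta$ chosen after $\delta$; using that here would be circular.

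The device that works in a comparison-based iteration is different. Since $w^++w^-=\omega_kv_1$, one envelope is at least $\frac12\omega_kv_1$ at $A^-$. Forward Harnack from $A^-$ through $A$ to $A^+$, together with the balance of $v_1$, makes that envelope balanced with a constant depending only on $H'$. It also gives that envelope a value at least $c\,\omega_kv_1(A)$ at $A$. Comparing it alone with $v_1$ then yields a one-sided gain and $\omega_{k+1}\leq(1-c)\omega_k$.

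\emph{Step (a) is the real gap.} Balance of $v_1$ at $(\widetilde P,s)$ means $v_1(A^+_{s,\Lambda}(\widetilde P))\lesssim v_1(A^-_{s,\Lambda}(\widetilde P))$, which is a backward Harnack inequality. The Carleson estimate and Harnack chains for $\K$ only bound suprema and earlier values by later values, so they cannot produce it. Propagating top-scale balance to all subscales is a separate nontrivial argument belonging to the cited comparison theory. Without it your one-step comparison is unavailable at $(\widetilde P,s)$ and the iteration never starts. Once you import it from \cite{NystromAsymptoticallyCylindrical2026}, you may as well import the quotient estimate itself, as the paper does.
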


We next record the oscillation consequence of
Theorem~\ref{thm:BHI-input}.  Set
\[
 D_r^\psi(P)=\Omega_\psi\cap Q_{M,r}(P)
\]
and suppose that
\[
 m_r
 =
 \inf_{D_r^\psi(P)}\frac{v_2}{v_1},
 \qquad
 M_r
 =
 \sup_{D_r^\psi(P)}\frac{v_2}{v_1}
\]
are finite.  Define
\begin{equation}
\label{eq:homogeneous-oscillation-envelopes}
 E_r^-
 =
 v_2-m_rv_1,
 \qquad
 E_r^+
 =
 M_rv_1-v_2.
\end{equation}
These are, respectively, the lower and upper oscillation envelopes of
\(v_2/v_1\) at scale \(r\).  They are nonnegative \(\K\)-harmonic
functions in \(D_r^\psi(P)\), vanish continuously on the graph portion,
and satisfy
\[
 \frac{E_r^-}{v_1}
 =
 \frac{v_2}{v_1}-m_r,
 \qquad
 \frac{E_r^+}{v_1}
 =
 M_r-\frac{v_2}{v_1}.
\]

Suppose that \(v_1\) and one of the nontrivial envelopes in
\eqref{eq:homogeneous-oscillation-envelopes} are
\(H_{\mathrm{env}}\)-balanced with respect to \(P\) at scale \(r/2\), and
that \eqref{eq:BHI-defect-hypothesis} holds with
\(\eta_0=\eta_0(m,M,H_{\mathrm{env}})\).  Then there exist
\[
 \theta=\theta(m,M,H_{\mathrm{env}})\in(0,1),
 \qquad
 \vartheta=\vartheta(m,M,H_{\mathrm{env}})\in(0,1),
\]
such that
\begin{equation}
\label{eq:homogeneous-contraction}
 \osc_{D_{\theta r}^\psi(P)}
 \frac{v_2}{v_1}
 \leq
 \vartheta
 \osc_{D_r^\psi(P)}
 \frac{v_2}{v_1}.
\end{equation}
Indeed, put \(\omega_r=M_r-m_r\), and let \(E_r\) be the selected
balanced envelope.  For \(\tau>0\), apply Theorem~\ref{thm:BHI-input}
at scale \(r/2\) to \(v_1\) and \(E_r+\tau v_1\).  The latter is
strictly positive and remains \(H_{\mathrm{env}}\)-balanced.  Use
\(0\leq E_r/v_1\leq\omega_r\), let \(\tau\downarrow0\), and choose
\(\theta\) sufficiently small in \eqref{eq:homogeneous-holder}.

If \(\omega_r>0\) and an envelope has poorly balanced reference values,
adding \(\delta\omega_rv_1\), for fixed \(\delta>0\), supplies the
required balance without changing its quotient oscillation.  We carry
out this adjustment explicitly in the proof of
Proposition~\ref{prop:inhomogeneous-contraction}.

\subsection{Uniform reference values}
\label{subsec:uniform-reference-values}

We now use cone monotonicity and the smallness of \(Yu\) near the regular
graph to obtain the uniform reference-point estimates required for boundary
comparison.  The argument uses the equation on spatial slices and does not
require a quantitative modulus for the blow-up normal.

\begin{lemma}
\label{lem:uniform-reference-values}
Let \(\mathcal C'\subset\mathbb S^{m-1}\) be a closed spherical cone
compactly contained in the common monotonicity cone from
Theorem~\ref{thm:bowman-input}\textnormal{(v)}.  Then there exist
\(r_1\in(0,r_*]\) and constants \(c_1,C_1>0\), depending only on the
fixed regular neighborhood and on \(\mathcal C'\), and independent of
\(P\in\Gamma_u\cap Q_{M,2r_*}(P_0)\), such that
\begin{equation}
\label{eq:uniform-reference-values}
 c_1
 \leq
 \partial_eu_{P,r}(A_{1,\Lambda}^{\pm})
 \leq
 C_1
\end{equation}
for every \(0<r\leq r_1\), every \(e\in\mathcal C'\), and both choices of
sign.  Consequently, these directional derivatives are uniformly balanced
at the forward and backward reference points.  Their reference values are
also mutually comparable, uniformly with respect to \(P\), \(r\), \(e\),
and the choice of sign.  In addition, on every fixed rescaled working box,
after reducing \(r_1\) if necessary,
\begin{equation}
\label{eq:cone-monotonicity-input}
 0\leq u_{P,r}\leq C\partial_eu_{P,r},
 \qquad e\in\mathcal C',\quad 0<r\leq r_1.
\end{equation}
In the last assertion, \(C\) and the admissible choice of \(r_1\) may also
depend on the fixed rescaled working box.  The choice of \(r_1\) includes
dependence on the uniform modulus \(\omega\) of boundary vanishing of
\(Yu\) on the fixed regular neighborhood.
\end{lemma}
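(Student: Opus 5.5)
The plan is to work on spatial slices, as the paper suggests, and to treat the lemma in three parts: the upper bound, the lower bound, and the domination \eqref{eq:cone-monotonicity-input}.

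\emph{Upper bound.} The upper bound in \eqref{eq:uniform-reference-values} follows from \eqref{eq:bowman-solution-bounds}. Since \(u(P)=|\nabla_xu(P)|=0\), we have \(|\nabla_xu_{P,r}|\leq C\) on every fixed rescaled box. This holds uniformly in \(P\) and \(r\), because \(\nabla_x u_{P,r}(Z)=r^{-1}(\nabla_xu)(P\circ\delta_rZ)\) and the second \(x\)-derivatives, \(\nabla_y u\) and \(\partial_t u\) are bounded.

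\emph{Lower bound.} I would not argue by compactness via uniqueness of the blow-up normal, since that is not uniform in \(P\). Instead I would fix the slice \((y,t)=(y^\pm,t^\pm)\) of \(A_{1,\Lambda}^\pm\) and use the equation \(\Delta_x u_{P,r}=1-Yu_{P,r}\) in the positivity set, where \(Yu_{P,r}(Z)=(Yu)(P\circ\delta_rZ)\).

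First choose \(r_1\) by the uniform modulus \(\omega\) of boundary vanishing of \(Yu\). Together with the interior continuity of \(Yu\) near the graph, this gives \(|Yu_{P,r}|\leq 1/4\) on the positive part of the working box. The uniform smallness is available because the box shrinks to scale \(r\) around a free-boundary point, and all points of the positivity set in it are within \(\dK\)-distance \(Cr\) of \(\Gamma_u\). Hence \(\Delta_xu_{P,r}\geq 3/4\) in \(\{u_{P,r}>0\}\) on the slice.

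Next, the slice point \(x^\pm=(0',\Lambda)\), after the group-law correction, lies at height comparable to \(\Lambda\) above the graph, by \eqref{eq:bowman-graph-regularity} and the choice of \(\Lambda\). The elliptic non-degeneracy argument (comparison of \(u_{P,r}-\frac38|x-\bar x|^2\) on a ball, as in \cite[Lemma~7.1]{Bowman2025}) then gives \(\sup_{B_\rho(\bar x)}u_{P,r}\geq c\rho^2\). Combined with cone monotonicity this yields \(u_{P,r}\geq c\) on a fixed ball \(B\) below \(x^\pm\) in the slice, consistent with \eqref{eq:quadratic-cone-lower}.

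Now take \(e\in\mathcal C'\). Because \(\mathcal C'\) is compactly inside the monotonicity cone, the perturbed directions \(e\pm\epsilon\xi\) stay in the cone for all unit \(\xi\) and a fixed \(\epsilon\). This gives the Lipschitz-type inequality \(\partial_e u\geq \epsilon|\nabla_x u|\). Write \(g=\partial_eu_{P,r}\) on the slice. From \(u_{P,r}\geq c\) at a point of \(B\) and \(u_{P,r}=0\) on the graph segment below it, integration along \(e\) gives some point with \(g\geq c'\).

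To propagate this to \(x^\pm\), I would use the inhomogeneous equation \(\K g=-r\,\partial_{y_e}u(\cdot)=O(r)\) and an interior Harnack inequality for \(\K\) in an intrinsic cylinder inside the positivity set. That cylinder has size comparable to \(\Lambda\) thanks to the corkscrews (iii). The Harnack chain must respect the forward/backward time direction, so the chain runs from a point earlier in time; the ball \(B\) can be placed on an earlier slice. The \(O(r)\) error is absorbed for \(r\leq r_1\). This gives \(g(A^\pm_{1,\Lambda})\geq c_1\), and hence mutual comparability of all the reference values.

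\emph{Domination.} For \eqref{eq:cone-monotonicity-input} I would adapt the barrier from Remark~\ref{rem:symmetric-localization}. Set \(w=A\partial_eu_{P,r}-u_{P,r}\), which satisfies \(\K w\leq -1+CAr\leq -1/2\) in the positivity set for small \(r\), and \(w=0\) on the free boundary. Controlling the boundary values on the sides of the working box requires \(w\geq0\) there; this follows from the lower bound \(\partial_eu_{P,r}\geq c\) at interior points together with \(u_{P,r}\leq C\). The remaining region, near the graph on the lateral boundary, is handled by the same slice argument at all centers \(\widetilde P\) near the box, which is why \(C\) and \(r_1\) depend on the box. Once the boundary values are controlled, the maximum principle gives \(w\geq0\).

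The main obstacle is the lower bound uniform in \(P\): the Harnack chain must respect the time direction and must stay a definite distance from the graph. This relies on the uniform smallness of \(Yu\) delivered by \(\omega\).
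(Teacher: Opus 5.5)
Your plan is workable in outline, but your lower bound takes a different and heavier route than the paper.

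\textbf{The paper's lower bound.} The paper never leaves the slice \((y_\pm,t_\pm)\) of the reference point. On a ball \(B_{4a}(x_A)\) lying in the positivity set, it combines \(\Delta_xV=1-Yv\geq1/2\) with the same cone inequality \(\partial_ev\geq\delta|\nabla_xv|\) that you derive. The divergence theorem on \(B_a(x_A)\) then gives \(\sup_{B_a(x_A)}\partial_eV\geq\delta a/(2m)\) at once, so no non-degeneracy lemma and no mean-value point are needed. Next it handles \(\Delta_x\partial_eV=-\partial_e(Yv)\) by harmonic replacement on \(B_{2a}(x_A)\), using the Green-function bound \(\int|\nabla_\xi G_{2a}|\leq C_ma\). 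Finally it moves the bound to \(x_A\) with the elliptic Harnack inequality.

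\textbf{Your route.} You instead use the full equation \(\K g=O(r)\) and a time-forward Kolmogorov Harnack chain from an earlier slice. This gives a cleaner source term: it is \(O(r)\) in \(\mathrm L^\infty\), whereas \(\partial_e(Yv)\) is small only after integrating by parts. The cost is that you need uniform Harnack chains inside the positivity set that respect both the direction of time and the transport in \(y_m\). This is standard in the boundary comparison theory, but it is the most delicate step of your argument and you only assert it. The paper's slice argument involves no time direction at all.

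You should also add one observation. The point where \(g\geq c'\) automatically lies at a definite height above the graph, because \(g\leq|\nabla_xu_{P,r}|\leq C\ell\). Without this, the length of your Harnack chain is not controlled.

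\textbf{The domination step.} For \eqref{eq:cone-monotonicity-input}, your barrier step is circular or superfluous as written. A plain maximum principle for \(w=A\partial_eu_{P,r}-u_{P,r}\) needs \(w\geq0\) on the box faces near the graph, which is the estimate itself. You defer this to ``the same argument at other centers.'' But any argument that gives the lower bound \(\partial_eu_{P,r}(Z)\geq c\ell\) at a point \(Z\) of height \(\ell\), working at scale comparable to \(\ell\), gives it at every point. Together with \(u_{P,r}(Z)\leq C\ell^2\), that is already the domination. This is exactly the paper's proof: it reruns the slice argument on \(B_{4a_*\ell}(x)\times\{(y,t)\}\) and uses no barrier. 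If you want to keep the barrier, use the improvement-of-minimum form instead. It only needs \(w\geq-\varepsilon_0\) near the graph, which is automatic there from \(u_{P,r}\leq C\ell^2\) and \(\partial_eu_{P,r}\geq0\).

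\textbf{The upper bound.} Vanishing of \(\nabla_xu\) at \(P\) alone does not control \(\nabla_xu\) off the slice of \(P\). Bounds on \(\nabla_yu\) and \(\partial_tu\) say nothing about how \(\nabla_xu\) varies in \((y,t)\). Instead, integrate \(D_x^2u\) vertically from the graph point on the same slice, where \(\nabla_xu=0\), as the paper does.
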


\begin{proof}
To prove the lemma we will use the equation on fixed \((y,t)\)-slices, together with the continuous
vanishing of \(Yu\) in \eqref{eq:Yu-boundary-vanishing}.  Write \(\mathcal C\) for the open common monotonicity cone.
For fixed \(P\) and \(r\), work with the intrinsically translated
and rescaled solution
\[
 v(Z)=u_{P,r}(Z)=r^{-2}u(P\circ\delta_r Z).
\]
In these coordinates, \(P\) corresponds to the origin and the
intrinsic scale \(r\) becomes unit scale. Set
\[
 x_A=\Lambda e_m,\qquad
 (y_\pm,t_\pm)
 =\bigl(\mp\tfrac23\Lambda e_m,\pm1\bigr).
\]
Thus \(A_{1,\Lambda}^{\pm}=(x_A,y_\pm,t_\pm)\).

The quantitative reference-point geometry and the uniform intrinsic
Lipschitz character give constants \(a>0\) and \(L<\infty\), independent
of \(P\), \(r\), and the sign, such that first
\[
 \overline{B_{4a}(x_A)}\times\{(y_\pm,t_\pm)\}
 \subset\Omega_v
\]
and these spatial balls lie in the fixed rescaled region where cone
monotonicity holds.  Here and below the balls are Euclidean balls in the
\(x\)-variables.  Second, if
\[
 b_\pm=\psi_{P,r}(0',y_\pm,t_\pm),
\]
then \(0<\Lambda-b_\pm\leq L\).  By initially restricting \(r\) to a
sufficiently small fixed interval, all the corresponding unscaled balls
and the vertical segments from \((0',b_\pm,y_\pm,t_\pm)\) to the reference
points lie in \(Q_{M,3r_*}(P_0)\).  This choice is uniform because the
closure of the indicated set of centers is contained in the larger
regular coordinate patch.

The Hessian estimate in \eqref{eq:bowman-solution-bounds} is invariant
under the rescaling defining the function \(v\).  The spatial \(C^{1,1}\) regularity and
nonnegativity of \(v\) imply that its spatial gradient vanishes at every
point of its zero set.  Integration along the preceding vertical segment
therefore gives
\[
 |\nabla_xv(x_A,y_\pm,t_\pm)|
 \leq C(\Lambda-b_\pm)
 \leq CL.
\]
This proves the upper bound in \eqref{eq:uniform-reference-values}.

For the lower bound, compact containment of \(\mathcal C'\) in
\(\mathcal C\) gives a number \(\delta\in(0,1)\) such that
\[
 {(e-\delta\xi)}/{|e-\delta\xi|}\in\mathcal C
 \qquad
 \text{for every }e\in\mathcal C'
 \text{ and every }\xi\in\mathbb S^{m-1}.
\]
By \eqref{eq:cone-monotonicity-sign}, all directional derivatives in
\(\mathcal C\) are nonnegative in the working region.  At any point
where \(\nabla_xv\neq0\), apply this to
\(\xi=\nabla_xv/|\nabla_xv|\).  It follows, also trivially when the
gradient vanishes, that
\[
 \delta|\nabla_xv|\leq\partial_ev
 \qquad\text{for every }e\in\mathcal C'.
\]

Let \(\varepsilon\in(0,1/2]\) be chosen below.  Left invariance and
homogeneity give
\[
 Yv(Z)=(Yu)(P\circ\delta_rZ).
\]
By \cite[Proposition~6.7]{Bowman2025}, with the localization described
after Theorem~\ref{thm:bowman-input}, \(Yu\) vanishes continuously on
the regular free boundary. Since the closure of the set of centers
is compactly contained in the larger regular neighborhood, there is
a nondecreasing modulus \(\omega\), with
\(\omega(s)\to0\) as \(s\downarrow0\), such that
\[
 |Yu(Q)|\leq\omega(\dK(P,Q))
\]
uniformly for the boundary points \(P\) under consideration and
nearby points \(Q\in\Omega_u\). The unscaled points in the fixed working boxes have intrinsic distance
at most \(Cr\) from their respective centers \(P\), with \(C\)
independent of \(P\) and \(r\). Choose \(r_1\in(0,r_*]\) sufficiently
small that these boxes remain in the regular neighborhood and $\omega(Cr_1)\leq\varepsilon$. The rescaling identity for \(Yv\) therefore gives
\[
 |Yv(x,y_\pm,t_\pm)|\leq\varepsilon
 \qquad\text{for }x\in B_{4a}(x_A),\quad 0<r\leq r_1,
\]
uniformly in \(P\) and the sign. The initial radius \(r_1\) may depend
on \(\omega\). Only smallness below a fixed threshold is needed,
and no quantitative rate of vanishing is required.

Fix a sign and a direction \(e\in\mathcal C'\), and put
\[
 V(x)=v(x,y_\pm,t_\pm),\qquad
 F(x)=Yv(x,y_\pm,t_\pm),\qquad
 q(x)=\partial_eV(x).
\]
The solution is smooth in its positivity set by interior
hypoellipticity.  On the spatial ball \(B_{4a}(x_A)\), its equation gives
\begin{equation}
\label{eq:reference-slice-equation}
 \Delta_xV=1-F,\qquad
 \Delta_xq=-\partial_eF,\qquad
 \|F\|_{\mathrm L^\infty(B_{4a}(x_A))}\leq\varepsilon.
\end{equation}
In particular, \(\Delta_xV\geq1/2\) and
\(q\geq\delta|\nabla_xV|\geq0\).  With \(n_x\) denoting the outer
unit normal to \(B_a(x_A)\), the divergence theorem yields
\[
 \frac12|B_a|
 \leq\int_{B_a(x_A)}\Delta_xV\,\d x
 =\int_{\partial B_a(x_A)}\nabla_xV\cdot n_x\,\mathrm dS_x
 \leq\delta^{-1}|\partial B_a|
       \sup_{B_a(x_A)}q.
\]
Since \(|B_a|/|\partial B_a|=a/m\), this implies
\[
 \sup_{B_a(x_A)}q\geq\frac{\delta a}{2m}.
\]

To transfer this lower bound to the center, let \(h\) be the harmonic
function in \(B_{2a}(x_A)\) with boundary values \(q\).  The maximum
principle gives \(h\geq0\).  If \(G_{2a}\) is the Dirichlet Green
function for \(-\Delta_x\) in this ball, then
\eqref{eq:reference-slice-equation} and integration by parts give
\[
 q(x)-h(x)
 =-\int_{B_{2a}(x_A)}
       \partial_{e,\xi}G_{2a}(x,\xi)F(\xi)\,\d\xi.
\]
The elementary Green-function estimate for a ball,
\[
 \sup_{x\in B_{2a}(x_A)}
 \int_{B_{2a}(x_A)}
       |\nabla_\xi G_{2a}(x,\xi)|\,\d\xi
 \leq C_ma,
\]
therefore yields
\[
 \|q-h\|_{\mathrm L^\infty(B_{2a}(x_A))}
 \leq C_ma\varepsilon.
\]
The interior Harnack inequality for the nonnegative harmonic function
\(h\) now gives constants \(H_m,C_m'>0\), depending only on dimension,
such that
\[
 \frac{\delta a}{2m}
 \leq\sup_{B_a(x_A)}q
 \leq H_mh(x_A)+C_ma\varepsilon
 \leq H_mq(x_A)+C_m'a\varepsilon.
\]
Choose \(\varepsilon\leq\delta/(4mC_m')\), in addition to
\(\varepsilon\leq1/2\), and then fix \(r_1\) as above.  We conclude that
\[
 \partial_ev(A_{1,\Lambda}^{\pm})
 =q(x_A)
 \geq\frac{\delta a}{4mH_m}>0.
\]
Every constant and the choice of \(r_1\) are uniform in the center, the
direction, and the sign.  Thus \eqref{eq:uniform-reference-values} holds
with \(c_1=\delta a/(4mH_m)\) and the upper constant already obtained.
The balance and mutual comparability assertions follow by taking ratios
of these bounds.

For completeness, the same slice argument proves
\eqref{eq:cone-monotonicity-input}.  Let \(Z=(x,y,t)\) be a point of the
rescaled positivity set in the fixed working box, and put
\(\ell=x_m-\psi_{P,r}(x',y,t)>0\).  The spatial Lipschitz bound for the graph
gives \(a_*>0\), depending only on its Lipschitz constant, such that
\(B_{4a_*\ell}(x)\times\{(y,t)\}\) lies in the positivity set and in a fixed
enlargement of the working box.  Choose \(r_1\) so that \(|Yv|\leq\varepsilon\)
also on this enlargement.  Repeating the preceding divergence-theorem and
Green-function argument with radius \(a_*\ell\) gives
\(\partial_ev(Z)\geq c\ell\), uniformly for \(e\in\mathcal C'\).
On the other hand, integrating the spatial Hessian bound from the graph
point directly below \(Z\), where \(v=|\nabla_xv|=0\), gives
\(v(Z)\leq C\ell^2\).  Since \(\ell\) is bounded above on a fixed working box,
these inequalities imply \(v(Z)\leq C\partial_ev(Z)\).
The estimate is also immediate on the contact set, and proves
\eqref{eq:cone-monotonicity-input}.
\end{proof}

\begin{remark} Since
\[
 A_{r,\Lambda}^{\pm}(P)
 =P\circ\delta_r A_{1,\Lambda}^{\pm},
 \qquad
 \partial_eu_{P,r}(A_{1,\Lambda}^{\pm})
 =r^{-1}\partial_eu(A_{r,\Lambda}^{\pm}(P)),
\]
the bounds in \eqref{eq:uniform-reference-values} are equivalent to
\[
 c_1r
 \leq \partial_eu(A_{r,\Lambda}^{\pm}(P))
 \leq C_1r.
\]
Thus the directional derivatives are uniformly of order \(r\) at the
reference points. At each fixed scale, these values are mutually
comparable, with constants independent of \(P\), \(r\), \(e\), and the
choice of sign. In particular, the forward and backward reference
values are uniformly balanced. Similarly, \eqref{eq:cone-monotonicity-input} becomes
\[
 0\leq u\leq Cr\,\partial_eu
\]
on the corresponding working box in the original variables.
\end{remark}

We now verify the geometric hypothesis of
Theorem~\ref{thm:BHI-input}.  After replacing \(r_*\) by
\(\min\{r_*,r_1\}\), passing, if necessary, to a smaller regular patch,
and reducing \(r_*\) further, we may assume that every box
\(Q_{M,2s}(\widetilde P)\), with
\(\widetilde P\in\Gamma_u\cap Q_{M,2r}(P)\) and \(0<s\leq r\), remains in
the coordinate neighborhood on which
\eqref{eq:cylindrical-defect-decay} holds whenever \(P\) belongs to the
smaller patch and \(0<r\leq r_*\).  The uniformity of
\eqref{eq:cylindrical-defect-decay} with respect to the center then gives
\[
 \operatorname{cyl}_{2,\widetilde P,s}(\psi)
 \leq Cs^{1/2}
\]
for every
\[
 \widetilde P\in\Gamma_u\cap Q_{M,2r}(P),
 \qquad
 0<s\leq r.
\]
Consequently, the deviation quantity required in
Theorem~\ref{thm:BHI-input} satisfies
\begin{equation}
\label{eq:uniform-defect-smallness}
 \mathfrak D(P,r)
 :=
 \sup_{\substack{
 \widetilde P\in\Gamma_u\cap Q_{M,2r}(P)\\
 0<s\leq r}}
 \operatorname{cyl}_{2,\widetilde P,s}(\psi)
 \leq Cr^{1/2}
 \leq Cr_*^{1/2}.
\end{equation}

Let \(H_{\mathrm{ref}}\geq1\) be a uniform balance constant supplied by
Lemma~\ref{lem:uniform-reference-values}.  Fix the parameter
\(\delta\in(0,1/8)\) used in the envelope construction in the proof of
Proposition~\ref{prop:inhomogeneous-contraction} below, and choose \(H_*\geq1\)
so that
\[
 H_*
 \geq
 3H_{\mathrm{ref}}{(1+\delta)}/{\delta}.
\]
This choice of \(H_*\) bounds the balance constants of the reference
derivatives.  The comparison estimates established in
Proposition~\ref{prop:inhomogeneous-contraction} below show that, after a
further reduction of \(r_*\), it also bounds the balance constants of the
\(\K\)-harmonic replacements and the adjusted envelopes used below.
In particular, \(H_*\) depends only on the quantitative data of the fixed
regular neighborhood and is independent of the boundary point, the scale,
and the particular functions under consideration.

After reducing \(r_*\) once more, without changing its notation, we may
assume that
\begin{equation}
\label{eq:defect-small}
 Cr_*^{1/2}
 \leq
 \eta_0(m,M,H_*).
\end{equation}
Together with \eqref{eq:uniform-defect-smallness}, this shows that the
geometric smallness hypothesis of Theorem~\ref{thm:BHI-input} holds
uniformly at every point of the smaller regular patch and every scale
\(0<r\leq r_*\).  Since any function that is \(H\)-balanced for some
\(H\leq H_*\) is also \(H_*\)-balanced, all boundary Harnack constants may
be chosen independently of the boundary point, the scale, and the
particular pair of functions.

\section{Derivative equations and non-degenerate cones}
\label{sec:derivatives}

We first identify the equations satisfied by the derivatives that determine
the intrinsic normal.  Although these derivatives vanish on the free
boundary and are positive in suitable cones, they are not
\(\K\)-harmonic.  The purpose of this section is to quantify the resulting
error under blow-up and to isolate a uniformly non-degenerate family of
directional derivatives.

In \(\Omega_u\), where \(\K u=1\), the commutator identity
\([\K,\partial_{x_i}]=-\partial_{y_i}\) gives
\begin{equation}
\label{eq:derivative-equations}
 \K u_{x_i}=-u_{y_i},
 \qquad i=1,\ldots,m.
\end{equation}
The source terms are bounded by
\eqref{eq:bowman-solution-bounds}.  Recall that the blow-up of \(u\) at
\(P\in\Gamma_u\) and scale \(r>0\) is
\begin{equation}
\label{eq:blowup}
 u_{P,r}(Z)
 =
 r^{-2}u(P\circ\delta_r Z).
\end{equation}
The scaling of the \(y\)-derivatives gives
\[
 (u_{P,r})_{y_i}(Z)
 =
 r\,u_{y_i}(P\circ\delta_r Z).
\]
Consequently, in the rescaled positivity set,
\begin{equation}
\label{eq:scaled-derivative-equations}
 \K (u_{P,r})_{x_i}
 =
 -(u_{P,r})_{y_i}
 =
 -r\,u_{y_i}(P\circ\delta_r Z),
 \qquad i=1,\ldots,m.
\end{equation}
Thus, on every fixed rescaled working box contained in the regular
neighborhood,
\begin{equation}
\label{eq:scaled-derivative-source-bound}
 \|\K (u_{P,r})_{x_i}\|_{\mathrm L^\infty}
 +
 \|(u_{P,r})_{y_i}\|_{\mathrm L^\infty}
 \leq Cr.
\end{equation}
In particular, the failure of the diffusion derivatives to be
\(\K\)-harmonic is of lower order under the obstacle-problem blow-up.

The intrinsic first-order regularity established in
\cite{Bowman2025} implies that \(\nabla_xu\) has a continuous
representative in the intrinsic variables.  Let
\(P=(x_0,y_0,t_0)\in\Gamma_u\).  Since \(u\geq0\) and
\(u(x_0,y_0,t_0)=0\), the differentiable function
\(x\mapsto u(x,y_0,t_0)\) has a local minimum at \(x_0\).  It follows that
\begin{equation}
\label{eq:gradient-zero-free-boundary}
 \nabla_xu(P)=0.
\end{equation}
Hence every diffusion derivative extends continuously by zero to the free
boundary.  The following lemma gives a quantitative form of this
vanishing.

\begin{lemma}
\label{lem:gradient-vanishing}
There exists \(C\geq 1\) such that, for every
\(Z\in\Omega_u\cap Q_{M,r_*}(P_0)\), one has
\begin{equation}
\label{eq:gradient-linear-upper}
 |\nabla_xu(Z)|
 \leq C\,\dK(Z,\Gamma_u).
\end{equation}
Hence, every derivative \(u_{x_i}\) extends continuously to
the regular free boundary by setting \(u_{x_i}=0\) on \(\Gamma_u\).
\end{lemma}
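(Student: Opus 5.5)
The plan is to integrate the spatial Hessian bound from a nearby free-boundary point. The difficulty is that the intrinsic distance \(\dK(Z,\Gamma_u)\) may be realized by a point that is not on the same \((y,t)\)-slice as \(Z\). The graph structure of Theorem~\ref{thm:bowman-input}(ii) lets us reduce to the vertical distance on that slice.

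Fix \(Z=(x',x_m,y,t)\in\Omega_u\cap Q_{M,r_*}(P_0)\) and put \(\ell=x_m-\psi(x',y,t)>0\). Let \(Z_0=(x',\psi(x',y,t),y,t)\in\Gamma_u\) be the graph point directly below \(Z\). After shrinking \(r_*\), the vertical segment \([Z_0,Z]\) lies in \(Q_{M,3r_*}(P_0)\). By \eqref{eq:gradient-zero-free-boundary}, \(\nabla_xu(Z_0)=0\). The bound \(\|D_x^2u\|_{\mathrm L^\infty}\leq C\) from \eqref{eq:bowman-solution-bounds}, together with the spatial \(C^{1,1}\) regularity on the fixed slice, gives
\[
 |\nabla_xu(Z)|\leq C\ell .
\]
The key step is therefore to show that
\[
 \ell\leq C\,\dK(Z,\Gamma_u).
\]
This is a standard property of intrinsic Lipschitz graphs. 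Take \(\widetilde P\in\Gamma_u\) nearly realizing \(\rho=\dK(Z,\Gamma_u)\), with \(\rho\lesssim\ell\) since \(Z_0\in\Gamma_u\) and \(\dK(Z,Z_0)\simeq\ell\). Translate by the group law so that \(\widetilde P\) is the origin. Then the translated point \(\widetilde P^{-1}\circ Z=(\xi',\xi_m,\eta,s)\) has \(|\xi|+|\eta|^{1/3}+|s|^{1/2}\leq C\rho\).

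On the other side, the intrinsic Lipschitz condition \eqref{eq:intrinsic-lipschitz-condition} with \(\psi_{\widetilde P}(0)=0\) gives
\[
 |\psi_{\widetilde P}(\xi',\eta,s)|\leq M D_{\psi_{\widetilde P}}\bigl((\xi',\eta,s),0\bigr)\leq CM\rho .
\]
Left translation preserves the vertical \(x_m\)-direction and vertical distances on a slice, because \(x_m\) transforms by addition. Hence
\[
 \ell=\xi_m-\psi_{\widetilde P}(\xi',\eta,s)\leq C\rho+CM\rho .
\]
The main point to check is that the \(y_m\) correction term in \eqref{eq:boundary-quasidistance} is indeed controlled by \(\rho\) after translation. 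I expect this to be routine: the translated coordinates are exactly the group-law difference, so each term of \(D\) is bounded by the \(\K\)-quasi-norm of \(\widetilde P^{-1}\circ Z\) up to constants, using the quasi-triangle inequality with the point \((\xi',\psi_{\widetilde P}(\xi',\eta,s),\eta,s)\).

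Combining the two displays gives \eqref{eq:gradient-linear-upper}. The continuous extension follows at once. If \(Z\to P\in\Gamma_u\), then \(\dK(Z,\Gamma_u)\leq\dK(Z,P)\to0\), so \(u_{x_i}(Z)\to0\). On the contact set \(\nabla_xu=0\) identically, by \eqref{eq:gradient-zero-free-boundary} applied at interior contact points, or trivially there. Hence setting \(u_{x_i}=0\) on \(\Gamma_u\) yields a continuous function.
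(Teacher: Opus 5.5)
Your proof is correct and matches the paper's argument: take the graph point directly below \(Z\), use \(\nabla_xu=0\) there and integrate the Hessian bound from \eqref{eq:bowman-solution-bounds}, then bound the vertical height \(\ell\) by \(C\,\dK(Z,\Gamma_u)\) using the intrinsic Lipschitz condition. The paper only asserts this last comparison, while you derive it by translating a near-minimizer \(\widetilde P\) to the origin; the \(y_m\) correction term you flagged causes no difficulty, since with base point \(0\) and \(\psi_{\widetilde P}(0)=0\) it reduces to \(|\eta_m|^{1/3}\).
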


\begin{proof}
Set \(d=\dK(Z,\Gamma_u)\), and let \(P\in\Gamma_u\) be the graph point
having the same \((x',y,t)\)-coordinates as \(Z\).  Write
\(\ell=x_m(Z)-x_m(P)>0\).  The vertical-distance comparison following
from \eqref{eq:intrinsic-lipschitz-condition} gives \(\ell\leq Cd\).
After reducing the coordinate neighborhood if necessary, the vertical
segment from \(P\) to \(Z\) lies in the larger neighborhood where
\eqref{eq:bowman-solution-bounds} holds.  Since
\(\nabla_xu(P)=0\), the spatial \(C^{1,1}\) estimate gives directly
\[
 |\nabla_xu(Z)|
 =|\nabla_xu(Z)-\nabla_xu(P)|
 \leq \|D_x^2u\|_{\mathrm L^\infty(Q_{M,3r_*}(P_0))}\ell
 \leq Cd.
\]
This proves \eqref{eq:gradient-linear-upper} and the asserted continuous
vanishing at the free boundary.
\end{proof}

We work in the coordinates fixed in
Theorem~\ref{thm:bowman-input}.  Choose \(a_0\in(0,1)\), depending only on
the aperture of the common monotonicity cone, so that the directions
\begin{equation}
\label{eq:cone-directions}
 e_j^\pm
 =
 {(e_m\pm a_0e_j)}/{\sqrt{1+a_0^2}},
 \qquad j=1,\ldots,m-1,
\end{equation}
together with \(e_m\), belong to a closed spherical cone compactly
contained in the common monotonicity cone.  Here
\(e_1,\ldots,e_m\) are the standard coordinate vectors in the
\(x\)-variables.  Define
\begin{equation}
\label{eq:positive-combinations}
 q_0:=u_{x_m},
 \qquad
 q_j^\pm:=u_{x_m}\pm a_0u_{x_j},
 \qquad j=1,\ldots,m-1.
\end{equation}
Since
\[
 q_0=\partial_{e_m}u,
 \qquad
 q_j^\pm
 =
 \sqrt{1+a_0^2}\,\partial_{e_j^\pm}u,
\]
these functions are the directional derivatives associated with the fixed
subcone chosen above.  By Lemma~\ref{lem:gradient-vanishing}, they extend
continuously by zero to the free-boundary graph.

For \(P\) in the chosen free-boundary patch and \(0<r\leq r_1\), define
the corresponding derivatives of the rescaled solution by
\begin{equation}
\label{eq:scaled-positive-derivatives}
 q_{0,P,r}:=(u_{P,r})_{x_m},
 \qquad
 q_{j,P,r}^\pm
 :=
 (u_{P,r})_{x_m}\pm a_0(u_{P,r})_{x_j},
 \qquad j=1,\ldots,m-1,
\end{equation}
and set
\begin{equation}
\label{eq:scaled-derivative-family}
 \mathscr F_{P,r}
 :=
 \left\{
 q_{0,P,r},q_{j,P,r}^+,q_{j,P,r}^-:
 j=1,\ldots,m-1
 \right\}.
\end{equation}
The scaling identity
\[
 (u_{P,r})_{x_i}(Z)
 =
 r^{-1}u_{x_i}(P\circ\delta_rZ)
\]
relates the functions in \(\mathscr F_{P,r}\) to those in
\eqref{eq:positive-combinations}.

Applying \eqref{eq:cone-monotonicity-input} to \(e_m\) and \(e_j^\pm\),
and absorbing the factor \(\sqrt{1+a_0^2}\) into the constant, gives
\begin{equation}
\label{eq:u-controlled-by-q}
 0<u_{P,r}\leq Cq,
 \qquad q\in\mathscr F_{P,r},
\end{equation}
in the positivity portion of the fixed rescaled working box.
After reducing the regular free-boundary patch, this box and the constant
\(C\) can be chosen independently of \(P\) and \(r\).  In particular,
every function in \(\mathscr F_{P,r}\) is strictly positive in that
portion of the rescaled positivity set.
Scaling back shows that the functions in
\eqref{eq:positive-combinations} are strictly positive in the corresponding
local positivity region and that
\[
 u\leq Cr\widetilde q,
\]
where \(\widetilde q\) is the corresponding directional derivative of the
unscaled solution.

Lemma~\ref{lem:uniform-reference-values}, applied to the normalized
directions \(e_m\) and \(e_j^\pm\), gives constants
\(c_{\mathrm{ref}},C_{\mathrm{ref}}>0\) such that
\begin{equation}
\label{eq:scaled-reference-value-bounds}
 c_{\mathrm{ref}}
 \leq
 q(A_{1,\Lambda}^{\varepsilon})
 \leq
 C_{\mathrm{ref}},
 \qquad
 q\in\mathscr F_{P,r},
 \qquad
 \varepsilon\in\{+,-\}.
\end{equation}
Here the fixed factor \(\sqrt{1+a_0^2}\) has been absorbed into the
constants.  The bounds are uniform with respect to \(P\) in the chosen
free-boundary patch and \(0<r\leq r_1\).  In particular, with
\(H=C_{\mathrm{ref}}/c_{\mathrm{ref}}\), every
\(q\in\mathscr F_{P,r}\) is \(H\)-balanced with respect to the origin
at scale \(1\),
\begin{equation}
\label{eq:reference-balance-derivatives}
 H^{-1}
 \leq
 \frac{q(A_{1,\Lambda}^{+})}
      {q(A_{1,\Lambda}^{-})}
 \leq H.
\end{equation}
Moreover, \eqref{eq:scaled-reference-value-bounds} shows directly that all
reference values associated with functions in \(\mathscr F_{P,r}\) are
mutually comparable, with constants depending only on the quantitative
data of the regular free-boundary neighborhood.

\section{Inhomogeneous boundary oscillation}
\label{sec:inhomogeneous}

We now establish the perturbative oscillation estimate that transfers the
homogeneous boundary Harnack inequality to the inhomogeneous derivative
equations \eqref{eq:derivative-equations}.  The key step is to replace the
directional derivatives by \(\K\)-harmonic functions and to control the
replacement error using the scale decay in
\eqref{eq:scaled-derivative-source-bound}.

Unless otherwise stated, a weak or energy solution of
\begin{equation}
\label{eq:general-inhomogeneous-equation}
 \K w=f
 \quad\text{in an open set }G
\end{equation}
is a function
\[
 w\in\mathrm L^2_{\mathrm{loc}}(G),
 \qquad
 \nabla_xw\in\mathrm L^2_{\mathrm{loc}}(G),
\]
such that
\begin{equation}
\label{eq:energy-solution-definition}
 \int_G
 \left[
 \nabla_xw\cdot\nabla_x\varphi
 +
 w\bigl(x\cdot\nabla_y\varphi-\partial_t\varphi\bigr)
 \right]
 \,\d x\d y\d t
 =
 -\int_G f\varphi\,\d x\d y\d t
\end{equation}
for every \(\varphi\in C_0^\infty(G)\).  When \(f=0\), we refer to \(w\)
as a \(\K\)-harmonic energy solution, or simply \(\K\)-harmonic.  The
formulation
\eqref{eq:energy-solution-definition} is equivalent to
\(\K w=f\) in the sense of distributions.  In particular, a
\(\K\)-harmonic energy solution is smooth in the interior by
hypoellipticity.  Energy subsolutions and supersolutions are defined by the
corresponding distributional inequalities.  With our sign convention,
\(\K w\geq f\) means that the left-hand side of
\eqref{eq:energy-solution-definition} is at most
\[
 -\int_G f\varphi\,\d x\d y\d t
\]
for every nonnegative \(\varphi\in C_0^\infty(G)\), the inequality is
reversed when \(\K w\leq f\).

In the Dirichlet arguments below, the prescribed data are restrictions
of continuous functions on the closure of the working domain.  We use the
Perron solution and its continuous attainment of these data on the regular
graph portion.  The following convention makes precise the boundary
conditions needed for $\mathcal K$-harmonic replacement.

\subsection{The Kolmogorov boundary and the Dirichlet problem}
\label{subsec:Kolmogorov-Dirichlet}

Since \(\K\) is directed in time, a solution need not attain arbitrary
continuous data on the entire topological boundary of a space-time domain.
For a bounded open set \(G\) and \(g\in C(\partial G)\), let \(H_Gg\)
denote the Perron-Wiener solution for \(\K\).  Continuous boundary data
are resolutive, so this is a well-defined bounded \(\K\)-harmonic function.
The map \(g\mapsto H_Gg\) is linear, preserves order, and preserves
constants, see \cite[Sections~3-4]{Kogoj2017}.

A boundary point \(W\in\partial G\) is regular if
\(H_Gg(Z)\to g(W)\) as \(G\ni Z\to W\) for every \(g\in C(\partial G)\).
In this paper we use the notation
\begin{equation}
\label{eq:Kolmogorov-boundary}
 \partial_{\K}G
 =
 \{W\in\partial G:W\text{ is regular for the Dirichlet problem for }\K\}.
\end{equation}
Thus boundary conditions written on \(\partial_{\K}G\) refer to
pointwise attainment at regular points.  The solution itself is selected
by the Perron construction with the specified continuous data on
\(\partial G\).

We apply this convention to bounded intrinsic graph truncations centered
at graph-boundary points,
\begin{equation}
\label{eq:bounded-box-truncation}
 G=\Omega_\psi\cap Q_{M,\rho}(P),
 \qquad
 P\in\Delta_\psi,\qquad 1\leq\rho<2.
\end{equation}
In particular, this class includes the rescaled positivity-set
truncations introduced in the next subsection.  Every point of the graph
portion \(\Delta_\psi\cap Q_{M,\rho}(P)\) is regular. The intrinsic
Lipschitz geometry supplies the exterior cones used in the barrier
criterion, see \cite[Section~6]{LitsgardNystrom2022}.  The initial time
face and the non-characteristic diffusion faces are also regular away
from their edges.  On the artificial \(y\)-faces, the direction of the
drift determines which portions admit pointwise Dirichlet data.  We do not
require a classification of the artificial corners or of the characteristic
parts of these faces.

For \(g\in C(\overline G)\), the notation
\begin{equation}
\label{eq:Kolmogorov-Dirichlet-problem}
 \begin{cases}
  \K h=0 & \text{in }G,\\
  h=g    & \text{on }\partial_{\K}G
 \end{cases}
\end{equation}
will always mean \(h=H_G(g|_{\partial G})\).  This function is smooth in
\(G\), is a local energy solution, and attains \(g\) continuously on the
graph portion.  This is the only boundary continuity required below.

We record the comparison property in the form used in the proofs.  If
\(z\in C(\overline G)\) is an energy subsolution and
\begin{equation}
\label{eq:weak-comparison-hypothesis}
 \K z\geq0\quad\text{in }G,
 \qquad
 z\leq g\quad\text{on }\partial G,
\end{equation}
then the Perron comparison principle gives
\begin{equation}
\label{eq:weak-comparison-conclusion}
 z\leq H_G(g|_{\partial G})\quad\text{in }G.
\end{equation}
For a continuous energy supersolution with \(\K z\leq0\) and
\(z\geq g\) on \(\partial G\), the inequality is reversed.
In particular, continuous subsolutions and supersolutions that bound the
prescribed data on \(\partial G\) also bound their $\mathcal K$-harmonic replacement
throughout \(G\).  Order preservation gives
\(\|H_Gg\|_{\mathrm L^\infty(G)}\leq\|g\|_{\mathrm L^\infty(\partial G)}\).

\subsection{\texorpdfstring{\(\K\)-harmonic}{K-harmonic} replacement}
\label{subsec:harmonic-replacement}

For later use, define the truncated positivity domain
\begin{equation}
\label{eq:truncated-positivity-domain}
 D_s(P)
 =
 \Omega_u\cap Q_{M,s}(P).
\end{equation}
If \(P\in\Gamma_u\) and \(r>0\), let
\begin{equation}
\label{eq:rescaled-positivity-domain}
 \Omega_{P,r}
 =
 \{u_{P,r}>0\},
 \qquad
 D_s^{P,r}
 =
 \Omega_{P,r}\cap Q_{M,s}(0).
\end{equation}
These domains are related by
\begin{equation}
\label{eq:rescaled-truncation-identity}
 D_s^{P,r}
 =
 \delta_{1/r}\bigl(P^{-1}\circ D_{rs}(P)\bigr).
\end{equation}
For \(P\) in the regular free-boundary patch and \(r\leq r_*\), the set
\(\Omega_{P,r}\) is an intrinsic graph domain.  Hence
\(D_s^{P,r}\) is one of the bounded graph truncations covered by
Subsection~\ref{subsec:Kolmogorov-Dirichlet}.  By the translation and
dilation invariance of \(\K\), together with the
normalization in \eqref{eq:blowup}, we have
\begin{equation}
\label{eq:rescaled-obstacle-equation}
 \K u_{P,r}=1
 \qquad\text{in }\Omega_{P,r}.
\end{equation}

\begin{lemma}
\label{lem:harmonic-replacement}
Let \(P\) belong to the chosen regular free-boundary patch, let
\(0<r\leq r_*\), and let \(\rho\in[1,2)\).  We work in the coordinates
centered at \(P\) and rescaled by \(r\), and set
\begin{equation}
\label{eq:harmonic-replacement-domain}
 G
 =
 D_\rho^{P,r}
 =
 \Omega_{P,r}\cap Q_{M,\rho}(0).
\end{equation}
Suppose that \(w,q_0\in C(\overline G)\) are energy solutions satisfying
\(q_0>0\) in \(G\), and
\begin{equation}
\label{eq:q-equation}
 \K w=f,
 \qquad
 \K q_0=f_0
 \qquad\text{in }G,
\end{equation}
where, for some \(\eps\geq0\),
\begin{equation}
\label{eq:q-source-smallness}
 \|f\|_{\mathrm L^\infty(G)}
 +
 \|f_0\|_{\mathrm L^\infty(G)}
 \leq\eps.
\end{equation}
Assume also that
\begin{equation}
\label{eq:barrier-domination}
 0\leq u_{P,r}\leq C_0q_0
 \qquad\text{in }G.
\end{equation}
Let \(w^h=H_G(w|_{\partial G})\) be the Perron solution of
\begin{equation}
\label{eq:harmonic-replacement-problem}
 \begin{cases}
  \K w^h=0 & \text{in }G,\\
  w^h=w    & \text{on }\partial_{\K}G,
 \end{cases}
\end{equation}
with the convention in Subsection~\ref{subsec:Kolmogorov-Dirichlet}.
Then there exist
\[
 \eps_0=\eps_0(C_0)>0,
 \qquad
 C=C(C_0)<\infty,
\]
such that, if \(0\leq\eps\leq\eps_0\), then
\begin{equation}
\label{eq:harmonic-replacement-error}
 |w-w^h|
 \leq
 C\eps q_0
 \qquad\text{in }G.
\end{equation}
In particular, if \(w=q_0\) and \(q_0^h\) denotes the
\(\K\)-harmonic replacement of \(q_0\), then
\begin{equation}
\label{eq:replacement-relative-error}
 |q_0-q_0^h|
 \leq
 C\eps q_0
 \qquad\text{in }G.
\end{equation}
Consequently, if \(C\eps\leq1/2\), then
\begin{equation}
\label{eq:replacement-comparable}
 \frac12q_0
 \leq
 q_0^h
 \leq
 \frac32q_0
 \qquad\text{in }G.
\end{equation}
\end{lemma}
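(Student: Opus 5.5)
The plan is a barrier-plus-Perron-comparison argument, as sketched in the introduction. I will build a supersolution out of \(q_0\) and \(u_{P,r}\) whose \(\K\)-image is uniformly negative, and use it to dominate the difference \(w-w^h\).

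\emph{Step 1: the barrier.} For \(A>0\) to be fixed, set
\[
 B=Aq_0-u_{P,r}.
\]
In \(G\) we have \(\K u_{P,r}=1\) by \eqref{eq:rescaled-obstacle-equation}, and \(\K q_0=f_0\) with \(|f_0|\le\eps\). Hence \(\K B=Af_0-1\le A\eps-1\). I fix \(A=2C_0\) and take \(\eps_0=1/(4C_0)\), so that \(\K B\le-1/2\) whenever \(\eps\le\eps_0\). By \eqref{eq:barrier-domination}, \(u_{P,r}\le C_0q_0\), which gives
\[
 B\ge 2C_0q_0-C_0q_0=C_0q_0\ge0,
 \qquad
 B\le Aq_0 .
\]
So \(B\simeq q_0\) in \(G\), with constants depending only on \(C_0\). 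Since \(u_{P,r}\) and \(q_0\) lie in \(C(\overline G)\), so does \(B\), and it is an energy supersolution because both pieces are energy solutions.

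\emph{Step 2: comparison.} Let \(d=w-w^h\). In \(G\) it satisfies \(\K d=f\), with \(|f|\le\eps\). I then consider the two functions
\[
 z_\pm=\pm d-2\eps B.
\]
Each satisfies \(\K z_\pm\le\eps+... \) — more precisely \(\K z_\pm=\pm f-2\eps\,\K B\ge-\eps+\eps=0\), so \(z_\pm\) are subsolutions. On \(\partial G\) the Perron solution has the data \(w\), so formally \(d=0\) there and \(z_\pm\le0\) because \(B\ge0\).

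The main obstacle is that \(w^h\) need not attain \(w\) at non-regular points of \(\partial G\), so \(z_\pm\le0\) cannot be checked pointwise there. I will avoid this by never comparing \(w^h\) directly and instead using linearity and order preservation of \(H_G\). Apply the comparison principle \eqref{eq:weak-comparison-hypothesis}--\eqref{eq:weak-comparison-conclusion} to the continuous functions \(w\pm2\eps B\):
\begin{itemize}
\item \(w-2\eps B\) is a continuous subsolution, since \(\K(w-2\eps B)\ge-\eps+\eps=0\), and its boundary values are at most \(w\). Hence \(w-2\eps B\le H_Gw=w^h\).
\item \(w+2\eps B\) is a continuous supersolution with boundary values at least \(w\). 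Hence \(w+2\eps B\ge w^h\).
\end{itemize}
Together these give \(|w-w^h|\le2\eps B\le2A\eps q_0=4C_0\eps q_0\), which is \eqref{eq:harmonic-replacement-error} with \(C=4C_0\). Only continuity on \(\overline G\) and the energy sub/supersolution property are used, matching the hypotheses recorded in Subsection~\ref{subsec:Kolmogorov-Dirichlet}. I would still double-check that the Perron comparison, as cited from Kogoj, applies to continuous energy subsolutions in this form.

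\emph{Step 3: the consequences.} Taking \(w=q_0\) gives \eqref{eq:replacement-relative-error}. If \(C\eps\le1/2\), that estimate reads \(|q_0-q_0^h|\le\tfrac12q_0\), which rearranges to \(\tfrac12q_0\le q_0^h\le\tfrac32q_0\), i.e.\ \eqref{eq:replacement-comparable}.
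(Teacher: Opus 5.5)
Your proposal is correct and is essentially the paper's proof. Both use the barrier \(B=Aq_0-u_{P,r}\) with \(\K B\le -1/2\), then apply Perron comparison to the continuous sub/supersolutions \(w\mp 2\eps B\) against \(H_G w\), which gives \(|w-w^h|\le 2\eps B\le 2A\eps q_0\). The only differences are immaterial: you take \(A=2C_0\) (so \(C=4C_0\)) where the paper takes \(A=C_0+1\) (so \(C=2(C_0+1)\)), and your abandoned first attempt with \(z_\pm=\pm d-2\eps B\) can simply be deleted.
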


\begin{proof}
Set \(v=w-w^h\).  Choose \(A>C_0\), for example \(A=C_0+1\), and define
\begin{equation}
\label{eq:harmonic-replacement-barrier}
 B=Aq_0-u_{P,r}.
\end{equation}
By \eqref{eq:barrier-domination},
\begin{equation}
\label{eq:barrier-size}
 0\leq B\leq Aq_0
 \qquad\text{in }G.
\end{equation}
Moreover, since \(G\subset\Omega_{P,r}\), equations
\eqref{eq:rescaled-obstacle-equation} and \eqref{eq:q-equation} give
\begin{equation}
\label{eq:barrier-equation}
 \K B=Af_0-1.
\end{equation}
Choose \(\eps_0(C_0)\leq1/(2A)\).  It follows from
\eqref{eq:q-source-smallness} that
\begin{equation}
\label{eq:barrier-superharmonicity}
 \K B\leq-1/2
 \qquad\text{in }G.
\end{equation}
Using \(|f|\leq\eps\) and
\eqref{eq:barrier-superharmonicity}, we obtain
\begin{align}
 \K(w-2\eps B)
 &=f-2\eps\K B
 \geq-\eps+\eps=0,
\label{eq:upper-comparison-equation}\\
 \K(w+2\eps B)
 &=f+2\eps\K B
 \leq\eps-\eps=0.
\label{eq:lower-comparison-equation}
\end{align}
The functions \(w\pm2\eps B\) are continuous on \(\overline G\), and
\(B\geq0\) there.  Hence they bound the prescribed data \(w\) on
\(\partial G\).  Perron comparison applied to
\eqref{eq:upper-comparison-equation} and
\eqref{eq:lower-comparison-equation} gives
\(w-2\eps B\leq w^h\leq w+2\eps B\), or equivalently,
\begin{equation}
\label{eq:v-barrier-bound}
 -2\eps B\leq v\leq2\eps B
 \qquad\text{in }G.
\end{equation}
Together with \eqref{eq:barrier-size}, this gives
\[
 |w-w^h|=|v|
 \leq2A\eps q_0,
\]
which proves \eqref{eq:harmonic-replacement-error} with \(C=2A\).
Taking \(w=q_0\) gives \eqref{eq:replacement-relative-error}, and
\eqref{eq:replacement-comparable} follows immediately when
\(C\eps\leq1/2\).
\end{proof}

\subsection{The quotient contraction}

\begin{proposition}
\label{prop:inhomogeneous-contraction}
Let \(A,C_0,H\geq1\) and \(L\geq0\).  After possibly reducing \(r_*\) by
an amount depending only on \(H\) and the quantitative geometric data,
there exist \(\theta,\vartheta\in(0,1)\) and \(C<\infty\), depending only
on \(A\), \(C_0\), \(H\), and the same geometric data, such that the
following holds.  Let \(P\) belong to the regular free-boundary
neighborhood and let
\(0<r\leq r_*\).  Suppose that
\[
 q_0,q_1\in C(\overline{D_{2r}(P)})
\]
are energy solutions such that \(q_0>0\) and \(q_1\geq0\) in
\(D_{2r}(P)\), and that both functions vanish continuously on the
free-boundary graph portion.  Assume that
\begin{equation}
\label{eq:inhomogeneous-system}
 \K q_i=f_i,
 \qquad
 \|f_i\|_{\mathrm L^\infty(D_{2r}(P))}\leq L,
 \qquad i=0,1.
\end{equation}
Suppose that \(q_0\) is \(H\)-balanced with respect to \(P\) at scale
\(r/2\), in the sense that
\begin{equation}
\label{eq:inhomogeneous-reference-balance}
 H^{-1}
 \leq
 \frac{q_0(A_{r/2,\Lambda}^{+}(P))}
      {q_0(A_{r/2,\Lambda}^{-}(P))}
 \leq H.
\end{equation}
Finally, assume that
\begin{equation}
\label{eq:q1-q0-comparable}
 0\leq q_1\leq Aq_0,
 \qquad
 0\leq u\leq C_0rq_0
 \qquad\text{in }D_{2r}(P).
\end{equation}
Then
\begin{equation}
\label{eq:inhomogeneous-contraction}
 \operatorname{osc}_{D_{\theta r}(P)}
 \frac{q_1}{q_0}
 \leq
 \vartheta
 \operatorname{osc}_{D_r(P)}
 \frac{q_1}{q_0}
 +
 CLr.
\end{equation}
The constants are uniform for \(P\) in the indicated regular neighborhood
and \(0<r\leq r_*\).
\end{proposition}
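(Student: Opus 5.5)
We first rescale by \(r\) around \(P\), passing to \(u_{P,r}\), and to \(\widehat q_i(Z)=r^{-1}q_i(P\circ\delta_rZ)\).

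\emph{Rescaling.} The quotient \(q_1/q_0\) and its oscillations are unchanged by this normalization, and \(D_{sr}(P)\) becomes \(D_s^{P,r}\). By left invariance and homogeneity, \(\K\widehat q_i=r\,f_i(P\circ\delta_r\cdot)\). Hence the sources are bounded by \(\eps:=Lr\) on \(G=D_2^{P,r}\), or on \(D_\rho^{P,r}\) for \(\rho\) slightly below \(2\) if the lemma requires \(\rho<2\). The hypothesis \(u\le C_0rq_0\) becomes \(u_{P,r}\le C_0\widehat q_0\). Balance of \(q_0\) at scale \(r/2\) becomes balance of \(\widehat q_0\) at scale \(1/2\).

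\emph{Small-source case.} Assume first that \(\eps\le\eps_0(C_0)\) and \(C\eps\le 1/2\). Apply Lemma~\ref{lem:harmonic-replacement} to \(w=\widehat q_1\) and to \(w=\widehat q_0\). This gives \(|\widehat q_1-q_1^h|\le C\eps\widehat q_0\), and \(\tfrac12\widehat q_0\le q_0^h\le\tfrac32\widehat q_0\). Both replacements vanish continuously on the graph portion, since the data do and graph points are regular. Writing
\[
 \frac{\widehat q_1}{\widehat q_0}-\frac{q_1^h}{q_0^h}
 =\frac{\widehat q_1-q_1^h}{\widehat q_0}
 +\frac{q_1^h}{q_0^h}\cdot\frac{q_0^h-\widehat q_0}{\widehat q_0},
\]
and using \(q_1^h\le A\,q_0^h\) up to \(O(\eps)\) (by comparison, \(q_1^h\le AH_G\widehat q_0\le A(1+C\eps)\widehat q_0\)), we get
\[
 \Bigl|\frac{\widehat q_1}{\widehat q_0}-\frac{q_1^h}{q_0^h}\Bigr|\le C(A,C_0)\eps \quad\text{in } G.
\]
Thus every oscillation of the two quotients on any subdomain differs by at most \(C\eps\).

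\emph{Homogeneous contraction.} Apply the contraction \eqref{eq:homogeneous-contraction} to \(v_1=q_0^h\) and \(v_2=q_1^h\), with \(r=1\). The geometric hypothesis \eqref{eq:BHI-defect-hypothesis} holds by \eqref{eq:defect-small}.

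Balance of \(v_1\): at the reference points \(A^\pm_{1/2,\Lambda}\) its values are within a factor \(3\) of those of \(\widehat q_0\), so it is \(3H\)-balanced.

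Balance of an envelope: let \(\omega=\osc_{D_1}(q_1^h/q_0^h)\), with infimum \(m\) and supremum \(M\). Set \(E^-=q_1^h-mq_0^h\) and \(E^+=Mq_0^h-q_1^h\). Their sum is \(\omega q_0^h\), so at each reference point at least one of them is \(\ge\tfrac12\omega q_0^h\). Adding \(\delta\omega q_0^h\), for the fixed \(\delta\) from Section~\ref{subsec:uniform-reference-values}, to the selected envelope \(E\) gives
\[
 \delta\omega q_0^h\le E+\delta\omega q_0^h\le(1+\delta)\omega q_0^h
\]
at both reference points. Hence it is balanced with a constant of order \(3H(1+\delta)/\delta\le H_*\), and its quotient oscillation is unchanged. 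The homogeneous estimate then gives
\[
 \osc_{D_\theta}\frac{q_1^h}{q_0^h}\le\vartheta\,\osc_{D_1}\frac{q_1^h}{q_0^h}.
\]
Transferring back with the \(C\eps\) errors gives
\[
 \osc_{D_\theta}\frac{\widehat q_1}{\widehat q_0}\le\vartheta\,\osc_{D_1}\frac{\widehat q_1}{\widehat q_0}+(1+\vartheta)C\eps ,
\]
which is \eqref{eq:inhomogeneous-contraction} after undoing the rescaling.

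\emph{Large-source case.} If \(\eps=Lr>\eps_0\), the conclusion is trivial. Indeed \(0\le q_1/q_0\le A\) gives oscillation at most \(A\le (A/\eps_0)Lr\), so the inequality holds after enlarging \(C\).

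\emph{Main obstacle.} The delicate step is the balance bookkeeping, in two respects. First, we must verify that the Perron replacements vanish continuously on the graph portion; this uses regularity of the graph points. Second, the balance estimates are made at scale \(1/2\) while \(G\) extends to \(\rho\) near \(2\). We must check that the reference points \(A^\pm_{1/2,\Lambda}\) lie in \(D_1\), where the envelopes are nonnegative, and that Theorem~\ref{thm:BHI-input} at scale \(1/2\) needs \(v\) only on \(D_1=Q_{M,2\cdot 1/2}\). This is where the reduction of \(r_*\) in the statement is used.
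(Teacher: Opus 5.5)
Your proposal is correct and follows essentially the paper's proof: rescaling so the sources are \(O(Lr)\), the trivial large-source case via \(0\le q_1/q_0\le A\), \(\K\)-harmonic replacement through Lemma~\ref{lem:harmonic-replacement}, an adjusted envelope obtained by adding \(\delta\omega h_0\) with balance constant \(3H(1+\delta)/\delta\), the homogeneous contraction, and an \(O(\varepsilon)\) transfer back. The differences are cosmetic, with one caveat; on the cosmetic side, the paper takes \(G=D_1\) and uses the extrema of \(q_1/q_0\) itself, so order preservation gives \(h_1/h_0\in[\underline\lambda,\overline\lambda]\) and only one transfer is needed, whereas your route transfers twice. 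The caveat is that for general \(H\) your bound \(3H(1+\delta)/\delta\le H_*\) need not hold. It is precisely the \(H\)-dependent reduction of \(r_*\) in the statement that secures the defect threshold \(\eta_0(m,M,3H(1+\delta)/\delta)\), rather than the reference-point bookkeeping you attribute it to.
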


\begin{proof}
The reference points \(A_{r/2,\Lambda}^{\pm}(P)\) lie in the interior of
\(D_{2r}(P)\).  Since \(q_0>0\), the quotient in
\eqref{eq:inhomogeneous-reference-balance} is well defined.  No positivity
assumption on \(q_1\) at the reference points is needed.

Translate \(P\) to the origin, apply the dilation \(\delta_{1/r}\), and
normalize \(q_i\) according to the homogeneity of a first-order diffusion
derivative,
\begin{equation}
\label{eq:normalized-derivatives}
 \widetilde q_i(Z)
 =
 {r}^{-1}q_i(P\circ\delta_rZ),
 \qquad i=0,1.
\end{equation}
Then
\[
 \K\widetilde q_i(Z)
 =
 r f_i(P\circ\delta_rZ).
\]
To simplify notation, we write \(q_i\) for \(\widetilde q_i\) and \(D_s\)
for the corresponding rescaled positivity domain.  Set
\(\varepsilon=2Lr\).  The rescaled functions satisfy
\begin{equation}
\label{eq:normalized-system}
 \|\K q_i\|_{\mathrm L^\infty(D_2)}
 \leq\frac{\varepsilon}{2},
 \qquad
 u_{P,r}\leq C_0q_0,
 \qquad
 0\leq q_1\leq Aq_0
 \qquad\text{in }D_2.
\end{equation}
Moreover, \(q_0\) is \(H\)-balanced with respect to the origin at scale
\(1/2\).

Let \(\varepsilon_*>0\) be the smallness threshold in
Lemma~\ref{lem:harmonic-replacement}.  If
\(\varepsilon>\varepsilon_*\), then
\[
 0\leq\frac{q_1}{q_0}\leq A,
\]
and hence
\[
 \operatorname{osc}_{D_\theta}\frac{q_1}{q_0}
 \leq A
 \leq
 \frac{A}{\varepsilon_*}\varepsilon.
\]
Thus \eqref{eq:inhomogeneous-contraction} follows in this case after
increasing \(C\).  We may therefore assume that
\(0\leq\varepsilon\leq\varepsilon_*\).

Set \(G=D_1\), and let \(h_i\) be the \(\K\)-harmonic replacement of
\(q_i\) in \(G\),
\begin{equation}
\label{eq:hi-replacement-problem}
 \begin{cases}
  \K h_i=0 & \text{in }G,\\
  h_i=q_i  & \text{on }\partial_{\K}G,
 \end{cases}
 \qquad i=0,1.
\end{equation}
Here \(h_i=H_G(q_i|_{\partial G})\). In particular, the boundary
conditions are attained continuously on the graph portion.
Lemma~\ref{lem:harmonic-replacement}, applied with \(\rho=1\), gives
\begin{equation}
\label{eq:hi-qi-error}
 |h_i-q_i|
 \leq
 C\varepsilon q_0,
 \qquad i=0,1,
\end{equation}
and, after decreasing \(\varepsilon_*\) if necessary,
\begin{equation}
\label{eq:h0-q0-comparability}
 \frac12q_0
 \leq
 h_0
 \leq
 \frac32q_0
 \qquad\text{in }G.
\end{equation}
In particular, \(h_0>0\) in \(G\).  It also follows from
\eqref{eq:inhomogeneous-reference-balance} and
\eqref{eq:h0-q0-comparability} that \(h_0\) is \(3H\)-balanced with
respect to the origin at scale \(1/2\).

Set
\begin{equation}
\label{eq:ratio-extrema}
 \underline\lambda
 =
 \inf_G\frac{q_1}{q_0},
 \qquad
 \overline\lambda
 =
 \sup_G\frac{q_1}{q_0},
 \qquad
 \omega=\overline\lambda-\underline\lambda.
\end{equation}
By \eqref{eq:normalized-system},
\begin{equation}
\label{eq:ratio-extrema-bounds}
 0\leq\underline\lambda\leq\overline\lambda\leq A.
\end{equation}
If \(\omega+\varepsilon=0\), then \(q_1/q_0\) is constant in \(G\), and
the conclusion is immediate.  We therefore assume that
\(\omega+\varepsilon>0\).

By linearity of the Dirichlet problem,
\(h_1-\underline\lambda h_0\) is the \(\K\)-harmonic replacement of
\(q_1-\underline\lambda q_0\).  Since
\(q_1-\underline\lambda q_0\geq0\) in \(G\), its continuous boundary values are
nonnegative on \(\partial G\).  Order preservation gives
\[
 h_1-\underline\lambda h_0\geq0
 \qquad\text{in }G.
\]
Similarly,
\[
 \overline\lambda h_0-h_1\geq0
 \qquad\text{in }G.
\]
Consequently,
\begin{equation}
\label{eq:harmonic-ratio-extrema}
 \underline\lambda
 \leq
 \frac{h_1}{h_0}
 \leq
 \overline\lambda
 \qquad\text{in }G.
\end{equation}

Using the fixed parameter \(\delta\in(0,1/8)\), define the adjusted
\(\K\)-harmonic envelope
\begin{equation}
\label{eq:adjusted-envelope}
 \mathcal E
 =
 h_1-\underline\lambda h_0+(\delta\omega+\varepsilon)h_0.
\end{equation}
The function \(\mathcal E\) is strictly positive and \(\K\)-harmonic in
\(G\), and it vanishes continuously on the graph portion.  From
\eqref{eq:harmonic-ratio-extrema},
\begin{equation}
\label{eq:adjusted-envelope-bounds}
 \delta\omega+\varepsilon
 \leq
 \frac{\mathcal E}{h_0}
 \leq
 (1+\delta)\omega+\varepsilon
 \qquad\text{in }G.
\end{equation}
Since \(h_0\) is \(3H\)-balanced at scale \(1/2\), these bounds imply that
\(\mathcal E\) is \(\widehat H\)-balanced there, where
\begin{equation}
\label{eq:adjusted-balance-constant}
 \widehat H
 =
 3H{(1+\delta)}/{\delta}.
\end{equation}
Indeed,
\begin{equation}
\label{eq:adjusted-envelope-balance}
 \widehat H^{-1}
 \leq
 \frac{\mathcal E(A_{1/2,\Lambda}^{+})}
      {\mathcal E(A_{1/2,\Lambda}^{-})}
 \leq
 \widehat H.
\end{equation}
The constant \(\widehat H\) is independent of
\(\omega\), \(\varepsilon\), \(P\), and \(r\).

We now apply Theorem~\ref{thm:BHI-input} at scale \(1/2\) to
\(\mathcal E\) and \(h_0\), using the balance constant \(\widehat H\).
The radius \(r_*\) is chosen sufficiently small that
\eqref{eq:BHI-defect-hypothesis} holds with the threshold
\(\eta_0(m,M,\widehat H)\).  Let
\(\kappa=\kappa(m,M)\) and
\(\sigma=\sigma(m,M,\widehat H)\) be the constants supplied by that
theorem.  Since
\begin{equation}
\label{eq:adjusted-envelope-quotient}
 \frac{\mathcal E}{h_0}
 =
 \frac{h_1}{h_0}-\underline\lambda+\delta\omega+\varepsilon,
\end{equation}
the quotients \(\mathcal E/h_0\) and \(h_1/h_0\) have the same
oscillation.

For \(0<\theta<\kappa/2\), the quotient estimate
\eqref{eq:homogeneous-holder} and
\eqref{eq:adjusted-envelope-bounds} give
\begin{equation}
\label{eq:h-ratio-precontraction}
 \operatorname{osc}_{D_\theta}
 \frac{h_1}{h_0}
 \leq
 C\theta^\sigma(\omega+\varepsilon).
\end{equation}
Choose \(0<\theta<{\kappa(m,M)}/{2}\) sufficiently small so that
\(C(m,M,\widehat H)
 \theta^{\sigma(m,M,\widehat H)}
 \leq 1/2\).  Then
\begin{equation}
\label{eq:h-ratio-contraction}
 \operatorname{osc}_{D_\theta}
 \frac{h_1}{h_0}
 \leq
 \frac12\omega+C\varepsilon.
\end{equation}

Finally, \eqref{eq:hi-qi-error},
\eqref{eq:h0-q0-comparability}, and
\eqref{eq:harmonic-ratio-extrema} imply
\begin{equation}
\label{eq:quotient-replacement-error}
 \left|
 \frac{q_1}{q_0}
 -
 \frac{h_1}{h_0}
 \right|
 \leq
 C\varepsilon
 \qquad\text{in }G.
\end{equation}
Combining \eqref{eq:h-ratio-contraction} and
\eqref{eq:quotient-replacement-error} yields
\begin{equation}
\label{eq:normalized-inhomogeneous-contraction}
 \operatorname{osc}_{D_\theta}
 \frac{q_1}{q_0}
 \leq
 \frac12
 \operatorname{osc}_{D_1}
 \frac{q_1}{q_0}
 +
 C\varepsilon.
\end{equation}
Scaling back and recalling that \(\varepsilon=2Lr\) proves
\eqref{eq:inhomogeneous-contraction}, with
\(\vartheta=1/2\).
\end{proof}

The following elementary iteration lemma is the discrete Campanato estimate
underlying the argument, compare
\cite{AthanasopoulosCaffarelli1985}.

\begin{lemma}
\label{lem:iteration}
Let \(\omega:(0,r_*]\longrightarrow[0,\infty)\) be bounded and
non-decreasing.  Suppose that
\[
 \omega(\theta r)
 \leq
 \vartheta\omega(r)+Ar^\mu
\]
for every \(0<r\leq r_*\), where
\(\theta,\vartheta\in(0,1)\), \(A\geq0\), and \(\mu>0\).  Then, for every
\[
 0<\beta
 <
 \min\bigl\{
 \mu,
 {(\log\vartheta)}/{\log\theta}
 \bigr\},
\]
there is a constant
\(C=C(\theta,\vartheta,\mu,\beta)\) such that
\begin{equation}
\label{eq:iteration-conclusion}
 \omega(\rho)
 \leq
 C
 \left(\frac{\rho}{r}\right)^\beta
 \bigl(\omega(r)+Ar^\mu\bigr),
 \qquad
 0<\rho\leq r\leq r_*.
\end{equation}
\end{lemma}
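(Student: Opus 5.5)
The plan is the standard discrete iteration on dyadic-type scales \(r_k=\theta^k r\). Fix \(0<\rho\leq r\leq r_*\) and set \(\omega_k=\omega(\theta^k r)\). Applying the hypothesis at radius \(\theta^{k}r\leq r_*\) gives
\[
 \omega_{k+1}\leq\vartheta\,\omega_k+A\theta^{k\mu}r^\mu .
\]
Unrolling this recursion by induction yields
\[
 \omega_k\leq\vartheta^k\omega_0+Ar^\mu\sum_{j=0}^{k-1}\vartheta^{k-1-j}\theta^{j\mu}.
\]

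The next step is to bound the sum by a single geometric rate \(\lambda^k\). Choose \(\lambda=\theta^\beta\). Since \(\beta<(\log\vartheta)/\log\theta\) and \(\log\theta<0\), we have \(\vartheta<\theta^\beta=\lambda\). Since \(\beta<\mu\), we also have \(\theta^\mu<\lambda\). Factoring out \(\lambda^{k-1}\) gives
\[
 \sum_{j=0}^{k-1}\vartheta^{k-1-j}\theta^{j\mu}
 \leq\lambda^{k-1}\sum_{j}\Bigl(\frac{\vartheta}{\lambda}\Bigr)^{k-1-j}\Bigl(\frac{\theta^\mu}{\lambda}\Bigr)^{j}.
\]
Each factor inside the sum is at most the larger of two ratios, each strictly less than one. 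A crude bound is the product of the two geometric series, \(\sum_i(\vartheta/\lambda)^i\sum_j(\theta^\mu/\lambda)^j\). This gives
\[
 \omega_k\leq C\lambda^k\bigl(\omega(r)+Ar^\mu\bigr)=C\theta^{k\beta}\bigl(\omega(r)+Ar^\mu\bigr),
\]
with \(C\) depending only on \(\theta,\vartheta,\mu,\beta\).

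Finally, pass to arbitrary \(\rho\). Choose \(k\geq0\) with \(\theta^{k+1}r<\rho\leq\theta^k r\). Monotonicity of \(\omega\) gives \(\omega(\rho)\leq\omega_k\). Moreover \(\theta^{k\beta}\leq\theta^{-\beta}(\rho/r)^\beta\), so the factor \(\theta^{-\beta}\) is absorbed into \(C\), which yields \eqref{eq:iteration-conclusion}. Boundedness of \(\omega\) only ensures all quantities are finite.

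There is no real obstacle. The only point needing care is choosing \(\lambda\) strictly between \(\max\{\vartheta,\theta^\mu\}\) and \(1\), so that the mixed geometric sum is controlled uniformly in \(k\). This is exactly where the strict inequality on \(\beta\) is used.
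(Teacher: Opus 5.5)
Your proof is correct and follows the paper's argument essentially verbatim: you iterate at \(r_k=\theta^k r\), unroll the recursion, use \(\vartheta<\theta^\beta\) and \(\theta^\mu<\theta^\beta\) to bound the mixed sum by \(C\theta^{k\beta}\), and pass to general \(\rho\) via monotonicity together with \(\theta^k\le\theta^{-1}\rho/r\). Your product-of-geometric-series bound simply makes explicit the step the paper states as ``it follows.''
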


\begin{proof}
Iterating the hypothesis at the radii \(r_k=\theta^kr\) gives
\[
 \omega(r_k)
 \leq
 \vartheta^k\omega(r)
 +
 Ar^\mu
 \sum_{j=0}^{k-1}
 \vartheta^{k-1-j}\theta^{j\mu}.
\]
The restriction on \(\beta\) implies
\[
 \vartheta\theta^{-\beta}<1,
 \qquad
 \theta^{\mu-\beta}<1.
\]
It follows that
\[
 \omega(r_k)
 \leq
 C\theta^{k\beta}
 \bigl(\omega(r)+Ar^\mu\bigr).
\]
For \(r_{k+1}<\rho\leq r_k\), monotonicity gives
\(\omega(\rho)\leq\omega(r_k)\), and
\(\theta^k\leq\theta^{-1}\rho/r\).  This proves
\eqref{eq:iteration-conclusion}.
\end{proof}

\section{H\"older continuity of the non-degenerate normal}
\label{sec:normal}

We next apply Proposition~\ref{prop:inhomogeneous-contraction} to the
positive directional derivatives introduced in
\eqref{eq:positive-combinations}.  We retain the coordinates and the
common monotonicity cone fixed in
Theorem~\ref{thm:bowman-input}.  In the positivity set, define
\begin{equation}
\label{eq:R-definition}
 R_j
 =
 \frac{u_{x_j}}{u_{x_m}},
 \qquad j=1,\ldots,m-1.
\end{equation}
Since \(u_{x_m}>0\), we have the exact identity
\begin{equation}
\label{eq:normalized-gradient-from-R}
 \frac{\nabla_xu}{|\nabla_xu|}
 =
 \frac{(R_1,\ldots,R_{m-1},1)}
 {\sqrt{1+|R|^2}}
 \qquad\text{in }\Omega_u.
\end{equation}
The corresponding boundary relation is suggested by the half-space
blow-up.  Indeed, if
\[
 U_P(x)
 =
 \frac12\bigl(x\cdot\nu(P)\bigr)_+^2,
\]
then, throughout its positivity half-space,
\[
 \frac{\partial_{x_j}U_P}{\partial_{x_m}U_P}
 =
 \frac{\nu_j(P)}{\nu_m(P)}.
\]
The common monotonicity cone gives
\(\nu_m(P)\geq c_0>0\).  Thus, once the ratios \(R_j\) are shown to have
boundary traces, those traces determine the inward unit normal through
\begin{equation}
\label{eq:normal-from-ratios}
 \nu(P)
 =
 \frac{(R_1(P),\ldots,R_{m-1}(P),1)}
 {\sqrt{1+|R(P)|^2}}.
\end{equation}

Now let \(q_0=u_{x_m}\).  Since
\begin{equation}
\label{eq:R-positive-quotients}
 {q_j^\pm}/{q_0}
 =
 1\pm a_0R_j,
\end{equation}
it is enough to establish boundary H\"older estimates for the positive
quotients \(q_j^\pm/q_0\).

\begin{proposition}
\label{prop:normal-holder}
There exist \(\beta_0\in(0,1)\), \(C<\infty\), and a fixed smaller
regular coordinate neighborhood \(\mathcal U'\) such that each \(R_j\)
extends continuously from \(\Omega_u\cap\mathcal U'\) to
\(\Gamma_u\cap\mathcal U'\).  Denoting the boundary trace by the same
symbol, we have
\begin{equation}
\label{eq:R-holder}
 |R_j(P)-R_j(\widetilde P)|
 \leq
 C(\dK(P,\widetilde P))^{\beta_0}
\end{equation}
for every \(P,\widetilde P\in\Gamma_u\cap\mathcal U'\) and
\(j=1,\ldots,m-1\).  Moreover,
\[
 R_j(P)
 =
 \frac{\nu_j(P)}{\nu_m(P)},
 \qquad
 P\in\Gamma_u\cap\mathcal U',
 \quad j=1,\ldots,m-1.
\]
Consequently, the inward unit normal is given by
\eqref{eq:normal-from-ratios} and is H\"older continuous with exponent
\(\beta_0\) on \(\Gamma_u\cap\mathcal U'\).
\end{proposition}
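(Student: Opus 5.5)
The plan is to apply Proposition~\ref{prop:inhomogeneous-contraction} with \(q_0=u_{x_m}\) and \(q_1=q_j^\pm\), iterate with Lemma~\ref{lem:iteration}, and identify the trace through the blow-up \eqref{eq:bowman-unique-blowup}.

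\emph{Checking the hypotheses.} For \(P\in\Gamma_u\) in a smaller patch \(\mathcal U'\) and \(0<r\leq r_*\), both \(q_0\) and \(q_1\) are continuous up to the graph and vanish there (Lemma~\ref{lem:gradient-vanishing}). Cone monotonicity gives \(q_0>0\) and \(q_1\ge0\), and \(q_1\le 2q_0\) because \(|a_0R_j|\le1\); hence \(A=2\). By \eqref{eq:derivative-equations} and \eqref{eq:bowman-solution-bounds}, \(\K q_i\) is a bounded linear combination of the \(u_{y_i}\), so \(L=C\). The domination \(u\le C_0rq_0\) on \(D_{2r}(P)\) is the unscaled form of \eqref{eq:u-controlled-by-q}. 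The balance of \(q_0\) at scale \(r/2\) follows from Lemma~\ref{lem:uniform-reference-values} applied at scale \(r/2\) (the same lemma, rescaled). The geometric smallness \eqref{eq:defect-small} is already arranged.

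\emph{Oscillation decay.} The proposition gives
\[
 \omega(\theta r)\le\tfrac12\,\omega(r)+Cr,
 \qquad
 \omega(r)=\osc_{D_r(P)}\frac{q_1}{q_0}.
\]
Here \(\omega\) is nondecreasing in \(r\) and bounded by \(2\). Lemma~\ref{lem:iteration} with \(\mu=1\) then yields \(\omega(\rho)\le C\rho^{\beta_0}\), uniformly in \(P\). This is a Cauchy criterion, so \(q_1/q_0\), and hence \(R_j=\pm(q_1/q_0-1)/a_0\), has a limit \(R_j(P)\) as \(Z\to P\) within \(\Omega_u\). The same bound gives
\[
 |R_j(Z)-R_j(P)|\le C\,\dK(Z,P)^{\beta_0}
\]
for \(Z\in D_\rho(P)\), with \(\rho\approx\dK(Z,P)\) up to box constants.

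\emph{Hölder estimate between boundary points.} For \(P,\widetilde P\) at distance \(d\), take a point \(Z\in\Omega_u\) lying in both \(D_{Cd}(P)\) and \(D_{Cd}(\widetilde P)\); a corkscrew point from \eqref{eq:two-sided-corkscrews} at scale \(Cd\) works. The triangle inequality then gives \eqref{eq:R-holder}. Distances larger than \(r_*\) are handled trivially by boundedness of \(R_j\).

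\emph{Identifying the trace.} Consider \(u_{P,r}\to\frac12(x\cdot\nu(P))_+^2\) in \(C^1_x\). At the fixed interior point \(A_{1,\Lambda}\), the rescaled derivatives converge to \(\nu_j(\Lambda\nu_m)\) and \(\nu_m(\Lambda\nu_m)\), and the latter is bounded below. By scale invariance the ratio at \(A_{1,\Lambda}\) equals \(R_j(P\circ\delta_rA_{1,\Lambda})\), which tends to \(R_j(P)\). Therefore \(R_j(P)=\nu_j/\nu_m\), and \eqref{eq:normal-from-ratios} follows. Since the map \(R\mapsto(R,1)/\sqrt{1+|R|^2}\) is Lipschitz, \(\nu\) inherits the exponent \(\beta_0\).

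\emph{Main obstacle.} I expect the delicate step to be verifying the hypotheses of Proposition~\ref{prop:inhomogeneous-contraction} uniformly at every recentered boundary point and every scale. This means checking the domain inclusions \(D_{2r}(P)\subset\) working box, and the balance at scale \(r/2\) rather than at scale \(r\). I would handle both by invoking Lemma~\ref{lem:uniform-reference-values} with \(r\) replaced by \(r/2\), after reducing \(r_*\) and \(\mathcal U'\) once.
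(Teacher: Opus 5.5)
Your proposal is correct and follows the paper's proof essentially step by step: the contraction from Proposition~\ref{prop:inhomogeneous-contraction} with $A=2$ and $L=C$, the iteration via Lemma~\ref{lem:iteration} with $\mu=1$, a common interior point (your corkscrew point in place of the paper's point at height $\lambda d$ in the $e_m$-direction) for the two-point estimate, and identification of the trace at $A_{1,\Lambda}$ through the $C^1_x$ blow-up convergence. The one step the paper adds, and which your proposal omits, is checking that the blow-up normal coincides with the intrinsic differential of the graph, $a_P=-R(P)$, by comparing the limits of the rescaled free-boundary graphs; the statement as written does not strictly require this, but Lemma~\ref{lem:geometric-upgrade} relies on it later.
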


\begin{proof}
Fix \(j\in\{1,\ldots,m-1\}\) and one of the two signs.  We apply
Proposition~\ref{prop:inhomogeneous-contraction} with
\[
 q_1=q_j^\pm,
 \qquad
 q_0=u_{x_m}.
\]
The positivity properties established in Section~\ref{sec:derivatives}
give
\[
 q_0>0,
 \qquad
 q_j^\pm\geq0
\]
in the local positivity set.  Since both \(q_j^+\) and \(q_j^-\) are
nonnegative, \(|a_0u_{x_j}|\leq u_{x_m}\), and hence
\begin{equation}
\label{eq:qj-q0-comparison}
 0\leq q_j^\pm\leq2q_0.
\end{equation}
Lemma~\ref{lem:gradient-vanishing} shows that \(q_0\) and \(q_j^\pm\)
extend continuously by zero to the regular free-boundary graph.

Scaling back \eqref{eq:u-controlled-by-q}, with \(q=q_{0,P,r}\), gives
\begin{equation}
\label{eq:u-q0-scale-comparison}
 0\leq u\leq Crq_0
 \qquad\text{in }D_{2r}(P),
\end{equation}
after reducing the regular neighborhood and \(r_*\), if necessary.  By
\eqref{eq:derivative-equations},
\[
 \K q_0=-u_{y_m},
 \qquad
 \K q_j^\pm=-u_{y_m}\mp a_0u_{y_j}.
\]
Therefore, \eqref{eq:bowman-solution-bounds} gives
\begin{equation}
\label{eq:q-source-uniform-bound}
 \|\K q_0\|_{\mathrm L^\infty(D_{2r}(P))}
 +
 \|\K q_j^\pm\|_{\mathrm L^\infty(D_{2r}(P))}
 \leq C.
\end{equation}

Finally, apply \eqref{eq:scaled-reference-value-bounds} with center \(P\)
and scale \(r/2\).  Since
\[
 q_{0,P,r/2}(A_{1,\Lambda}^{\pm})
 =
 \frac{2}{r}\,
 q_0(A_{r/2,\Lambda}^{\pm}(P)),
\]
the common factor \(2/r\) cancels in the quotient of the two reference
values.  Hence \(q_0\) is \(H\)-balanced with respect to \(P\) at scale
\(r/2\), uniformly in \(P\) and \(r\).  Thus all the hypotheses of
Proposition~\ref{prop:inhomogeneous-contraction} are satisfied with
\[
 A=2,
 \qquad
 C_0=C,
 \qquad
 L=C.
\]

Set $F_j^\pm:={q_j^\pm}/{q_0}$. Proposition~\ref{prop:inhomogeneous-contraction} yields
\begin{equation}
\label{eq:ratio-oscillation-recursion}
 \operatorname{osc}_{D_{\theta r}(P)}F_j^\pm
 \leq
 \vartheta
 \operatorname{osc}_{D_r(P)}F_j^\pm
 +
 Cr
\end{equation}
at every graph point \(P\) and every sufficiently small \(r\).  Moreover,
\eqref{eq:qj-q0-comparison} gives
\begin{equation}
\label{eq:F-uniform-bound}
 0\leq F_j^\pm\leq2.
\end{equation}

For fixed \(P\), define
\[
 \omega_P(r)
 =
 \operatorname{osc}_{D_r(P)}F_j^\pm.
\]
This function is non-decreasing in \(r\), and
\eqref{eq:F-uniform-bound} gives the uniform initial bound
\(\omega_P(r_0)\leq2\) for a fixed sufficiently small
\(r_0\in(0,r_1]\).  Applying
Lemma~\ref{lem:iteration} to
\eqref{eq:ratio-oscillation-recursion}, with \(\mu=1\), gives an exponent
\begin{equation}
\label{eq:beta0-choice}
 0<\beta_0
 <
 \min\bigl\{
 1,{(\log\vartheta)}/{\log\theta}
 \bigr\}
\end{equation}
such that
\begin{equation}
\label{eq:ratio-boundary-decay}
 \operatorname{osc}_{D_\rho(P)}F_j^\pm
 \leq
 C\rho^{\beta_0},
 \qquad
 0<\rho\leq r_0.
\end{equation}
The constant in \eqref{eq:ratio-boundary-decay} includes the factor
\(r_0^{-\beta_0}\) from \eqref{eq:iteration-conclusion}.  Thus it retains
the dependence on the initial radius, and hence on the boundary-vanishing
modulus used to choose \(r_1\).  This is part of the fixed localization data.
The constants in the contraction estimate are independent of \(j\) and
the choice of sign.  We may therefore choose the same \(\beta_0\) for all
the functions \(F_j^\pm\).  Estimate
\eqref{eq:ratio-boundary-decay} is also uniform for \(P\) in a fixed
smaller regular neighborhood.

Because the sets \(D_\rho(P)\) form a neighborhood basis for \(P\) from
within \(\Omega_u\), and their \(F_j^\pm\)-oscillations tend to zero as
\(\rho\downarrow0\), the function \(F_j^\pm\) has a unique limit as a
point of \(\Omega_u\) approaches \(P\).  Denote this boundary value by
\(F_j^\pm(P)\).  Passing to the boundary in
\eqref{eq:R-positive-quotients} gives
\begin{equation}
\label{eq:R-boundary-value}
 R_j(P)
 =
 \frac{F_j^+(P)-1}{a_0}
 =
 \frac{1-F_j^-(P)}{a_0}.
\end{equation}
Thus \(R_j\) has a well-defined boundary trace on the regular graph.

We next compare the boundary values at two points \(P\) and
\(\widetilde P\).  Set $d=\dK(P,\widetilde P)$. Choose \(d_0>0\) sufficiently small that all boxes used below remain in
the fixed coordinate patch whenever \(d\leq d_0\).
Suppose first that \(d\leq d_0\).  The intrinsic Lipschitz geometry and
the common non-characteristic \(e_m\)-direction provide a point
\(Z\in\Omega_u\) such that
\begin{equation}
\label{eq:common-corkscrew}
 Z\in D_{C_2d}(P)\cap D_{C_2d}(\widetilde P),
 \qquad
 \dK(Z,\Gamma_u)\geq c_2d.
\end{equation}
Indeed, after translating \(P\) to the origin, one may choose a point at
height \(\lambda d\) in the \(e_m\)-direction, where \(\lambda\) depends
only on the intrinsic Lipschitz constant.  Taking \(\lambda\) sufficiently
large places the point a distance comparable to \(d\) above the graph.
Since \(\dK(P,\widetilde P)=d\), the quasi-triangle inequality then places the same
point in intrinsic boxes of radius \(C_2d\) centered at both \(P\) and
\(\widetilde P\).

Choose \(C_3\geq C_2\), and decrease \(d_0\), if necessary, so that
\(C_3d\leq r_0\) whenever \(d\leq d_0\).  Applying
\eqref{eq:ratio-boundary-decay} at \(P\) and \(\widetilde P\), at scale \(C_3d\),
and then passing to the boundary in the corresponding oscillation
estimates, we obtain
\begin{align}
 |F_j^\pm(P)-F_j^\pm(\widetilde P)|
 &\leq
 |F_j^\pm(P)-F_j^\pm(Z)|
 +
 |F_j^\pm(Z)-F_j^\pm(\widetilde P)|\leq
 Cd^{\beta_0}.
\label{eq:F-boundary-holder}
\end{align}
Equation \eqref{eq:R-boundary-value} now gives
\eqref{eq:R-holder}.  In particular, the boundary trace of \(R_j\) is
H\"older continuous.  If \(d>d_0\), the same estimate follows, after
increasing \(C\), from the uniform bound \(|R_j|\leq a_0^{-1}\), which is
a consequence of \(|a_0u_{x_j}|\leq u_{x_m}\).

It remains to identify this trace with the inward normal.  Let
\(\nu_{\mathrm{bu}}(P)\) denote the inward normal of the unique half-space
blow-up at \(P\).  Its \(m\)-th component is uniformly bounded away from
zero, so the central reference point
\[
 Z_*=A_{1,\Lambda}
\]
lies strictly inside the limiting positivity half-space.  The convergence
\begin{equation}
\label{eq:blowup-C1-convergence}
 u_{P,r}
 \longrightarrow
 \frac12\bigl(x\cdot\nu_{\mathrm{bu}}(P)\bigr)_+^2
\end{equation}
in \(C_x^1\) on compact subsets of that half-space gives
\begin{equation}
\label{eq:R-normal-identification}
 \lim_{r\downarrow0}
 \frac{u_{x_j}(P\circ\delta_rZ_*)}
      {u_{x_m}(P\circ\delta_rZ_*)}
 =
 \frac{\nu_{\mathrm{bu},j}(P)}
      {\nu_{\mathrm{bu},m}(P)}.
\end{equation}
Here the common scaling factor \(r^{-1}\) in the diffusion derivatives
cancels in the quotient, and the limiting denominator is strictly
positive.  The points \(P\circ\delta_rZ_*\) belong to \(\Omega_u\) for all
sufficiently small \(r\) and converge intrinsically to \(P\).  Hence the
left-hand side of \eqref{eq:R-normal-identification} also converges to the
boundary trace \(R_j(P)\).  Consequently,
\begin{equation}
\label{eq:R-blowup-normal}
 R_j(P)
 =
 \frac{\nu_{\mathrm{bu},j}(P)}
      {\nu_{\mathrm{bu},m}(P)},
 \qquad
 j=1,\ldots,m-1.
\end{equation}
Since \(\nu_{\mathrm{bu},m}(P)>0\), this gives
\[
 \nu_{\mathrm{bu}}(P)
 =
 \frac{(R(P),1)}{\sqrt{1+|R(P)|^2}}.
\]

Finally, let \(a_P\) denote the intrinsic first-order differential of the
graph at \(P\).  In intrinsic coordinates centered at \(P\), the
qualitative Kolmogorov differentiability of the graph gives convergence
of its rescalings to the tangent hyperplane $x_m=a_P\cdot x'$. On the other hand, the rescaled positivity sets converge to the positivity
half-space of the unique blow-up,
\[
 \{x\cdot\nu_{\mathrm{bu}}(P)>0\},
\]
whose boundary is
\[
 x_m
 =
 -\frac{\nu_{\mathrm{bu}}'(P)}
        {\nu_{\mathrm{bu},m}(P)}
 \cdot x'
 =
 -R(P)\cdot x'.
\]
Because these are the limits of the same rescaled free-boundary graphs,
the two limiting hyperplanes coincide.  Therefore,
\begin{equation}
\label{eq:graph-gradient-ratio}
 \nabla_{x'}\psi(P)
 =
 a_P
 =
 -R(P).
\end{equation}
Thus the inward blow-up normal agrees with the intrinsic graph normal.
In view of \eqref{eq:R-blowup-normal}, this common normal is precisely the
vector in \eqref{eq:normal-from-ratios}.  Since \(R\) is H\"older
continuous and the map
\[
 R\longmapsto\frac{(R,1)}{\sqrt{1+|R|^2}}
\]
is smooth on the bounded range of \(R\), the inward unit normal is
H\"older continuous with exponent \(\beta_0\).  This completes the proof.
\end{proof}

\section{Power decay of the transport derivative}
\label{sec:transport}

We continue to use the commutator convention
\([A,B]=AB-BA\).  Since $\K=\Delta_x+Y$, $Y=x\cdot\nabla_y-\partial_t$, a direct computation gives
\begin{equation}
\label{eq:K-Y-commutator}
 [\K,Y]
 =
 [\Delta_x,Y]
 =
 2\sum_{i=1}^m\partial_{x_i}\partial_{y_i}.
\end{equation}
Because \(\K u=1\) in \(\Omega_u\) and \(Y1=0\), it follows that
\begin{equation}
\label{eq:Y-equation}
 \K(Yu)
 =
 2\sum_{i=1}^m\partial_{x_i}\partial_{y_i}u
 =
 2\operatorname{div}_x(\nabla_yu)
 \qquad\text{in }\Omega_u.
\end{equation}
We use \eqref{eq:Y-equation} in the distributional sense.  In this form,
the right-hand side involves only the bounded vector field
\(\nabla_yu\) and no pointwise control of the mixed derivatives
\(\partial_{x_i}\partial_{y_i}u\) is required.

The transport derivative is invariant under the obstacle-problem
rescaling, whereas a \(y\)-derivative gains one power of the scale.
More precisely,
\begin{equation}
\label{eq:scaled-Y-identities}
 Yu_{P,r}(Z)
 =
 Yu(P\circ\delta_rZ),
 \qquad
 \nabla_yu_{P,r}(Z)
 =
 r\nabla_yu(P\circ\delta_rZ).
\end{equation}
Consequently, \eqref{eq:Y-equation} becomes
\begin{equation}
\label{eq:scaled-Y-equation}
 \K(Yu_{P,r})
 =
 \operatorname{div}_x\mathbf F_{P,r},
 \qquad
 \mathbf F_{P,r}
 =
 2\nabla_yu_{P,r},
\end{equation}
and \eqref{eq:bowman-solution-bounds} gives
\begin{equation}
\label{eq:scaled-Y-vector-bound}
 \|\mathbf F_{P,r}\|_{\mathrm L^\infty}
 \leq Cr
\end{equation}
on every fixed rescaled working box.  Moreover,
\cite[Proposition~6.7]{Bowman2025} shows that \(Yu\) extends continuously
to the regular free boundary with boundary value zero.  We next quantify
this vanishing.

\begin{lemma}
\label{lem:divergence-perturbation}
Let \(\Omega_\psi\) be one of the normalized graph domains arising from the
regular free-boundary neighborhood, with \(0\in\Delta_\psi\), and set
\(D_s=\Omega_\psi\cap Q_{M,s}(0)\).  Let \(g\) be a bounded
distributional solution of
\begin{equation}
\label{eq:divergence-data-equation}
 \K g=\operatorname{div}_xF
 \qquad\text{in }D_2,
\end{equation}
where \(F\in\mathrm L^\infty(D_2;\R^m)\).  Suppose that \(g\) extends
continuously to the graph portion of \(\partial D_2\) and vanishes there.
Then there exist
\[
 \theta_0\in(0,1/4),
 \qquad
 \alpha_0\in(0,1),
 \qquad
 C<\infty,
\]
such that
\begin{equation}
\label{eq:divergence-boundary-decay}
 \|g\|_{\mathrm L^\infty(D_{2\theta})}
 \leq
 C\theta^{\alpha_0}
 \|g\|_{\mathrm L^\infty(D_2)}
 +
 C\|F\|_{\mathrm L^\infty(D_2)}
\end{equation}
for every \(0<\theta\leq\theta_0\).  The constants depend only on the
quantitative geometric data and are uniform for all normalized graph
domains arising in the regular neighborhood.
\end{lemma}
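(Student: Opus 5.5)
Write \(g=g_0+g_1\) on \(G=D_1\) (or a slightly larger truncation \(D_\rho\), \(1\le\rho<2\)). Here \(g_1\) is a particular solution of \(\K g_1=\operatorname{div}_xF\) with zero boundary data, and \(g_0=g-g_1\) is \(\K\)-harmonic in \(G\) with the boundary values of \(g\). The decay of \(g_0\) will come from boundary Hölder continuity on the intrinsic Lipschitz graph. The size of \(g_1\) will be bounded by \(C\|F\|_\infty\) through a Green-function or fundamental-solution estimate. Adding the two bounds gives \eqref{eq:divergence-boundary-decay}.

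\emph{Step 1: the particular solution.} I would extend \(F\) by zero outside \(G\) and set \(g_1^*=-\int\nabla_\xi\Gamma_{\K}(Z,\xi)\cdot F(\xi)\,\d\xi\) over a fixed box containing \(D_2\), where \(\Gamma_{\K}\) is the fundamental solution. The Gaussian bound \(|\nabla_{\xi_x}\Gamma_{\K}(Z,\xi)|\lesssim \dK(Z,\xi)^{-(4m+2)+1}\) is integrable, since the homogeneous dimension is \(4m+2\). This gives \(\|g_1^*\|_{\mathrm L^\infty}\le C\|F\|_\infty\), and \(g_1^*\) is continuous by the standard estimates. Then \(g_1^*\) solves \(\K g_1^*=\operatorname{div}_xF\) distributionally. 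I would not insist that \(g_1\) vanish on the boundary. Instead I set \(g_0=g-g_1^*\), which is \(\K\)-harmonic in \(D_2\), bounded by \(\|g\|_\infty+C\|F\|_\infty\), and has boundary values on the graph portion bounded by \(C\|F\|_\infty\) in absolute value.

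\emph{Step 2: boundary decay for \(g_0\).} Let \(k=\sup_{\Delta_\psi\cap Q_{M,2}}|g_1^*|\le C\|F\|_\infty\). Then \((g_0-k)_+\) and \((-g_0-k)_+\) are nonnegative \(\K\)-subharmonic functions that vanish continuously on the graph portion. I would use the boundary Hölder (De Giorgi / oscillation) estimate for \(\K\)-harmonic functions vanishing on an intrinsic Lipschitz graph. This estimate follows from the exterior intrinsic-cone condition in \cite[Section~6]{LitsgardNystrom2022}, or from the corkscrews of Theorem~\ref{thm:bowman-input}(iii) and the Harnack chain underlying Theorem~\ref{thm:BHI-input}. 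Iterating the barrier gives a fixed fraction of decay of the supremum from \(D_{2}\) to \(D_{2\theta_1}\). After \(k\) iterations this yields \(\sup_{D_{2\theta}}(\pm g_0-k)_+\le C\theta^{\alpha_0}\sup_{D_2}|g_0|\). The constants depend only on \(M\), hence are uniform over all normalized domains in the regular neighborhood.

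\emph{Step 3: combine.} On \(D_{2\theta}\) we have \(|g|\le|g_0|+|g_1^*|\le C\theta^{\alpha_0}(\|g\|_\infty+C\|F\|_\infty)+2C\|F\|_\infty\), which is \eqref{eq:divergence-boundary-decay}.

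The main obstacle is Step 2. The decay-of-supremum must hold uniformly on these time-directed graph truncations, which requires a quantitative barrier at every graph point, and the artificial \(y\)-faces of the box must not interfere. I would try a scale-invariant Wiener-type estimate first. In it, the exterior cone (the contact set contains \(Q_{\kappa s}(Z^-_{P,s})\) at every scale) yields a positive lower bound on the \(\K\)-capacity of the complement, hence a fixed-factor reduction of \(\sup(\pm g_0-k)_+\) on each dyadic step.
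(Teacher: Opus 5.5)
Your proof is correct. Its first step is the same as the paper's: the particular solution is the fundamental-solution potential of $\operatorname{div}_x$ of the extended field, and it is bounded by $C\|F\|_{\mathrm L^\infty(D_2)}$ because the kernel has homogeneous degree $1-\mathfrak q$. One small fix: extend $F$ by zero outside $D_2$, not outside $G$, if you want $g_0$ to be $\K$-harmonic in all of $D_2$.

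The two arguments differ in how they dispose of the nonzero graph values $-g_1^*$ of the $\K$-harmonic remainder. The paper removes them linearly. It solves the auxiliary Perron problem $b=H_G((-\eta z)|_{\partial G})$ on $G=D_\rho$ and bounds $b$ by the maximum principle. It then applies the boundary H\"older estimate for bounded, sign-changing $\K$-harmonic functions \cite[Theorem~3.2]{LitsgardNystrom2022} to $g-z-b$, which vanishes exactly on the graph portion inside $Q_{M,1}(0)$.

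You instead truncate at the level $k=\sup_{\Delta_\psi\cap Q_{M,2}}|g_1^*|$. Then $(\pm g_0-k)_+$ vanish on the whole graph portion, and no cutoff or auxiliary Dirichlet problem is needed. The price is two standard facts that the linear decomposition avoids:
\begin{itemize}
\item[(a)] $(\pm g_0-k)_+$ are subsolutions. This follows, by approximation, from $\K\varphi(w)=\varphi''(w)|\nabla_xw|^2\geq0$ for smooth convex $\varphi$ and $\K w=0$.
\item[(b)] The boundary decay holds for nonnegative subsolutions. You get this from the solution case by Perron comparison on $D_\rho$ with $\rho<2$, where $g_0$ is continuous up to the artificial faces.
\end{itemize}
So both arguments rest on the same boundary H\"older input and the same regularity of graph points, and citing that estimate suffices.

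Your proposed Wiener/capacity route would need extra care. A barrier at a boundary point for the forward operator $\K$ sees only the complement in the past of that point. Theorem~\ref{thm:bowman-input}\textnormal{(iii)} does not locate $Z^-_{P,s}$ in time, so the exterior corkscrews alone do not give you that barrier.
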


\begin{proof}
Choose \(\chi\in C_0^\infty(Q_{M,2}(0))\) such that
\[
 0\leq\chi\leq1,
 \qquad
 \chi\equiv1
 \quad\text{on }Q_{M,3/2}(0).
\]
Define the compactly supported vector field
\(\widetilde F\) on the ambient Kolmogorov group
\((\R^{2m+1},\circ)\) by
\begin{equation}
\label{eq:F-extension}
 \widetilde F(Z)
 =
 \begin{cases}
  \chi(Z)F(Z), & Z\in D_2,\\
  0,           & Z\notin D_2.
 \end{cases}
\end{equation}
Let \(\Gamma_{\K}\) be the fundamental solution normalized by
\(\K\Gamma_{\K}=\delta_0\), and set
\(E(Z,W)=\Gamma_{\K}(W^{-1}\circ Z)\).  Left invariance gives
\(\K_ZE(Z,W)=\delta_W\).  Define
\begin{equation}
\label{eq:divergence-potential}
 z(Z)
 =
 -\sum_{i=1}^m
 \int_{\R^{2m+1}}
 \partial_{w_i}E(Z,W)\widetilde F_i(W)\,\d W.
\end{equation}
This is the potential of \(\operatorname{div}_x\widetilde F\), with the
order of the two kernel variables fixed explicitly.  The derivatives
\(\partial_{w_i}\) act on the diffusion coordinates of \(W\).
Writing \(K_i(Z,W)=-\partial_{w_i}E(Z,W)\), these kernels are
first-order right-invariant derivatives of \(\Gamma_{\K}\).
If \(\mathfrak q=4m+2\) denotes the homogeneous
dimension, then \(\Gamma_{\K}\) has degree
\(2-\mathfrak q\), while \(K_i\) has degree \(1-\mathfrak q\).  In
particular,
\[
 |K_i(Z,W)|
 \leq
 C\|W^{-1}\circ Z\|_{\K}^{1-\mathfrak q}.
\]
These kernels are locally integrable with respect to Haar measure, which in
the present coordinates is Lebesgue measure.  The standard fundamental
solution and kernel estimates may be found, for example, in
\cite{NystromPolidoro2016}.

Since \(\widetilde F\) is supported in a fixed box, the kernel estimate
gives
\begin{equation}
\label{eq:potential-Linfty-bound}
 \|z\|_{\mathrm L^\infty(Q_{M,3/2}(0))}
 \leq
 C\|F\|_{\mathrm L^\infty(D_2)}.
\end{equation}
Translation continuity of the kernels in
\(\mathrm L^1_{\mathrm{loc}}\) also shows that \(z\) has a continuous
representative.  Since \(\widetilde F=F\) in \(D_{3/2}\), we have
\begin{equation}
\label{eq:potential-equation}
 \K z
 =
 \operatorname{div}_xF
 \qquad\text{in }D_{3/2}
\end{equation}
in the sense of distributions.  The extension of \(\widetilde F\) across
the graph produces no additional term in
\eqref{eq:potential-equation}, since the identity is asserted only in the
open set \(D_{3/2}\).

Set $h=g-z$. Equations \eqref{eq:divergence-data-equation} and
\eqref{eq:potential-equation} imply that
\[
 \K h=0
 \qquad\text{in }D_{3/2}.
\]
On the graph portion, \(g=0\), and hence \(h=-z\).  We remove this
nonzero boundary value by solving an auxiliary homogeneous Dirichlet
problem.

Fix \(\rho\in(5/4,3/2)\), set \(G=D_\rho\), and choose
\(\eta\in C_0^\infty(Q_{M,\rho}(0))\) such that
\begin{equation}
\label{eq:boundary-cutoff}
 0\leq\eta\leq1,
 \qquad
 \eta\equiv1
 \quad\text{on }Q_{M,1}(0).
\end{equation}
Let \(b=H_G((-\eta z)|_{\partial G})\) be the Perron solution of
\begin{equation}
\label{eq:boundary-correction-problem}
 \begin{cases}
  \K b=0 & \text{in }G,\\
  b=-\eta z & \text{on }\partial_{\K}G.
 \end{cases}
\end{equation}
The boundary data are continuous and vanish near the artificial boundary.
Perron comparison and
\eqref{eq:potential-Linfty-bound} give
\begin{equation}
\label{eq:boundary-correction-bound}
 \|b\|_{\mathrm L^\infty(G)}
 \leq
 \|\eta z\|_{\mathrm L^\infty(\partial G)}
 \leq
 C\|F\|_{\mathrm L^\infty(D_2)}.
\end{equation}

Define
\begin{equation}
\label{eq:zero-boundary-harmonic-remainder}
 \widehat h
 =
 h-b
 =
 g-z-b.
\end{equation}
Then \(\widehat h\) is \(\K\)-harmonic in \(G\).  On the graph portion
contained in \(Q_{M,1}(0)\), we have \(g=0\), \(h=-z\), and \(b=-z\).
Therefore,
\begin{equation}
\label{eq:hhat-zero-boundary}
 \widehat h=0
 \qquad\text{on the graph portion in }Q_{M,1}(0).
\end{equation}
The local boundary H\"older estimate for bounded, possibly sign-changing
\(\K\)-harmonic functions vanishing on a non-characteristic graph portion
now gives
\begin{equation}
\label{eq:hhat-boundary-holder}
 \|\widehat h\|_{\mathrm L^\infty(D_{2\theta})}
 \leq
 C\theta^{\alpha_0}
 \|\widehat h\|_{\mathrm L^\infty(D_1)},
 \qquad
 0<\theta\leq\theta_0.
\end{equation}
This estimate requires only the intrinsic Lipschitz character of the graph,
not independence from \(y_m\), see
\cite[Theorem~3.2]{LitsgardNystrom2022}.

By \eqref{eq:potential-Linfty-bound} and
\eqref{eq:boundary-correction-bound},
\begin{equation}
\label{eq:hhat-global-bound}
 \|\widehat h\|_{\mathrm L^\infty(D_1)}
 \leq
 \|g\|_{\mathrm L^\infty(D_2)}
 +
 C\|F\|_{\mathrm L^\infty(D_2)}.
\end{equation}
Since \(g=\widehat h+z+b\), estimates
\eqref{eq:potential-Linfty-bound},
\eqref{eq:boundary-correction-bound},
\eqref{eq:hhat-boundary-holder}, and
\eqref{eq:hhat-global-bound} give
\begin{align}
 \|g\|_{\mathrm L^\infty(D_{2\theta})}
 &\leq
 \|\widehat h\|_{\mathrm L^\infty(D_{2\theta})}
 +
 \|z\|_{\mathrm L^\infty(D_{2\theta})}
 +
 \|b\|_{\mathrm L^\infty(D_{2\theta})}\leq
 C\theta^{\alpha_0}
 \|g\|_{\mathrm L^\infty(D_2)}
 +
 C\|F\|_{\mathrm L^\infty(D_2)}.
\label{eq:divergence-boundary-decay-proof}
\end{align}
This proves \eqref{eq:divergence-boundary-decay}.
\end{proof}

\begin{proposition}
\label{prop:Y-power-decay}
There exist \(\gamma\in(0,1)\) and \(C<\infty\) such that
\begin{equation}
\label{eq:Y-power-decay}
 \sup_{\Omega_u\cap Q_{M,r}(P)}|Yu|
 \leq
 Cr^\gamma
\end{equation}
for every regular free-boundary point \(P\) in a fixed smaller neighborhood
and every \(0<r\leq r_*\).
\end{proposition}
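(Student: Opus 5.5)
The plan is to apply Lemma~\ref{lem:divergence-perturbation} at every scale to the rescaled transport derivative, and then iterate with Lemma~\ref{lem:iteration}.

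Fix a regular free-boundary point \(P\) in the smaller neighborhood and a scale \(0<r\leq r_*\). Set \(g=Yu_{P,r}\) on the normalized domain \(D_2=\Omega_{P,r}\cap Q_{M,2}(0)\). By \eqref{eq:scaled-Y-identities}, \(g(Z)=Yu(P\circ\delta_rZ)\). Hence \(g\) is bounded, with \(\|g\|_{\mathrm L^\infty(D_2)}=\sup_{D_{2r}(P)}|Yu|\), where \(D_s(P)\) is as in \eqref{eq:truncated-positivity-domain}. By \eqref{eq:Yu-boundary-vanishing}, \(g\) extends continuously by zero to the graph portion. By \eqref{eq:scaled-Y-equation}, \(\K g=\operatorname{div}_x\mathbf F_{P,r}\) in \(D_2\) in the distributional sense, and \eqref{eq:scaled-Y-vector-bound} gives \(\|\mathbf F_{P,r}\|_{\mathrm L^\infty(D_2)}\leq Cr\). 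The bound \eqref{eq:bowman-solution-bounds} holds on \(Q_{M,3r_*}(P_0)\), so this estimate is uniform in \(P\) and in \(r\leq r_*\), after shrinking the neighborhood so that \(D_{2r}(P)\subset Q_{M,3r_*}(P_0)\). The normalized graph domains \(\Omega_{P,r}\) all have intrinsic Lipschitz constant at most \(M\). Therefore Lemma~\ref{lem:divergence-perturbation} applies with uniform constants, and choosing \(\theta\leq\theta_0\) gives
\[
 \sup_{D_{2\theta r}(P)}|Yu|
 \leq
 C\theta^{\alpha_0}\sup_{D_{2r}(P)}|Yu|+Cr.
\]
Choose \(\theta\) so small that \(C\theta^{\alpha_0}\leq1/2\). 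Writing \(\omega_P(s)=\sup_{D_{s}(P)}|Yu|\), which is non-decreasing in \(s\), this becomes \(\omega_P(\theta s)\leq\frac12\omega_P(s)+Cs\) for all \(0<s\leq2r_*\).

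Next apply Lemma~\ref{lem:iteration} with \(\mu=1\), \(\vartheta=1/2\), and any \(\gamma<\min\{1,\log 2/\log(1/\theta)\}\). For the initial bound at \(s=2r_*\), use \(\omega_P(2r_*)\leq C\), which follows from \(|Yu|\leq|\partial_tu|+|x||\nabla_yu|\) and \eqref{eq:bowman-solution-bounds}. This yields
\[
 \omega_P(\rho)\leq C(\rho/r_*)^\gamma(C+Cr_*)\leq C\rho^\gamma,
\]
uniformly in \(P\), which is \eqref{eq:Y-power-decay}.

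The main point to verify carefully is that the hypotheses of Lemma~\ref{lem:divergence-perturbation} hold literally. First, \(g\) must solve \(\K g=\operatorname{div}_x F\) distributionally in \(D_2\). The formal commutator computation \eqref{eq:Y-equation} requires third derivatives of \(u\), but interior hypoellipticity in \(\Omega_u\), where \(\K u=1\), makes \(u\) smooth there, so the identity holds classically inside \(D_2\). Since \(D_2\subset\Omega_{P,r}\) is open, no boundary terms arise. Second, \(g\) must be continuous up to the graph with zero trace. This is exactly the localized form of \cite[Proposition~6.7]{Bowman2025} discussed in Remark~\ref{rem:symmetric-localization}, uniformly on the symmetric neighborhood. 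I expect the only genuine subtlety to be the uniformity of the boundary H\"older estimate \eqref{eq:hhat-boundary-holder} across all rescaled domains. This is guaranteed because the intrinsic Lipschitz constant \(M\) is scale invariant and the boxes stay inside the coordinate patch for \(r\leq r_*\).
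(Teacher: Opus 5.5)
Your proposal is correct and follows essentially the same route as the paper: rescale at \(P\), apply Lemma~\ref{lem:divergence-perturbation} to \(g=Yu_{P,r}\) with \(F=\mathbf F_{P,r}=O(r)\), choose \(\theta\) so that the contraction factor \(C_1\theta^{\alpha_0}\) is less than one, and close with Lemma~\ref{lem:iteration}, using the local boundedness of \(Yu\) for the initial-scale bound. Your additional checks are consistent with what the paper uses implicitly, namely the classical validity of \eqref{eq:Y-equation} in \(\Omega_u\) by hypoellipticity and the scale-uniformity of the rescaled graph geometry.
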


\begin{proof}
For a regular free-boundary point \(P\), define
\begin{equation}
\label{eq:Y-scale-supremum}
 a_P(r)
 =
 \sup_{\Omega_u\cap Q_{M,r}(P)}|Yu|.
\end{equation}
The function \(a_P\) is non-decreasing.  After reducing \(r_*\), if
necessary, we may assume that all boxes at twice the working scale remain
in the regular coordinate neighborhood.

Fix \(0<r\leq r_*\), translate \(P\) to the origin, and apply the dilation
\(\delta_{1/r}\).  By \eqref{eq:scaled-Y-identities},
\begin{equation}
\label{eq:Y-supremum-rescaling}
 \|Yu_{P,r}\|_{\mathrm L^\infty(D_2)}
 =
 a_P(2r).
\end{equation}
Moreover, \eqref{eq:scaled-Y-vector-bound} gives
\begin{equation}
\label{eq:scaled-Y-source-bound}
 \|\mathbf F_{P,r}\|_{\mathrm L^\infty(D_2)}
 \leq
 Cr.
\end{equation}
Since \(Yu_{P,r}\) vanishes continuously on the rescaled graph, we may
apply Lemma~\ref{lem:divergence-perturbation} with $g=Yu_{P,r}$, $F=\mathbf F_{P,r}$. We obtain
\begin{equation}
\label{eq:Y-recursion-r}
 a_P(2\theta r)
 \leq
 C_1\theta^{\alpha_0}a_P(2r)
 +
 Cr
\end{equation}
for every \(0<\theta\leq\theta_0\).  Writing \(R=2r\) and absorbing the
factor \(1/2\) into \(C\), this becomes
\begin{equation}
\label{eq:Y-recursion-R}
 a_P(\theta R)
 \leq
 C_1\theta^{\alpha_0}a_P(R)
 +
 CR.
\end{equation}

Choose \(\theta\in(0,\theta_0]\) sufficiently small that
\begin{equation}
\label{eq:Y-contraction-factor}
 \vartheta_Y
 :=
 C_1\theta^{\alpha_0}
 <1.
\end{equation}
Then
\begin{equation}
\label{eq:Y-fixed-scale-recursion}
 a_P(\theta R)
 \leq
 \vartheta_Y a_P(R)
 +
 CR.
\end{equation}
Choose
\begin{equation}
\label{eq:Y-decay-exponent}
 0<\gamma
 <
 \min\bigl\{
 1,{(\log\vartheta_Y)}/{\log\theta}
 \bigr\}.
\end{equation}
Both logarithms are negative, so their ratio is positive.
Lemma~\ref{lem:iteration}, applied to
\eqref{eq:Y-fixed-scale-recursion}, yields
\begin{equation}
\label{eq:Y-iteration-conclusion}
 a_P(r)
 \leq
 Cr^\gamma,
 \qquad
 0<r\leq r_*.
\end{equation}
The initial-scale bound is uniform because $Yu=x\cdot\nabla_yu-\partial_tu$ is locally bounded by \eqref{eq:bowman-solution-bounds}.  All geometric and
analytic constants are uniform for \(P\) in the smaller regular
neighborhood.  This proves \eqref{eq:Y-power-decay}.
\end{proof}

\section{Improvement of flatness and proof of the main theorem}
\label{sec:flatness}

We now convert the analytic estimates obtained above into geometric
estimates for the free-boundary graph. For \(P=(x_P,y_P,t_P)\), the integral curve of \(Y\) through \(P\) is given by
\begin{equation}
\label{eq:Y-flow}
 \Phi_s(P)
 =
 P_s
 =
 (x_P,y_P+s x_P,t_P-s).
\end{equation}
Even if \(P\in\Gamma_u\), the point \(P_s\) need not belong to the free
boundary.  For any point \(Z=(x_Z,y_Z,t_Z)\), we write
\(x_m(Z):=(x_Z)_m\) for the \(m\)-th component of its diffusion coordinate.

\begin{lemma}
\label{lem:flow-flatness}
Let \(P\in\Gamma_u\) belong to a fixed smaller regular coordinate patch,
let \(0<r\leq r_*\), and suppose that \(|s|\leq r^2\).  Let
\(\widetilde P_s\) be the unique free-boundary graph point having the same
\((x',y,t)\)-coordinates as \(P_s=\Phi_s(P)\).  Then
\begin{equation}
\label{eq:flow-flatness}
 |h_s|
 \leq
 Cr^{1+\gamma/2},\qquad  h_s
 :=
 x_m(P_s)-x_m(\widetilde P_s).
\end{equation}
The constant is uniform for \(P\) in the indicated patch.
\end{lemma}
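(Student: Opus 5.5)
Since \(Yu=(x\cdot\nabla_y-\partial_t)u\) is the derivative of \(u\) along the flow, we have
\[
 \frac{\mathrm d}{\mathrm ds}u(P_s)=Yu(P_s).
\]
Integrating from \(0\) to \(s\) and using \(u(P)=0\) gives \(u(P_s)=\int_0^sYu(P_\tau)\,\d\tau\).

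Next I control the intrinsic distance from \(P_\tau\) to \(P\). The point \(P_\tau\) is \(P\circ(0,0,-\tau)\), since the group law gives \(y_P+0-(-\tau)x_P=y_P+\tau x_P\). Hence \(\dK(P,P_\tau)\leq C|\tau|^{1/2}\leq Cr\) for \(|\tau|\leq r^2\). After enlarging \(M\)-box constants, \(P_\tau\in Q_{M,Cr}(P)\). On the positivity set, Proposition~\ref{prop:Y-power-decay} gives \(|Yu(P_\tau)|\leq Cr^\gamma\). On the contact set \(Yu\) vanishes a.e.; the bound \eqref{eq:Y-power-decay} combined with the continuous extension by zero then holds along the whole segment. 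Therefore
\[
 0\leq u(P_s)\leq Cr^{2+\gamma}.
\]

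The sign of \(h_s\) is treated in two cases. If \(h_s\leq0\), then \(P_s\) lies in the contact region below the graph and \(u(P_s)=0\), which gives no information. In that case I reverse roles and flow backward from the graph point \(\widetilde P_s\): consider \(\Phi_{-s}(\widetilde P_s)\), whose \((x',y,t)\) coordinates agree with those of \(P\) up to an error in \(y\) of size \(|s||x_m(P_s)-x_m(\widetilde P_s)|\leq r^2|h_s|\). This is a lower-order perturbation which I absorb using the \(C^{0,1/2}_y\) graph estimate \eqref{eq:bowman-graph-regularity}, giving a contribution \(Cr|h_s|^{1/2}\). The argument then reduces to the previous case: the point \(\Phi_{-s}(\widetilde P_s)\) has height of order \(|h_s|\) above the graph over nearly the base point of \(P\), and \(u\) there is again \(\leq Cr^{2+\gamma}\) by integrating \(Yu\) from \(\widetilde P_s\in\Gamma_u\). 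I expect this symmetrization to be the fiddly step. Alternatively one can simply note the statement is symmetric under exchanging \(P\) and \(\widetilde P_s\) (both are graph points and \(|s|\leq r^2\)), with \(\widetilde P_s\) in the slightly enlarged patch.

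If \(h_s>0\), the point \(P_s\) sits at vertical height \(h_s\) above the graph point \(\widetilde P_s\). I apply the quadratic cone non-degeneracy \eqref{eq:quadratic-cone-lower} at \(\widetilde P_s\) with scale \(\rho\simeq h_s\) (admissible since \(h_s\leq Cr\) by the Lipschitz graph bound, after reducing \(r_*\)). In rescaled form this yields \(u_{\widetilde P_s,\rho}(\rho^{-1}h_se_m,0,0)\geq c(h_s/\rho)^2\). Equivalently, \(u(P_s)\geq ch_s^2\), because \(P_s=\widetilde P_s\circ(h_se_m,0,0)\): the group law with zero \(t\)-increment adds exactly \(h_se_m\) to \(x\). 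Combining the two bounds gives
\[
 ch_s^2\leq Cr^{2+\gamma},
\]
hence \(|h_s|\leq Cr^{1+\gamma/2}\). All constants are uniform, coming from Theorem~\ref{thm:bowman-input}(v) and Proposition~\ref{prop:Y-power-decay}.
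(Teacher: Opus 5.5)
Your proposal is correct and follows the paper's proof. You integrate \(Yu\), extended by zero on the contact set, along the \(Y\)-characteristic and use Proposition~\ref{prop:Y-power-decay} to get \(u(P_s)\le Cr^{2+\gamma}\); for \(h_s\ge0\) you apply quadratic cone non-degeneracy at \(\widetilde P_s\); for \(h_s<0\) you flow \(\widetilde P_s\) back by \(-s\), correct the \(y_m\)-shift \(sh_s\) with the \(C^{0,1/2}_y\) graph bound, and absorb the resulting \(Cr|h_s|^{1/2}\) term. The paper makes the last step explicit by splitting on the sign of the height \(k\) of \(\Phi_{-s}(\widetilde P_s)\) above the graph, and the case \(k\le0\), where your \emph{height of order \(|h_s|\)} claim fails, gives \(|h_s|\le Cr^2\) directly. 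Your closing \emph{alternative} via symmetry between \(P\) and \(\widetilde P_s\) is not exact, however: \(\Phi_{-s}(\widetilde P_s)\) lies over the base point \((x_P',y_P+sh_se_m,t_P)\), not over the base point of \(P\), so the \(y_m\)-correction from your main route cannot be skipped.
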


\begin{proof} The points \(P\) and \(\widetilde P_s\) lie on the free-boundary graph, while the
base coordinates of \(\widetilde P_s\) agree with those of
\(P_s=\Phi_s(P)\).  Since the \(Y\)-flow leaves the diffusion coordinate
unchanged, \(x_m(P_s)=x_m(P)\).  Moreover, the transport correction in
the boundary quasi-distance cancels the displacement
\(y(P_s)-y(P)=sx_P\).  Hence
\[
 D_\psi(\zeta_{\widetilde P_s},\zeta_P)
 \leq
 C|s|^{1/2}
 \leq Cr.
\]
The intrinsic Lipschitz character of the regular free-boundary graph,
supplied by
Theorem~\ref{thm:bowman-input}\textnormal{(ii)}, therefore gives
\begin{equation}
\label{eq:flow-height-preliminary}
 |h_s|
 =
 |x_m(P)-x_m(\widetilde P_s)|
 \leq Cr.
\end{equation}
After reducing \(r_*\), if necessary, we may assume that all points and
characteristic segments introduced below remain in a fixed enlargement
\(Q_{M,Cr}(P)\) of the working box.

The bounds for \(\partial_tu\) and \(\nabla_yu\) imply that \(u\) is
absolutely continuous along \(Y\)-characteristics and that
\begin{equation}
\label{eq:characteristic-fundamental-theorem}
 u(\Phi_s(Z))-u(Z)
 =
 \int_0^s Yu(\Phi_\tau(Z))\,\mathrm d\tau
\end{equation}
whenever the characteristic segment remains in the coordinate patch.  We
extend \(Yu\) by zero on the contact set.  This agrees almost everywhere
with the derivative of \(u\) along each characteristic, since an
absolutely continuous nonnegative function has derivative zero almost
everywhere on its zero set.  The extension is also compatible with the
continuous vanishing of \(Yu\) on the regular free boundary.

Since \(u(P)=0\), Proposition~\ref{prop:Y-power-decay} and
\eqref{eq:characteristic-fundamental-theorem} give
\begin{equation}
\label{eq:u-along-characteristic}
 |u(P_s)|
 \leq
 |s|
 \sup_{\Omega_u\cap Q_{M,Cr}(P)}|Yu|
 \leq
 Cr^{2+\gamma}.
\end{equation}

Suppose first that \(h_s\geq0\).  Since \(P_s\) and \(\widetilde P_s\) have the
same \((x',y,t)\)-coordinates, $P_s=\widetilde P_s+h_se_m$. Thus \(P_s\) lies above \(\widetilde P_s\) in the \(e_m\)-direction.  This direction
belongs to the fixed non-degeneracy cone, and
\eqref{eq:flow-height-preliminary} gives \(0\leq h_s\leq Cr\).
The scaled form of the quadratic cone lower bound
\eqref{eq:quadratic-cone-lower}, applied at a scale comparable to \(r\)
and centered at \(\widetilde P_s\), therefore gives
\begin{equation}
\label{eq:positive-flow-height}
 ch_s^2
 \leq
 u(\widetilde P_s+h_se_m)
 =
 u(P_s).
\end{equation}
Combining \eqref{eq:positive-flow-height} with
\eqref{eq:u-along-characteristic}, we obtain $h_s\leq Cr^{1+\gamma/2}$.

Suppose next that \(h_s<0\).  Flow \(\widetilde P_s\) for the time \(-s\)
and set
\begin{equation}
\label{eq:reverse-flow-point}
 P^\sharp
 =
 \Phi_{-s}(\widetilde P_s).
\end{equation}
Since $x(\widetilde P_s)=x(P)-h_se_m$, a direct computation using \eqref{eq:Y-flow} gives
\begin{equation}
\label{eq:reverse-flow-error}
 x_m(P^\sharp)-x_m(P)=-h_s,
 \qquad
 y(P^\sharp)-y(P)=s h_se_m,
 \qquad
 t(P^\sharp)=t(P).
\end{equation}
Let \(\widehat P\in\Gamma_u\) be the unique graph point with base
coordinates $(x'_P,y_P+s h_se_m,t_P)$. By \eqref{eq:flow-height-preliminary}, $|s h_s|\leq Cr^3$, so \(\widehat P\) remains in the same coordinate patch.  Since the base
coordinates of \(P\) and \(\widehat P\) differ only in the
\(y_m\)-variable, \eqref{eq:ym-half-holder} gives
\begin{equation}
\label{eq:reverse-flow-y-error}
 |x_m(\widehat P)-x_m(P)|
 \leq
 C|s h_s|^{1/2}
 \leq
 Cr|h_s|^{1/2}.
\end{equation}

Set
\begin{equation}
\label{eq:reverse-flow-height}
 k
 =
 x_m(P^\sharp)-x_m(\widehat P).
\end{equation}
The points \(P^\sharp\) and \(\widehat P\) have the same
\((x',y,t)\)-coordinates.  Moreover,
\eqref{eq:reverse-flow-error} gives
\begin{equation}
\label{eq:k-h-relation}
 k
 =
 -h_s-\bigl(x_m(\widehat P)-x_m(P)\bigr).
\end{equation}

If \(k\leq0\), then \eqref{eq:k-h-relation} and
\(-h_s=|h_s|\) imply
\[
 |h_s|
 \leq
 x_m(\widehat P)-x_m(P)
 \leq
 \bigl|x_m(\widehat P)-x_m(P)\bigr|.
\]
Using \eqref{eq:reverse-flow-y-error}, we obtain $|h_s|
 \leq
 Cr|h_s|^{1/2}$. Since \(h_s\neq0\) in the case under consideration, division by
\(|h_s|^{1/2}\) and squaring give
\begin{equation}
\label{eq:negative-height-contact-case}
 |h_s|\leq Cr^2.
\end{equation}

It remains to consider \(k>0\).  In this case $P^\sharp=\widehat P+ke_m$ lies above \(\widehat P\) in the \(e_m\)-direction and therefore belongs
to \(\Omega_u\).  Since \(\widetilde P_s\in\Gamma_u\), integration along the
characteristic joining \(\widetilde P_s\) to
\(P^\sharp=\Phi_{-s}(\widetilde P_s)\) gives
\begin{equation}
\label{eq:u-reverse-characteristic}
 |u(P^\sharp)|
 \leq
 |s|
 \sup_{\Omega_u\cap Q_{M,Cr}(P)}|Yu|
 \leq
 Cr^{2+\gamma}.
\end{equation}
Equations \eqref{eq:flow-height-preliminary} and
\eqref{eq:reverse-flow-y-error} also give \(0<k\leq Cr\).  The
displacement from \(\widehat P\) to \(P^\sharp\) is in the
\(e_m\)-direction, which lies in the fixed non-degeneracy cone.
Consequently, the scaled quadratic cone lower bound
\eqref{eq:quadratic-cone-lower}, applied at \(\widehat P\) at a scale
comparable to \(r\), gives
\begin{equation}
\label{eq:k-quadratic-lower}
 ck^2
 \leq
 u(P^\sharp).
\end{equation}
Combining \eqref{eq:u-reverse-characteristic} and
\eqref{eq:k-quadratic-lower}, we obtain
\begin{equation}
\label{eq:k-upper}
 k
 \leq
 Cr^{1+\gamma/2}.
\end{equation}

Since \(h_s<0\), equation \eqref{eq:k-h-relation} can be written as $|h_s|
 =
 k+x_m(\widehat P)-x_m(P)$. Therefore, \eqref{eq:reverse-flow-y-error} and
\eqref{eq:k-upper} yield
\begin{equation}
\label{eq:h-pre-absorption}
 |h_s|
 \leq
 Cr^{1+\gamma/2}
 +
 Cr|h_s|^{1/2}.
\end{equation}
Hence
\[
 Cr|h_s|^{1/2}
 \leq
 |h_s|/2+Cr^2,
\]
and after absorption,
\[
 |h_s|
 \leq
 Cr^{1+\gamma/2}+Cr^2.
\]
Since \(0<\gamma<1\) and \(r\leq1\), $r^2\leq r^{1+\gamma/2}$. Thus $|h_s|\leq Cr^{1+\gamma/2}$ also in the case \(k>0\).  The same conclusion follows from
\eqref{eq:negative-height-contact-case} when \(k\leq0\), and the
\(h_s\geq0\) case was established above.  This proves
\eqref{eq:flow-flatness}.
\end{proof}

Lemma~\ref{lem:flow-flatness} controls the variation of the graph in the
transport direction \(Y\), that is, under the coupled displacement $(y,t)\longmapsto(y+s x,t-s)$. This is the intrinsic analogue of variation in time and has homogeneous
degree two.  We also record the improvement for pure displacements in the
degree-three variables.  If two free-boundary points have the same
\((x',t)\)-coordinates and their \(y\)-coordinates differ by at most
\(Cr^3\), then \eqref{eq:bowman-graph-regularity} gives
\begin{equation}
\label{eq:y-flatness}
 |\Delta x_m|
 \leq
 C|\Delta y|^{1/2}
 \leq
 Cr^{3/2}.
\end{equation}
Thus the graph variation under a degree-three displacement gains a factor
\(r^{1/2}\) relative to the first-order scale.

We next state the geometric principle that combines the estimates in the
three relevant directions.  Variation in the tangential diffusion
variables \(x'\) is controlled by the H\"older continuity of the
non-degenerate normal, variation in the coupled transport direction \(Y\)
is controlled by Lemma~\ref{lem:flow-flatness}, and the remaining pure
variation in the degree-three variables \(y\) is controlled by
\eqref{eq:y-flatness}.  Together, these estimates yield a quantitative
first-order approximation of the free-boundary graph in the full
Kolmogorov geometry.

\begin{lemma}
\label{lem:geometric-upgrade}
Let \(S=\Delta_\psi\) be a non-characteristic intrinsic Lipschitz graph
in a fixed coordinate patch, and suppose that \(S\) is differentiable in
the Kolmogorov sense at every point.  Assume that, for some
\(\beta_0\in(0,1)\), \(c_0>0\), and \(C_*\geq1\), its inward
non-degenerate unit normal satisfies
\begin{equation}
\label{eq:geometric-normal-holder}
 |\nu(P)-\nu(\widetilde P)|
 \leq
 C_*(\dK(P,\widetilde P))^{\beta_0},
 \qquad
 P,\widetilde P\in S,
\end{equation}
and
\begin{equation}
\label{eq:geometric-normal-transversality}
 \nu_m(P)\geq c_0>0,
 \qquad
 P\in S.
\end{equation}
Suppose, in addition, that there exist
\(\beta_y,\beta_t\in(0,1)\) and \(r_0>0\) such that the following
estimates hold uniformly for every \(0<r\leq r_0\):

\begin{enumerate}[label=\textnormal{(\roman*)}]
\item if \(P,\widetilde P\in S\) have the same \((x',t)\)-coordinates and
\[
 |y_{\widetilde P}-y_P|\leq C_*r^3,
\]
then
\begin{equation}
\label{eq:geometric-y-hypothesis}
 |x_m(\widetilde P)-x_m(P)|
 \leq
 C_*r^{1+\beta_y};
\end{equation}

\item if \(P\in S\), \(|s|\leq C_*r^2\), and \(\widetilde P_s\in S\) is the unique
graph point having the same base coordinates as \(\Phi_s(P)\), then
\begin{equation}
\label{eq:geometric-flow-hypothesis}
 |x_m(\Phi_s(P))-x_m(\widetilde P_s)|
 \leq
 C_*r^{1+\beta_t}.
\end{equation}
\end{enumerate}
Then, with
\begin{equation}
\label{eq:beta-choice}
 \beta
 =
 \min\{\beta_0,\beta_y,\beta_t\},
\end{equation}
the graph is locally \(C_{\K}^{1,\beta}\).  More precisely, in every
smaller coordinate patch there exists \(C<\infty\) such that
\begin{equation}
\label{eq:geometric-normal-flatness}
 |\nu(P)\cdot(x_{\widetilde P}-x_P)|
 \leq
 C(\dK(P,\widetilde P))^{1+\beta}
\end{equation}
for all \(P,\widetilde P\in S\) in that patch.
\end{lemma}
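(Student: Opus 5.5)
The idea is to decompose the intrinsic increment $P^{-1}\circ\widetilde P$ into a short chain of elementary moves, one per direction. Each move is controlled either by normal continuity or by hypotheses (i)–(ii). Fix $P,\widetilde P\in S$ in the smaller patch and set $d=\dK(P,\widetilde P)$. If $d$ is larger than a fixed threshold, \eqref{eq:geometric-normal-flatness} is trivial from boundedness, so assume $d$ small. Since the graph is non-characteristic, it suffices to control the quantity
\[
\Delta:=x_m(\widetilde P)-x_m(P)+R(P)\cdot(x'_{\widetilde P}-x'_P),\qquad R=\nu'/\nu_m .
\]
Indeed $\nu(P)\cdot(x_{\widetilde P}-x_P)=\nu_m(P)\Delta$ and $\nu_m\le 1$.

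\textbf{The chain of moves.} Starting from $P$, I move in four steps.
\begin{enumerate}[label=\textnormal{(\alph*)}]
\item Flow along $Y$ for time $s=t_P-t_{\widetilde P}$, with $|s|\le d^2$, and project vertically to the graph point $P_1$. Hypothesis (ii) with $r\simeq d$ gives $|x_m(P_1)-x_m(P)|\le Cd^{1+\beta_t}$. The flow has already produced the transport displacement $sx_P$ in $y$.
\item Move within the time slice in $x'$ only, keeping $(y,t)$ fixed, to the graph point $P_2$ with $x'_{P_2}=x'_{\widetilde P}$.
\item Adjust $y$ to $y_{\widetilde P}$ at fixed $(x',t)$, landing at $\widetilde P$.
\end{enumerate}
The $y$-discrepancy remaining after (a) is $y_{\widetilde P}-y_P-sx_P$. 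The definition of $\dK$ via the group law (cf. \eqref{eq:boundary-quasidistance}) bounds this by $Cd^3$, up to an extra term $s(x_{P_1}-x_P)$. That extra term is of order $d^2\cdot d^{1+\beta_t}$, which is harmless. Hypothesis (i) then gives a contribution $Cd^{1+\beta_y}$ for step (c). One subtlety is that the $x'$-move in (b) does not change $y$, so the order of (b) and (c) is flexible. I would perform (c) before (b), so that (b) is a pure $x'$-displacement on a fixed $(y,t)$-slice.

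\textbf{The $x'$-move.} This is the step that uses the normal. On a fixed $(y,t)$ slice, the graph function $x'\mapsto\psi(x',y,t)$ is Lipschitz by intrinsic Lipschitz regularity. By Kolmogorov differentiability at each point, it is differentiable with gradient $a_Q=-R(Q)$; this is the identification in Proposition~\ref{prop:normal-holder}, which I would re-derive from differentiability together with the representation \eqref{eq:normal-from-intrinsic-gradient}. The slice points satisfy $\dK(Q,Q')\le C|x'_Q-x'_{Q'}|+C|x_m(Q)-x_m(Q')|$; since the slice has the same $y$ and $t$, only the $x$ terms survive. Along the segment from $P_1'$ to $P_2$ this is $\le Cd$. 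Hence, by the mean value theorem along the segment and \eqref{eq:geometric-normal-holder},
\[
|x_m(P_2)-x_m(P_1')+R(P)\cdot(x'_{P_2}-x'_{P_1'})|\le \sup_Q|R(Q)-R(P)|\,|x'_{P_2}-x'_{P_1'}|\le Cd^{\beta_0}\cdot d .
\]
Here $R$ is Hölder because $\nu_m\ge c_0$. To compare $R(Q)$ with $R(P)$, I need $\dK(Q,P)\le Cd$. This follows from the quasi-triangle inequality, because each intermediate point lies within $Cd$ of $P$; for $P_1$ this uses the bound from (a) plus Lipschitz control.

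\textbf{Summing and the main obstacle.} Summing the three contributions gives $|\Delta|\le C d^{1+\min\{\beta_0,\beta_y,\beta_t\}}$, which is \eqref{eq:geometric-normal-flatness}. Applying this with $\widetilde P$ ranging over the graph and translating to the origin yields \eqref{eq:C1beta-remainder} with $a_P=-R(P)$. Combined with \eqref{eq:geometric-normal-holder}, this shows the graph is locally $C^{1,\beta}_{\K}$. The main obstacle is the bookkeeping of the group-law cross terms in (a) and (c). After flowing, the $x$-coordinate of the projected point $P_1$ differs from $x_P$ in the $m$-th component. This alters the transport correction in $y$ by $s\,(x_m(P_1)-x_m(P))$, and the residual $y$-displacement must be checked to be $O(d^3)$ so that (i) applies at scale $r\simeq d$. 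I would first verify this using the explicit group law \eqref{eq:group-law} and the bound $|x_m(P_1)-x_m(P)|\le Cd$, which is enough since $|s|\,d\le d^3$. I would also check that all intermediate points stay in the patch, which requires shrinking the patch and the threshold on $d$.
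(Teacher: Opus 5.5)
Your proposal is correct and is essentially the paper's argument: the same three moves appear, namely an $x'$-move on a fixed $(y,t)$-slice controlled by $\nabla_{x'}\psi=-\nu'/\nu_m$ and the H\"older normal, a $Y$-flow move controlled by (ii), and a pure $y$-move controlled by (i). The only difference is the order. The paper does the $x'$-move on the slice of $P$ first and then flows from the intermediate point $A$, which produces the cross term $(t_{\widetilde P}-t_P)(x(A)-x(P))=O(r^3)$ that it must absorb. In your order, flowing from $P$ first, that term never arises: $y_{P_1}=y_P+sx_P$ exactly, and $y_{\widetilde P}-y_P-sx_P$ is precisely the $y$-component of $P^{-1}\circ\widetilde P$. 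So your worry about $s(x_m(P_1)-x_m(P))$ is unnecessary, though harmless. The only price is that your $x'$-segment lies on the slice of $\widetilde P$, so you need the quasi-triangle inequality to compare $R(Q)$ with $R(P)$, as you noted.
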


\begin{proof}
Fix a smaller coordinate patch whose closure is contained in the original
one, and let \(P,\widetilde P\in S\) belong to this smaller patch.  Set
\(r=\dK(P,\widetilde P)\).  If \(r\) is bounded below by a fixed positive
fraction of the patch radius, then \eqref{eq:geometric-normal-flatness}
follows, after increasing
\(C\), from the boundedness of the patch.  We may therefore assume that
\(r\) is sufficiently small that all points constructed below remain in
the original coordinate patch and that the hypotheses of the lemma apply.

Since the graph is differentiable in the Kolmogorov sense and
\(\nu_m\geq c_0>0\), its differential in the tangential diffusion
variables is
\begin{equation}
\label{eq:graph-slope-normal}
 \nabla_{x'}\psi(P)
 =
 -{\nu'(P)}/{\nu_m(P)}.
\end{equation}
The map \(\nu\longmapsto-{\nu'}/{\nu_m}\) is Lipschitz on the region
\(\{\nu_m\geq c_0\}\).  Hence \eqref{eq:geometric-normal-holder} implies
\begin{equation}
\label{eq:graph-slope-holder}
 |\nabla_{x'}\psi(P)-\nabla_{x'}\psi(\widetilde P)|
 \leq
 C(\dK(P,\widetilde P))^{\beta_0}.
\end{equation}

We connect \(P\) to \(\widetilde P\) through two intermediate graph points.  First,
change \(x'\) from \(x'_P\) to \(x'_{\widetilde P}\), keeping
\((y,t)=(y_P,t_P)\) fixed, and set
\[
 A
 =
 \bigl(x'_{\widetilde P},\psi(x'_{\widetilde P},y_P,t_P),y_P,t_P\bigr)
 \in S.
\]
Every graph point on this \(x'\)-segment lies at intrinsic distance at
most \(Cr\) from \(P\).  Integrating
\eqref{eq:graph-slope-normal} along the segment and using
\eqref{eq:graph-slope-holder}, we obtain
\begin{align}
 &\left|
 x_m(A)-x_m(P)
 +
 \frac{\nu'(P)}{\nu_m(P)}
 \cdot(x'_{\widetilde P}-x'_P)
 \right|\notag\\*
 &\quad\leq
 \int_0^1
 \left|
 \nabla_{x'}\psi\bigl(x'_P+\tau(x'_{\widetilde P}-x'_P),y_P,t_P\bigr)
 -
 \nabla_{x'}\psi(P)
 \right|
 |x'_{\widetilde P}-x'_P|\,\mathrm d\tau\leq
 Cr^{1+\beta_0}.
\label{eq:xprime-flatness}
\end{align}

Next, set $s=t_P-t_{\widetilde P}$. Since \(r=\dK(P,\widetilde P)\), we have \(|s|\leq Cr^2\).  Fix a
geometric constant \(\lambda\geq1\), large enough that
\(|s|\leq C_*(\lambda r)^2\), and reduce the admissible upper bound for
\(r\) so that \(\lambda r\leq r_0\).  Let \(B\in S\) be the unique graph
point having the same base coordinates as \(\Phi_s(A)\).
The \(Y\)-flow leaves the diffusion coordinate unchanged, so
\[
 x_m(\Phi_s(A))=x_m(A).
\]
Hypothesis \textnormal{(ii)}, applied at scale \(\lambda r\), therefore
gives
\begin{equation}
\label{eq:A-B-flow-error}
 |x_m(B)-x_m(A)|
 \leq
 Cr^{1+\beta_t}.
\end{equation}
Moreover,
\begin{equation}
\label{eq:B-coordinates}
 x'_B=x'_{\widetilde P},
 \qquad
 t_B=t_{\widetilde P},
 \qquad
 y_B=y_P+s\,x(A).
\end{equation}

Thus \(B\) and \(\widetilde P\) have the same \((x',t)\)-coordinates.  By the
definition of the symmetrized quasi-distance,
\begin{equation}
\label{eq:P-Q-transport-error}
 |y_{\widetilde P}-y_P+(t_{\widetilde P}-t_P)x(P)|
 \leq
 Cr^3.
\end{equation}
Since \(s=t_P-t_{\widetilde P}\), equation \eqref{eq:B-coordinates} gives
\begin{align*}
 y_{\widetilde P}-y_B
 &=
 y_{\widetilde P}-y_P+(t_{\widetilde P}-t_P)x(A)=
 y_{\widetilde P}-y_P+(t_{\widetilde P}-t_P)x(P)
 +(t_{\widetilde P}-t_P)\bigl(x(A)-x(P)\bigr).
\end{align*}
The intrinsic Lipschitz character of the graph and
\(|x'_A-x'_P|\leq r\) imply $|x(A)-x(P)|\leq Cr$. Combining this estimate with
\eqref{eq:P-Q-transport-error} and
\(|t_{\widetilde P}-t_P|\leq Cr^2\), we find
\begin{equation}
\label{eq:B-Q-y-error}
 |y_{\widetilde P}-y_B|
 \leq
 Cr^3.
\end{equation}
After increasing \(\lambda\), if necessary, and reducing the admissible
upper bound for \(r\) again, we have
\(|y_{\widetilde P}-y_B|\leq C_*(\lambda r)^3\).  Hypothesis
\textnormal{(i)}, applied at scale \(\lambda r\) to \(B\) and
\(\widetilde P\), now gives
\begin{equation}
\label{eq:B-Q-height-error}
 |x_m(\widetilde P)-x_m(B)|
 \leq
 Cr^{1+\beta_y}.
\end{equation}

Adding \eqref{eq:xprime-flatness},
\eqref{eq:A-B-flow-error}, and
\eqref{eq:B-Q-height-error}, and recalling
\eqref{eq:beta-choice}, we obtain
\begin{equation}
\label{eq:graph-first-order-remainder}
 \left|
 x_m(\widetilde P)-x_m(P)
 +
 \frac{\nu'(P)}{\nu_m(P)}
 \cdot(x'_{\widetilde P}-x'_P)
 \right|
 \leq
 Cr^{1+\beta}.
\end{equation}
Since
\[
 \nu(P)\cdot(x_{\widetilde P}-x_P)
 =
 \nu'(P)\cdot(x'_{\widetilde P}-x'_P)
 +
 \nu_m(P)\bigl(x_m(\widetilde P)-x_m(P)\bigr),
\]
multiplication of \eqref{eq:graph-first-order-remainder} by
\(\nu_m(P)\) gives
\[
 |\nu(P)\cdot(x_{\widetilde P}-x_P)|
 \leq
 Cr^{1+\beta}.
\]
This proves \eqref{eq:geometric-normal-flatness}.

In intrinsic coordinates centered at \(P\), the equivalence of \(\dK\)
with the centered graph gauge shows that
\eqref{eq:graph-first-order-remainder} gives
\eqref{eq:C1beta-remainder}, with
\[
 a_P
 =
 -{\nu'(P)}/{\nu_m(P)}.
\]
Furthermore, \eqref{eq:geometric-normal-holder} and
\eqref{eq:geometric-normal-transversality} imply
\[
 |a_P-a_{\widetilde P}|
 \leq
 C(\dK(P,\widetilde P))^{\beta_0}
 \leq
 C(\dK(P,\widetilde P))^\beta
\]
on the fixed bounded patch.  This is
\eqref{eq:C1beta-gradient}.  Hence \(S\) is locally
\(C_{\K}^{1,\beta}\), and the proof is complete.
\end{proof}

\begin{proof}[Proof of Theorem~\ref{thm:main}]
Theorem~\ref{thm:bowman-input} supplies a regular graph neighborhood,
unique half-space blow-ups, cone monotonicity, the required solution
estimates, and qualitative differentiability of the graph in the
Kolmogorov sense.  Estimate \eqref{eq:cylindrical-defect-decay} shows that
the graph is asymptotically cylindrical, so
Theorem~\ref{thm:BHI-input} applies uniformly below a fixed scale.

Proposition~\ref{prop:normal-holder} gives
\eqref{eq:normal-holder-main} with exponent \(\beta_0\).
Estimate \eqref{eq:y-flatness} verifies
\eqref{eq:geometric-y-hypothesis} with
\(\beta_y=1/2\).  Lemma~\ref{lem:flow-flatness}, after a harmless fixed
enlargement of the scale when \(|s|\leq Cr^2\), verifies
\eqref{eq:geometric-flow-hypothesis} with
\(\beta_t=\gamma/2\).  Applying
Lemma~\ref{lem:geometric-upgrade} with
\begin{equation}
\label{eq:final-beta}
 \beta
 =
 \min\bigl\{
 \beta_0,1/2,{\gamma}/{2}
 \bigr\}
 >0
\end{equation}
gives the quantitative flatness estimate
\eqref{eq:flatness-main} and shows that the regular free boundary is locally
\(C_{\K}^{1,\beta}\).

The identification of \(\nu(P)\) with the unique inward blow-up direction
was established in Proposition~\ref{prop:normal-holder}.  Finally, after
reducing \(r_*\) once more, \eqref{eq:normal-holder-main} ensures that $\nu(P)\cdot\nu(P_0)\geq 1/2$ throughout the stated neighborhood.  This completes the proof.
\end{proof}

\section{Further remarks}
\label{sec:remarks}

The proof separates three effects that become difficult to distinguish
if one formally differentiates the obstacle equation across the free
boundary.  First, the
geometry already available at regular points is sufficient for homogeneous
boundary comparison.  Indeed, the \(1/2\)-H\"older dependence of the graph
on \(y_m\) produces, after intrinsic rescaling, a critical
\(C^{0,1/3}_{y_m}\) cylindrical deviation of order \(r^{1/2}\).  Second, the
diffusion derivatives do not satisfy the homogeneous equation because of
the commutator terms in \eqref{eq:derivative-equations}.  After blow-up and
normalization, however, their sources are \(O(r)\), and this error is
summable in the quotient-oscillation iteration.  Third, the transport
derivative satisfies an equation with a divergence-form source whose
rescaled size is also \(O(r)\).  The resulting boundary decay of \(Yu\)
provides the power improvement in the intrinsic time direction.

It is natural to ask whether the commutator difficulty for the diffusion
derivatives could instead be avoided by using right-invariant vector
fields.  In coordinates centered at the group identity, these fields are
\begin{equation}
\label{eq:right-invariant-fields}
 \widehat X_i
 =
 \partial_{x_i}-t\partial_{y_i},
 \qquad i=1,\ldots,m,
\end{equation}
and a direct computation gives \([\K,\widehat X_i]=0\).  Since
\(\K u=1\) in \(\Omega_u\), it follows that
\(\K(\widehat X_i u)=0\) in \(\Omega_u\).  This commutation property
provides a useful formal parallel with the argument for step-two Carnot
groups in \cite{DanielliGarofaloPetrosyan2007}, but it does not supply the
boundary data required for the boundary Harnack argument used here.

To see the obstruction, fix
\(P_0=(x_0,y_0,t_0)\in\Gamma_u\) and use coordinates centered
intrinsically at \(P_0\).  The corresponding right-invariant field is
\begin{equation}
\label{eq:centered-right-invariant-field}
 \widehat X_i^{P_0}
 =
 \partial_{x_i}-(t-t_0)\partial_{y_i}.
\end{equation}
At \(P_0\), the coefficient \(t-t_0\) vanishes, and
Lemma~\ref{lem:gradient-vanishing} gives
\(\widehat X_i^{P_0}u(P_0)=u_{x_i}(P_0)=0\).
This vanishing is tied to the center \(P_0\).  If
\(\widetilde P\) is a neighboring free-boundary point and
\(Z\in\Omega_u\) approaches \(\widetilde P\), then
\begin{equation}
\label{eq:right-invariant-near-boundary}
 \widehat X_i^{P_0}u(Z)
 =
 u_{x_i}(Z)-(t_Z-t_0)u_{y_i}(Z).
\end{equation}
The first term tends to zero, but the available estimate
\(\nabla_yu\in\mathrm L^\infty_{\mathrm{loc}}\) does not provide a
continuous boundary trace for \(u_{y_i}\).  At most,
\eqref{eq:right-invariant-near-boundary} and the boundedness of
\(\nabla_yu\) give
\[
 \limsup_{\substack{Z\to\widetilde P,\,Z\in\Omega_u}}
 |\widehat X_i^{P_0}u(Z)|
 \leq
 C|t_{\widetilde P}-t_0|.
\]
This quantity is small when \(\widetilde P\) is close to \(P_0\), but it
need not vanish.  Homogeneous boundary Harnack requires exact continuous
vanishing on the relevant graph portion, not merely a small boundary
error.

One could recenter the right-invariant field at each boundary point
\(\widetilde P\), so that the coefficient of \(\partial_{y_i}\) vanishes
at \(\widetilde P\).  This, however, produces a different
\(\K\)-harmonic function at every boundary point and therefore does not
give a fixed family of positive \(\K\)-harmonic functions on a common
graph neighborhood.  In addition, cone monotonicity does not give
\(\widehat X_i^{P_0}u\) a fixed sign.  Thus the two properties needed for
the present boundary Harnack argument, positivity and continuous
vanishing on an entire graph portion, are unavailable for the
right-invariant derivatives.  If a continuous zero trace for
\(\nabla_yu\) were known, the boundary-vanishing obstruction would
disappear, but such a conclusion is not part of the regularity theory
used here.

By contrast, the left-invariant diffusion derivatives \(u_{x_i}\)
determine the non-degenerate normal, vanish continuously on the entire
regular graph, and have positive combinations \(q_j^\pm\) supplied by
cone monotonicity.  Their failure to be \(\K\)-harmonic is precisely the
scale-decaying deviation controlled by the
\(\K\)-harmonic-replacement argument.

The \(C_{\K}^{1,\beta}\) regularity established here raises the question
of further regularity of the regular graph.  In the constant-coefficient
setting considered in this paper, local smoothness is a natural
conjecture.  Such a conclusion would require a higher-order boundary
regularity argument beyond the quotient-oscillation estimates proved
here.  To indicate the issue, put
\[
 p=\partial_{x_m}u,
 \qquad
 R_j={\partial_{x_j}u}/{p},
 \qquad j=1,\ldots,m-1,
\]
in the portion of the positivity set under consideration, where \(p>0\).
The boundary traces satisfy
\[
 R_j\big|_{\Gamma_u}=-\partial_{x_j}\psi.
\]
Higher regularity of these quotients would therefore improve the
spatial regularity of the graph.  Their equations, however, retain the
commutator sources.  Indeed,
\[
 p\,\K R_j
 +2\nabla_xp\cdot\nabla_xR_j
 =
 -\partial_{y_j}u+R_j\partial_{y_m}u
 \qquad\text{in }\Omega_u.
\]
Since \(p\) vanishes at the free boundary, estimates for this equation
require a boundary theory adapted to its degeneracy.  The boundedness
of \(\nabla_yu\), which suffices for the \(O(r)\) perturbation argument,
does not by itself provide the higher-order estimates needed to
differentiate the quotient traces.

A possible approach would be to establish higher-order boundary
comparison estimates accommodating these sources, together with
improved boundary control of the transport derivatives.  The identities
\[
 \K(\partial_{y_i}u)=0,
 \qquad
 \K(\partial_tu)=0
 \qquad\text{in }\Omega_u
\]
suggest additional equations that could be used, but the required
boundary behavior of these derivatives would also have to be proved.
Moreover, improving the spatial normal alone would not establish full
higher intrinsic regularity.  The coupled time-transport direction
must be controlled as well.

A first concrete target is \(C_{\K}^{2,\gamma}\) regularity for some
\(\gamma\in(0,1)\).  In intrinsic coordinates centered at \(P\), this
would require a second-order approximation of the form
\[
 \left|
 \psi_P(x',y,t)
 -a_P\cdot x'
 -\frac12x'\cdot B_Px'
 -b_Pt
 \right|
 \leq
 C\bigl(|x'|+|y|^{1/3}+|t|^{1/2}\bigr)^{2+\gamma},
\]
uniformly on a smaller regular patch, with the corresponding
H\"older control and compatibility of the coefficients.  The time
variable enters this polynomial at homogeneous degree two, whereas
linear terms in \(y\) first enter at degree three.  Closing and
iterating such estimates would provide a route toward smoothness,
but the necessary higher-order boundary theory is not developed here.

The proof also suggests possible extensions to lower-order perturbations
and divergence-form Kolmogorov operators.  Such extensions would require
boundary comparison estimates that are uniform in the relevant
asymptotically cylindrical class, together with sufficient decay of the
rescaled derivative sources.  For variable leading coefficients, one
would additionally need a modulus of continuity ensuring that the errors
created by differentiating or freezing the coefficients are summable in
the analogue of \eqref{eq:inhomogeneous-contraction}.  These questions are
not pursued here.

Our main theorem concerns only the regular part of the free boundary.  The
structure and stratification of the singular set remain separate problems.
The reduction in \cite{Bowman2025} of the blow-up equation to the classical
parabolic obstacle problem suggests that methods from parabolic
singular-set theory should remain relevant.  Nevertheless, the transport
variables and the non-Euclidean group law introduce additional
compatibility conditions that do not arise in the analysis of regular
points.

\medskip
\noindent
\textbf{Declaration on the use of generative AI.}
The author used ChatGPT, developed by OpenAI, as an interactive tool
during the preparation and revision of this manuscript.  The tool was
used for language editing, organization, LaTeX formatting, and checks of
clarity and internal consistency.  The mathematical arguments, results,
and conclusions were developed and verified by the author, who also
checked the references and takes full responsibility for the content of
the manuscript.

\end{document}